\documentclass[reqno]{amsart}
\usepackage{graphicx}
\usepackage{amsmath}
\usepackage{amsfonts}
\usepackage{amssymb}
\usepackage{hyperref}
\setcounter{MaxMatrixCols}{30}

\providecommand{\U}[1]{\protect\rule{.1in}{.1in}}

\newtheorem{theorem}{Theorem}

\newtheorem{corollary}[theorem]{Corollary}

\newtheorem{example}[theorem]{Example}

\newtheorem{lemma}[theorem]{Lemma}

\newtheorem{proposition}[theorem]{Proposition}
\newtheorem{remark}[theorem]{Remark}

\setcounter{tocdepth}{2}
\numberwithin{theorem}{section}
\numberwithin{equation}{section}
\begin{document}
\title{Spectral results for free random variables}
\author{Brian C. Hall}
\address{Department of Mathematics, University of Notre Dame, Notre Dame, IN 46556, USA}
\email{bhall@nd.edu}
\author{Ching-Wei Ho}
\address{Institute of Mathematics, Academia Sinica, Taipei 10617, Taiwan}
\email{chwho@gate.sinica.edu.tw}

\subjclass{46L54, 60B20}
\keywords{Free probability, Brown measure, spectrum, log potential, free Brownian motions, random matrices, matrix Brownian motion}

\begin{abstract}
Let $(\mathcal{A},\mathrm{tr})$ be a von Neumann algebra with a faithful,
normal trace $\mathrm{tr}:\mathcal{A}\rightarrow\mathbb{C}.$ For each
$a\in\mathcal{A},$ define
\[
S(\lambda,\varepsilon)=\mathrm{tr}[\log((a-\lambda)^{\ast}(a-\lambda
)+\varepsilon)],\quad\lambda\in\mathbb{C},~\varepsilon>0,
\]
so that the limit as $\varepsilon\rightarrow0^{+}$ of $S$ is the log potential
of the Brown measure of $a.$ Suppose that for a fixed $\lambda\in\mathbb{C},$
the function%
\[
\varepsilon\mapsto\frac{\partial S}{\partial\varepsilon}(\lambda
,\varepsilon)=\mathrm{tr}[\log((a-\lambda)^{\ast}(a-\lambda)+\varepsilon
)^{-1}]
\]
admits a real analytic extension to a neighborhood of $0$ in $\mathbb{R}.$
Then we will show that $\lambda$ is outside the spectrum of $a.$

We will apply this result to several examples involving circular and elliptic
elements, as well as free multiplicative Brownian motions. In most cases, we
will show that the spectrum of the relevant element $a$ coincides with the
support of its Brown measure.

\end{abstract}
\maketitle
\tableofcontents

{\centering
\textit{Dedicated to Leonard Gross on the occasion of his 95th birthday}

}

\section{Introduction}

In this paper, we introduce a new characterization (Theorem \ref{main.thm}) of
the spectrum of an element in a tracial von Neumann algebra. We then apply
this result to several examples, such as (1) the sum of an arbitrary element
and a freely independent circular element, (2) more generally, the sum of an
arbitrary element and a freely independent elliptic element, and (3) the
product of a unitary element and a freely independent free multiplicative
Brownian motion. Under suitable assumptions, we establish equality of the
spectrum and the support of the Brown measure for these examples.

We now describe the origins of this line of research in the work of Leonard
Gross. Let $K$ be a connected compact Lie group, where the unitary group
$K=U(N)$ will be a key example. The paper \cite{Gross} of Gross proved
ergodicity for the action of the finite-energy loop group over $K$ on the
continuous loop group with the pinned Wiener measure. A by-product of Gross's
proof was a Fock-space or \textquotedblleft chaos\textquotedblright%
\ decomposition of the $L^{2}$ space over $K$ with respect to a heat kernel
measure $\rho_{t}$. This result then motivated the introduction by Hall
\cite{Hall1} of the Segal--Bargmann transform for $K.$ The transform is a
unitary map from $L^{2}(K,\rho_{t})$ onto a holomorphic $L^{2}$ space of
functions on the complexification $K_{\mathbb{C}}$ of $K.$ In the case
$K=U(N),$ we have $K_{\mathbb{C}}=GL(N;\mathbb{C}),$ the group of all $N\times
N$ invertible matrices over $\mathbb{C}.$

A paper of Gross and Malliavin \cite{GrossMalliavin} then gave a stochastic
construction of the Segal--Bargmann over $K,$ using the Brownian motions in
$K$ and $K_{\mathbb{C}}$ and methods from \cite{Gross}. Finally, Biane
\cite{BianeJFA} essentially took the construction of Gross and Malliavin for
the case $K=U(N)$ and $K_{\mathbb{C}}=GL(N;\mathbb{C})$ and took the limit as
$N\rightarrow\infty.$ Biane's work indicated a close relationship between the
free unitary Brownian motion (large-$N$ limit of Brownian motion in $U(N)$)
and the free multiplicative Brownian motion (large-$N$ limit of Brownian
motion in $GL(N;\mathbb{C})$).

Biane's work then provided the motivation and technical tools for work of
Hall--Kemp \cite{HK} computing the support of the Brown measure of the free
multiplicative Brownian motion and then work of Driver--Hall--Kemp \cite{DHK}
computing the Brown measure itself. (Here \textquotedblleft Brown
measure\textquotedblright\ \cite{Brown} is a von Neumann algebra construction
that mimics the notion of eigenvalue distribution in random matrix theory.)
The paper \cite{DHK} introduced a new PDE\ method for computing Brown
measures, which has then been used in subsequent works of Ho--Zhong \cite{HZ},
Hall--Ho \cite{HallHo1,HallHo2}, Demni--Hamdi \cite{DemniHamdi}, and
Eaknipitsari--Hall \cite{EH}. We also mention the work of Zhong \cite{Zhong},
which does not use the PDE\ method but obtains similar formulas using free
probability methods. The present paper also uses the PDE\ method in the
applications of our general result.

We now discuss the results of the current paper. The Brown measure is defined
for an element in a tracial von Neumann algebra $\mathcal{A},$ that is, a von
Neumann algebra with a faithful, normal trace. (See Section \ref{Brown.sec}
for details.) In general, the (closed) support of the Brown measure of $a$ is
\textit{contained in} the spectrum of $a.$ In many examples, the support of
the Brown measure \textit{equals} the spectrum and it is desirable to obtain
conditions that would guarantee this equality. In the present paper, we
introduce a new characterization (Theorem \ref{main.thm}) of the spectrum of
an element of a tracial von Neumann algebra, which is well suited for use with
the PDE\ method of \cite{DHK}. We then apply this result to get conditions on
the spectrum in various examples.

We now briefly summarize the applications we will make of Theorem
\ref{main.thm}. We consider a element of the form~$x+c$, where $c$ is circular
and $x$ is freely independent of $x,$ or more generally $x+g,$ where $g$ is
elliptic and freely independent of $x.$ Assume that the spectrum of $x$
coincides with the support of its Brown measure, which will happen, for
example, if $x$ is normal. Then the spectrum of $x+c$ coincides with the
support of its Brown measure (Corollary \ref{equalSupports.cor}), and the same
result holds more generally for $x+g$ (Theorem \ref{ellipticEquality.thm}).
Meanwhile, consider the general free multiplicative Brownian motion
$b_{s,\tau}$ introduced in \cite[Section 2.1]{HallHo2} and let $u$ be a
unitary element that is freely independent of $b_{s,\tau}.$ Then the spectrum
of $ub_{s,\tau}$ coincides with the support of its Brown measure (Theorem
\ref{multEquality.thm}). In particular, we determine the spectrum of the free
multiplicative Brownian motion $b_{t},$ in the original form introduced by
Biane \cite[Section 4.2.1]{BianeJFA}. The spectrum of $b_{t}$ is equal to the
support of its Brown measure, which is the closure of the domain $\Sigma_{t}$
in \cite{DHK}. See Figure \ref{t4.fig}.%

\begin{figure}[ptb]
\centering
\includegraphics[scale=0.6]{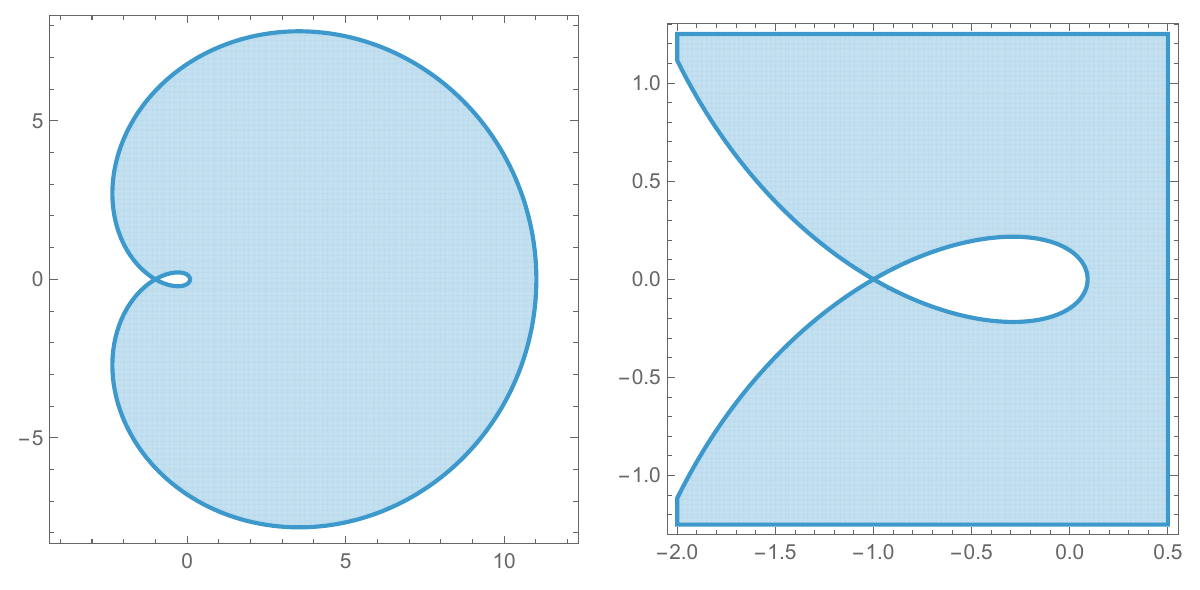}
\caption{The region $\Sigma_{t}$ from \cite{DHK} for $t=4$ (left), and a
detail thereof (right).}
\label{t4.fig}
\end{figure}

\section{A general result characterizing the spectrum}

\subsection{Brown measure\label{Brown.sec}}

Let $(\mathcal{A},\mathrm{tr})$ be a tracial von Neumann algebra, that is a
von Neumann algebra $\mathcal{A}$ together with a faithful, normal, tracial
state $\mathrm{tr}:\mathcal{A}\rightarrow\mathbb{C}.$ Here \textquotedblleft
faithful\textquotedblright\ means that $\mathrm{tr}[a^{\ast}a]>0$ for all
nonzero $a\in\mathcal{A},$ \textquotedblleft normal\textquotedblright\ means
that $\mathrm{tr}$ is continuous with respect to the weak operator topology,
and \textquotedblleft tracial\textquotedblright\ means that $\mathrm{tr}%
[ab]=\mathrm{tr}[ba]$ for all $a,b\in\mathcal{A}.$ For $a\in\mathcal{A},$ we
let $\left\vert a\right\vert $ be the non-negative square root of $a^{\ast}a.$

If $a\in\mathcal{A}$ is a normal operator, we can define the \textbf{law} (or
spectral distribution) $\mu_{a}$\ of $a$ using the spectral theorem as%
\begin{equation}
\mu_{a}(E)=\mathrm{tr}[\nu_{a}(E)] \label{lawDef}%
\end{equation}
for each Borel set $E,$ where $\nu_{a}$ is the projection-valued measure
associated to $a$ by the spectral theorem (e.g., \cite[Theorem 7.12]{QM}). The
measure $\mu_{a}$ is the unique compactly supported probability measure on
$\mathbb{C}$ satisfying%
\begin{equation}
\int_{\mathbb{C}}\lambda^{j}\bar{\lambda}^{k}~d\mu_{a}(\lambda)=\mathrm{tr}%
[a^{j}(a^{\ast})^{k}] \label{LawMoments}%
\end{equation}
for all non-negative integers $j$ and $k.$

Brown \cite{Brown}, extended the notion of law to elements that are not
necessarily normal, as follows. For $\lambda\in\mathbb{C},$ we define%
\begin{equation}
s(\lambda)=\mathrm{tr}[\log(\left\vert a-\lambda\right\vert ^{2})],
\label{sDef}%
\end{equation}
which may be computed in terms of the law of $\left\vert a-\lambda\right\vert
^{2}$ as%
\begin{equation}
s(\lambda)=\int_{0}^{\infty}\log(x)~d\mu_{\left\vert a-\lambda\right\vert
^{2}}(x). \label{sForm}%
\end{equation}
The value of $s$ is defined to be $-\infty$ if $\mu_{\left\vert a-\lambda
\right\vert ^{2}}$ has positive mass at 0; the value of $s$ may also be
$-\infty$ even if $\mu_{\left\vert a-\lambda\right\vert ^{2}}$ has no mass at 0.

Brown showed that $s(\lambda)$ is finite for Lebesgue-almost-every value of
$\lambda$ and is a subharmonic function of $\lambda.$ He then defined the
\textbf{Brown measure} $\mathrm{Br}_{a}$ as%
\begin{equation}
\mathrm{Br}_{a}=\frac{1}{4\pi}\Delta s, \label{smallSdef}%
\end{equation}
where $\Delta$ is the distributional Laplacian.

\begin{proposition}
\label{BrownProperties.prop}Properties of $\mathrm{Br}_{a}$ include:
\end{proposition}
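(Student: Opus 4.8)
The plan is to verify the listed properties one at a time, in each case reducing the assertion to an elementary computation with the (sub)harmonic function $s$ of \eqref{sDef} and, where smoothness is needed, with its regularization $S(\lambda,\varepsilon)=\mathrm{tr}[\log((a-\lambda)^{\ast}(a-\lambda)+\varepsilon)]$ introduced in the abstract. Subharmonicity of $s$ (due to Brown \cite{Brown}) already gives that $\mathrm{Br}_{a}=\frac{1}{4\pi}\Delta s$ is a non-negative Radon measure; the rest is bookkeeping on top of this fact.

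\emph{Total mass and support.} For $|\lambda|>\Vert a\Vert$ the operator $a-\lambda$ is invertible and $s$ is smooth near $\lambda$; expanding $\log(\left\vert a-\lambda\right\vert^{2})=2\log|\lambda|+\log\left\vert 1-a/\lambda\right\vert^{2}$ and taking the trace gives the asymptotics $s(\lambda)=2\log|\lambda|+O(1/|\lambda|)$ as $|\lambda|\to\infty$. Since $\mathrm{Br}_{a}$ is compactly supported, the Riesz decomposition writes $s=2\int_{\mathbb{C}}\log|\lambda-z|\,d\mathrm{Br}_{a}(z)+h$ with $h$ harmonic on all of $\mathbb{C}$; comparing growth at $\infty$ forces $h$ to be constant and $\mathrm{Br}_{a}(\mathbb{C})=1$. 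For the support statement, if $\lambda_{0}\notin\sigma(a)$ then $a-\lambda$ is invertible for all $\lambda$ in a neighborhood $U$ of $\lambda_{0}$; choosing a branch of the logarithm there we may write $s(\lambda)=\mathrm{tr}[\log(a-\lambda)]+\overline{\mathrm{tr}[\log(a-\lambda)]}$ on $U$, which is the sum of a holomorphic and an antiholomorphic function, hence harmonic, so $\Delta s=0$ on $U$ and $\mathrm{Br}_{a}$ assigns no mass there. Thus $\mathrm{Br}_{a}$ is a probability measure supported in $\sigma(a)$.

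\emph{Normal case, moments, and covariance.} If $a$ is normal, the spectral theorem gives $\left\vert a-\lambda\right\vert^{2}=\int_{\mathbb{C}}|z-\lambda|^{2}\,d\nu_{a}(z)$, so by \eqref{lawDef} and \eqref{sForm} we get $s(\lambda)=\int_{\mathbb{C}}\log|z-\lambda|^{2}\,d\mu_{a}(z)=2\int_{\mathbb{C}}\log|z-\lambda|\,d\mu_{a}(z)$; since $\frac{1}{2\pi}\Delta_{\lambda}\log|z-\lambda|=\delta_{z}$ distributionally and $\log|z-\lambda|$ is locally integrable uniformly in $z$, differentiating under the integral yields $\mathrm{Br}_{a}=\mu_{a}$. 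For the moment identity $\int_{\mathbb{C}}\lambda^{n}\,d\mathrm{Br}_{a}(\lambda)=\mathrm{tr}[a^{n}]$, one uses that on $\{|\lambda|>\Vert a\Vert\}$ one has $\partial_{\lambda}s(\lambda)=\mathrm{tr}[(\lambda-a)^{-1}]=\sum_{n\geq0}\lambda^{-n-1}\mathrm{tr}[a^{n}]$, while the Riesz decomposition above gives $\partial_{\lambda}s(\lambda)=\int_{\mathbb{C}}(\lambda-z)^{-1}\,d\mathrm{Br}_{a}(z)=\sum_{n\geq0}\lambda^{-n-1}\int_{\mathbb{C}}z^{n}\,d\mathrm{Br}_{a}(z)$; matching Laurent coefficients gives the claim. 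The remaining covariance properties follow from the obvious identities $s_{a+\lambda_{0}}(\lambda)=s_{a}(\lambda-\lambda_{0})$, $s_{\kappa a}(\lambda)=s_{a}(\lambda/\kappa)+2\log|\kappa|$, $s_{a^{\ast}}(\lambda)=s_{a}(\bar\lambda)$, and $s_{vav^{-1}}=s_{a}$ for invertible $v$ (using traciality), upon applying $\tfrac{1}{4\pi}\Delta$ to both sides.

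The main technical obstacle is the rigorous justification of differentiating $s$ — a priori only known to be subharmonic and locally integrable — under the trace and under the integral sign. This is exactly where the regularization is needed: one performs each of the above manipulations first with $S(\lambda,\varepsilon)$, which is real analytic in $\lambda$ and decreases continuously to $s(\lambda)$ as $\varepsilon\downarrow0$, and then passes to the limit using the continuity of the distributional Laplacian under monotone $L^{1}_{\mathrm{loc}}$ convergence of subharmonic functions (equivalently, weak convergence of the associated Riesz measures), as established in \cite{Brown}.
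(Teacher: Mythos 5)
The paper does not actually prove this proposition: it is stated as standard background material, attributed to Brown \cite{Brown} (with \cite{MingoSpeicher} as a secondary reference) and given without proof. So there is no paper proof to compare against; what follows is a critique of your argument on its own terms.

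Your argument for the support statement in Point 1 has a genuine gap. You write $s(\lambda)=\mathrm{tr}[\log(a-\lambda)]+\overline{\mathrm{tr}[\log(a-\lambda)]}$ near a point $\lambda_{0}\notin\sigma(a)$ after ``choosing a branch of the logarithm.'' Two things go wrong. First, a holomorphic branch of $\log$ on a neighborhood of $\sigma(a-\lambda)$ need not exist: the spectrum $\sigma(a)-\lambda$ may encircle the origin (for instance if $\sigma(a)$ is an annulus around $\lambda$), in which case no such branch is available. Second, even when a branch of $\log(a-\lambda)$ does exist, the identity $\mathrm{tr}\bigl[\log\bigl((a-\lambda)^{\ast}(a-\lambda)\bigr)\bigr]=2\operatorname{Re}\mathrm{tr}[\log(a-\lambda)]$ is not an elementary split of the logarithm, since $(a-\lambda)^{\ast}$ and $(a-\lambda)$ do not commute; it requires the multiplicativity of the Fuglede--Kadison determinant, which is itself a substantial theorem. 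The standard route is both simpler and gap-free: for $\lambda\notin\sigma(a)$ one differentiates directly,
\[
\frac{\partial s}{\partial\lambda}
=-\,\mathrm{tr}\Bigl[\bigl((a-\lambda)^{\ast}(a-\lambda)\bigr)^{-1}(a-\lambda)^{\ast}\Bigr]
=-\,\mathrm{tr}\bigl[(a-\lambda)^{-1}\bigr],
\]
which is manifestly holomorphic in $\lambda$ on $\mathbb{C}\setminus\sigma(a)$, so $\Delta s=4\,\partial_{\bar\lambda}\partial_{\lambda}s=0$ there. This is exactly the computation you already use for the moment identity, so replacing the branch-of-log argument by it actually shortens your proof. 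Beyond that gap: two of the listed items --- Point 2 (that $s$ is the log potential of $\mathrm{Br}_{a}$) and Point 5 (the finite-dimensional/empirical-eigenvalue case) --- are not explicitly addressed (Point 2 is implicit in your Riesz-decomposition step but deserves to be flagged as such, and Point 5 is a short independent computation using $\det$ that you omit), whereas the ``covariance properties'' you discuss at the end are not among the properties listed in the proposition. The remaining pieces --- total mass via the $2\log|\lambda|$ asymptotics and Riesz decomposition, the normal case, the moment formula, and the regularization-then-limit scheme for justifying the distributional manipulations --- are sound in outline.
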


\begin{enumerate}
\item \label{brownSpec.point}$\mathrm{Br}_{a}$ is a probability measure
supported on the spectrum of $a.$

\item The function $s$ is the log potential of $\mathrm{Br}_{a},$ that is, the
convolution of $\mathrm{Br}_{a}$ with the function $\log(\left\vert
z\right\vert ^{2}).$

\item $\mathrm{Br}_{a}$ agrees with $\mu_{a}$ if $a$ is normal.

\item We have%
\[
\int_{\mathbb{C}}\lambda^{j}~d\mathrm{Br}_{a}(\lambda)=\mathrm{tr}[a^{j}]
\]
for all non-negative integers $j.$

\item If $\mathcal{A}$ is the space of $N\times N$ matrices with complex
entries and $\mathrm{tr}$ is the normalized matrix trace, $\mathrm{tr}%
[a]=\frac{1}{N}\sum_{j=1}^{N}a_{jj},$ then $\mathrm{Br}_{a}$ is the
\textbf{empirical eigenvalue distribution} of $a,$ namely%
\[
\mathrm{Br}_{a}=\frac{1}{N}\sum_{j=1}^{N}\delta_{\lambda_{j}},
\]
where $\{\lambda_{1},\ldots,\lambda_{N}\}$ are the eigenvalues of $A.$
\end{enumerate}

We emphasize, however, that the Brown measure does not, in general, satisfy
(\ref{LawMoments}).

\subsection{The spectrum and the support of the Brown measure}

For any probability measure $\mu$ on $\mathbb{C},$ the \textbf{support} of
$\mu,$ denoted $\mathrm{supp}(\mu),$ is the smallest closed set of full
measure. In the case of the Brown measure of an element $a,$ we refer to
$\mathrm{supp}(\mathrm{Br}_{a})$ as the \textbf{Brown support} of $a.$ In
light of Point \ref{brownSpec.point} of Proposition \ref{BrownProperties.prop}%
, the Brown support of any $a$ is \textit{contained in} the spectrum of $a.$
Although, in many cases, the Brown support and the spectrum are actually
equal, this is not always the case. Thus, it is desirable to identify tools
that can allow us to prove equality of the Brown support and the spectrum in
certain cases.

The following example shows that the spectrum and Brown support can differ for
$R$-diagonal elements, that is, elements having the form of a Haar unitary
times a freely independent non-negative element.

\begin{example}
[Haagerup--Larsen]\label{hl.ex}Suppose $h$ is a non-negative self-adjoint
element such that the spectrum of $h$ contains 0 but $h$ has an $L^{2}$
inverse, meaning that
\[
\mathrm{tr}\left[  h^{-2}\right]  :=\int_{0}^{\infty}\frac{1}{\xi^{2}}%
~d\mu_{h}(\xi)<\infty.
\]
Let $x=uh,$ where $u$ is a Haar unitary that is freely independent of $h.$
Then the spectrum of $x$ is the disk%
\[
\sigma(x)=\left\{  \left.  \lambda\in\mathbb{C}\right\vert \left\vert
\lambda\right\vert ^{2}\leq\mathrm{tr}[h^{2}]\right\}  ,
\]
but the support of the Brown measure of $x$ is the annulus%
\[
\mathrm{supp}(\mathrm{Br}_{x})=\left\{  \lambda\in\mathbb{C}\left\vert
\frac{1}{\mathrm{tr}[h^{-2}]}\leq\left\vert \lambda\right\vert ^{2}%
\leq\mathrm{tr}[h^{2}]\right.  \right\}  .
\]

\end{example}

The preceding result is a combination of Theorem 4.4(i) and Proposition 4.6 in
\cite{HaagerupLarsen}.

The following elementary result says that for normal operators the spectrum
and the Brown support agree.

\begin{proposition}
\label{normalSupport.prop}If $a\in\mathcal{A}$ is normal, the closed support
of $\mathrm{Br}_{a}=\mu_{a}$ is equal to the spectrum of $a.$
\end{proposition}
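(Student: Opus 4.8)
The plan is to reduce everything to the spectral theorem together with the faithfulness of $\mathrm{tr}$. By Point~3 of Proposition~\ref{BrownProperties.prop}, $\mathrm{Br}_{a}=\mu_{a}$, so it suffices to prove $\mathrm{supp}(\mu_{a})=\sigma(a)$. One inclusion, $\mathrm{supp}(\mu_{a})\subseteq\sigma(a)$, is already Point~1 of that proposition, but the method below will in fact give both inclusions at once.

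First I would apply the spectral theorem to the bounded normal operator $a$, obtaining a projection-valued measure $\nu_{a}$ on the Borel subsets of $\mathbb{C}$ with $a=\int_{\mathbb{C}}z\,d\nu_{a}(z)$; because a von Neumann algebra is closed under the Borel functional calculus of its normal elements, each $\nu_{a}(E)$ lies in $\mathcal{A}$. I would then recall the standard description of the spectrum via this measure: $\sigma(a)$ is the smallest closed set whose complement is $\nu_{a}$-null, equivalently $\lambda\in\sigma(a)$ if and only if $\nu_{a}(U)\neq 0$ for every open neighborhood $U$ of $\lambda$. Next I would observe that, since each $\nu_{a}(E)$ is a projection, we have $\nu_{a}(E)=\nu_{a}(E)^{\ast}\nu_{a}(E)$, so faithfulness of $\mathrm{tr}$ and the definition~(\ref{lawDef}) of $\mu_{a}$ yield the equivalence $\nu_{a}(E)=0\iff\mathrm{tr}[\nu_{a}(E)]=0\iff\mu_{a}(E)=0$ for every Borel set $E$.

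Applying this equivalence with $E$ ranging over open balls shows that a point has an open neighborhood that is $\nu_{a}$-null precisely when it has one that is $\mu_{a}$-null; hence $\mathbb{C}\setminus\sigma(a)=\mathbb{C}\setminus\mathrm{supp}(\mu_{a})$ and therefore $\sigma(a)=\mathrm{supp}(\mu_{a})=\mathrm{supp}(\mathrm{Br}_{a})$. The argument is routine and I anticipate no genuine obstacle; the only step that carries any weight is the use of faithfulness, which is exactly what prevents $\nu_{a}(U)$ from being a nonzero projection of trace zero and so rules out spectrum lying outside the Brown support.
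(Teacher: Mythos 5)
Your proof is correct and follows essentially the same approach as the paper: the crux in both is that faithfulness of the trace forces $\mu_{a}$ and the spectral measure $\nu_{a}$ to have the same null sets, hence the same closed support. The only difference is that you cite as standard the identity $\sigma(a)=\mathrm{supp}(\nu_{a})$, whereas the paper spells out the nontrivial inclusion $\sigma(a)\subseteq\mathrm{supp}(\nu_{a})$ by explicitly constructing a bounded inverse of $a-\lambda$ via the Borel functional calculus when some neighborhood of $\lambda$ is $\nu_{a}$-null.
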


\begin{proof}
If $P$ is a nonzero self-adjoint projection, then $\mathrm{tr}[P]=\mathrm{tr}%
[P^{2}]>0,$ by the faithfulness of the trace. It follows that the law $\mu
_{a}$ of $a$ has the same sets of measure zero as the projection-valued
measure $\nu_{a}$ associated to $a$ by the spectral theorem. Thus, the closed
support of $\mu_{a}$ is the same as the closed support of $\nu_{a}.$ Part of
the spectral theorem states that $\nu_{a}$ is supported on the spectrum of $a$
(which is a closed set), showing that the closed support of $\nu_{a}$ is
contained in the spectrum.

We now show that the closed support of $\nu_{a}$ contains the spectrum of $a.$
If not, there would be a point $\lambda$ that is in $\sigma(a)$ but outside
the closed support of $\nu_{a},$ which means that $\nu_{a}(D_{\lambda
}(\varepsilon))=0$ for some open disk centered at $\lambda$ with radius
$\varepsilon.$ Then consider the bounded function $f$ given by
\[
f(x)=\left\{
\begin{array}
[c]{cc}%
0 & x\in D_{\lambda}(\varepsilon)\\
\frac{1}{x-\lambda} & x\notin D_{\lambda}(\varepsilon)
\end{array}
\right.  .
\]
Then $f(a)$ is a bounded normal operator. Meanwhile, let $g(x)=x-\lambda,$ so
that $g(a)=a-\lambda I.$ Then by the multiplicativity of the functional
calculus for bounded measurable functions (e.g. Theorem 7.7 in \cite{QM}), we
have%
\[
f(a)g(a)=g(a)f(a)=(fg)(a).
\]
But the function $fg$ equals $1$ for $\nu_{a}$-almost-every $x,$ so
$(fg)(a)=I.$ This shows that $f(a)$ is a bounded inverse of $g(a)=a-\lambda.$
\end{proof}

\subsection{The regularized log potential and its derivative}

It is convenient to introduce the \textbf{regularized log potential} $S$ of
$a\in\mathcal{A}$ as%
\begin{equation}
S(\lambda,\varepsilon)=\mathrm{tr}[\log(\left\vert a-\lambda\right\vert
^{2}+\varepsilon)],\quad\lambda\in\mathbb{C},~\varepsilon>0. \label{Sdef}%
\end{equation}
(See Section 11.5 of the monograph \cite{MingoSpeicher} of Mingo and
Speicher.) Then $S$ is a $C^{\infty}$ function of both $\lambda$ and
$\varepsilon$ and is subharmonic \cite[Equation (11.8)]{MingoSpeicher} as a
function of $\lambda$ for fixed $\varepsilon.$ For $\lambda\in\mathbb{C}$
fixed, $S(\lambda,\varepsilon)$ decreases as $\varepsilon$ decreases, so that
the limit as $\varepsilon\rightarrow0^{+}$ exists, possibly equal to
$-\infty.$ After separating the log function into its positive and negative
parts and applying monotone convergence, we find that%
\[
s(\lambda)=\lim_{\varepsilon\rightarrow0^{+}}S(\lambda,\varepsilon)
\]
for all $\lambda\in\mathbb{C}.$

\begin{remark}
Sometimes, a different convention is used, in which $\varepsilon$ is replaced
by $\varepsilon^{2}$ on the right-hand side of (\ref{sDef}). In the main
results below, it is extremely important to distinguish between the
\textquotedblleft$\varepsilon$ regularization\textquotedblright\ and the
\textquotedblleft$\varepsilon^{2}$ regularization.\textquotedblright\ See
Remark \ref{epsilonSquared.rem}.
\end{remark}

The function $S$ is a regularization of the log potential $s$ of
$\mathrm{Br}_{a},$ in the sense that $S$ is a smooth function that
approximates $s$ for small $\varepsilon.$ It is important to note, however,
that $S$ cannot be computed from $s$; to compute $S,$ one needs information
about the element $a$ that cannot (in general) be computed just from the
function $s.$ (Thus, for example, $S$ cannot be computed as the convolution of
$s$ with some mollifier function.) In particular, $S$ is not determined by
$\mathrm{Br}_{a}$; if it were, it would also be determined by $s$, which is
the log potential of $\mathrm{Br}_{a}$.

Although the function $S$ was introduced as a convenient regularization of the
log potential $s$ of $\mathrm{Br}_{a},$ it plays a more fundamental role in
certain Brown measure calculations. Specifically, Driver--Hall--Kemp
\cite{DHK} consider the log potential $S(t,\lambda,\varepsilon)$ of Biane's
free multiplicative Brownian motion $b_{t}.$ Then \cite{DHK} shows that $S$
satisfies a PDE in which $\varepsilon$ appears as one of the variables. (See
Section \ref{ubt.sec}.) One cannot simply set $\varepsilon=0$ in the PDE
because derivatives with respect to $\varepsilon$ appear. Further works using
a PDE\ for the regularized log potential include those of Ho--Zhong \cite{HZ},
Hall--Ho \cite{HallHo1,HallHo2}, Demni--Hamdi \cite{DemniHamdi}, and
Eaknipitsari--Hall \cite{EH}. See also the first author's expository
discussion of the PDE\ method \cite{PDE}.

We consider also $\partial S/\partial\varepsilon.$ We use the general formula
for the derivative of the trace of a logarithm,%
\[
\frac{d}{du}\mathrm{tr}[\log(a(u))]=\mathrm{tr}[a(u)^{-1}],
\]
whenever $a(\cdot)$ is a differentiable function with values in the space of
positive elements of $\mathcal{A}.$ (See \cite[Lemma 1.1]{Brown} or
\cite[Equation (25)]{PDE}.) Using this result, we compute that%
\begin{equation}
\frac{\partial S}{\partial\varepsilon}=\mathrm{tr}[(\left\vert a-\lambda
\right\vert ^{2}+\varepsilon)^{-1}]=\int_{0}^{\infty}\frac{1}{\xi+\varepsilon
}~d\mu_{\left\vert a-\lambda\right\vert ^{2}}(\xi).\label{regResolvent}%
\end{equation}
We then consider the behavior of $\partial S/\partial\varepsilon$ when
$\varepsilon$ tends to zero and try to understand what it tells us about the
Brown measure. We first note that,%
\begin{equation}
\lim_{\varepsilon\rightarrow0^{+}}\frac{\partial S}{\partial\varepsilon
}=\mathrm{tr}[\left\vert a-\lambda\right\vert ^{-2}]\label{limdSdEpsilon1}%
\end{equation}
where we \textit{define} the right-hand side of (\ref{limdSdEpsilon1}) as the limit
$\varepsilon\rightarrow0^{+}$ of the last expression in (\ref{regResolvent}),
namely (by monotone convergence)%
\begin{equation}
\mathrm{tr}[\left\vert a-\lambda\right\vert ^{-2}]=\int_{0}^{\infty}\frac
{1}{\xi}~d\mu_{\left\vert a-\lambda\right\vert ^{2}}(\xi
).\label{limdSdEpsilon2}%
\end{equation}

The quantity $\frac{\partial S}{\partial\varepsilon}(\lambda,\varepsilon)$
will typically blow up as $\varepsilon\rightarrow0^{+},$ when $\lambda$ is in
the support of the Brown measure of $a.$ We may consider for example,
\cite{DHK}, which computes the Brown measure of the free multiplicative
Brownian motion $b_{t}.$ In that setting, $\frac{\partial S}{\partial
\varepsilon}(\lambda,\varepsilon)$ blows up like $1/\sqrt{\varepsilon}$ for
$\lambda$ the interior of the support of the Brown measure of $b_{t},$ by
Proposition (5.6) and Equation (5.12) in \cite{DHK}.

We emphasize that (\ref{limdSdEpsilon2}) can have a finite value even if
$a-\lambda$ fails to have a \textit{bounded} inverse; it is enough for
$a-\lambda$ to have a inverse in the noncommutative $L^{2}$ space of operators
$b$ with $\mathrm{tr}[b^{\ast}b]<\infty.$ Thus, the condition that
$\frac{\partial S}{\partial\varepsilon}(\lambda,\varepsilon)$ has a finite
limit as $\varepsilon\rightarrow0^{+}$ does not, by itself, guarantee that
$\lambda$ is outside the spectrum of $a.$ On the other hand, the following
result of Zhong says that failure of $\partial S/\partial\varepsilon$ to blow
up near $\lambda_{0}$ indicates that $\lambda_{0}$ is outside the Brown
support of $a.$

\begin{theorem}
[Zhong]If
\[
\lim_{\varepsilon\rightarrow0^{+}}\frac{\partial S}{\partial\varepsilon
}(\lambda,\varepsilon)=\mathrm{tr}\left[  \left\vert a-\lambda\right\vert
^{-2}\right]
\]
is finite for all $\lambda$ in some neighborhood of $\lambda_{0},$ then
$\lambda_{0}$ is outside the support of the Brown measure of $a.$
\end{theorem}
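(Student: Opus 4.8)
The plan is to show that the hypothesis forces $\mathrm{Br}_a$ to assign no mass to a neighborhood of $\lambda_0$; since $\mathrm{supp}(\mathrm{Br}_a)$ is closed, this is exactly the statement that $\lambda_0$ is outside the Brown support. Because $S(\cdot,\varepsilon)$ decreases to $s$ as $\varepsilon\to 0^+$ and both functions are subharmonic in $\lambda$, the measures $\tfrac{1}{4\pi}\Delta_\lambda S(\cdot,\varepsilon)\,d^2\lambda$ converge vaguely to $\tfrac{1}{4\pi}\Delta s=\mathrm{Br}_a$, so everything comes down to understanding $\Delta_\lambda S(\lambda,\varepsilon)$. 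The first step is the identity
\[
\Delta_\lambda S(\lambda,\varepsilon)=4\varepsilon\,\mathrm{tr}\!\left[(|a-\lambda|^2+\varepsilon)^{-1}\,(|(a-\lambda)^*|^2+\varepsilon)^{-1}\right],
\]
obtained by differentiating $S=\mathrm{tr}[\log(|a-\lambda|^2+\varepsilon)]$ twice, using $\Delta=4\,\partial_\lambda\partial_{\bar\lambda}$, the formula for the derivative of $\mathrm{tr}[\log(\cdot)]$ recalled before (\ref{regResolvent}), the resolvent identity $(a-\lambda)(|a-\lambda|^2+\varepsilon)^{-1}=(|(a-\lambda)^*|^2+\varepsilon)^{-1}(a-\lambda)$, and traciality; in particular this re-confirms subharmonicity.

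Next I bound this quantity. Since $\mathrm{tr}$ is a trace, $|a-\lambda|^2$ and $|(a-\lambda)^*|^2$ have the same law $\mu_{|a-\lambda|^2}$ (all of their moments agree), so the Cauchy--Schwarz inequality for the trace gives
\[
\mathrm{tr}\!\left[(|a-\lambda|^2+\varepsilon)^{-1}(|(a-\lambda)^*|^2+\varepsilon)^{-1}\right]\le \mathrm{tr}\!\left[(|a-\lambda|^2+\varepsilon)^{-2}\right]=\int_0^\infty\frac{d\mu_{|a-\lambda|^2}(\xi)}{(\xi+\varepsilon)^2},
\]
hence $0\le \Delta_\lambda S(\lambda,\varepsilon)\le 4\varepsilon\int_0^\infty (\xi+\varepsilon)^{-2}\,d\mu_{|a-\lambda|^2}(\xi)$. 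Now fix any $\lambda$ with $\mathrm{tr}[|a-\lambda|^{-2}]=\int_0^\infty \xi^{-1}\,d\mu_{|a-\lambda|^2}(\xi)<\infty$; then $\mu_{|a-\lambda|^2}$ has no atom at $0$, and since $\varepsilon/(\xi+\varepsilon)^2\le 1/\xi$ with $\varepsilon/(\xi+\varepsilon)^2\to 0$ for each $\xi>0$, dominated convergence yields $\Delta_\lambda S(\lambda,\varepsilon)\to 0$ as $\varepsilon\to 0^+$ for every such $\lambda$ — in particular for every $\lambda$ in the hypothesized neighborhood $U$ of $\lambda_0$.

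To finish, take $\varphi\in C_c^\infty(U)$ with $\varphi\ge 0$. Integrating by parts and using $S(\cdot,\varepsilon)\downarrow s$ in $L^1_{\mathrm{loc}}$ (by dominated convergence, as $S(\cdot,\varepsilon)$ lies between $s\in L^1_{\mathrm{loc}}$ and any fixed $S(\cdot,\varepsilon_0)$),
\[
4\pi\int_{\mathbb C}\varphi\,d\mathrm{Br}_a=\int_{\mathbb C}s\,\Delta\varphi\,d^2\lambda=\lim_{\varepsilon\to 0^+}\int_{\mathbb C}S(\lambda,\varepsilon)\,\Delta\varphi(\lambda)\,d^2\lambda=\lim_{\varepsilon\to 0^+}\int_{\mathbb C}\varphi(\lambda)\,\Delta_\lambda S(\lambda,\varepsilon)\,d^2\lambda .
\]
If one may pass the limit inside the last integral, the pointwise vanishing above makes the right-hand side $0$; as $\varphi\ge0$ is arbitrary this forces $\mathrm{Br}_a(U)=0$, which is the desired conclusion.

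The main obstacle is precisely this last interchange of limit and integral: pointwise convergence $\Delta_\lambda S(\lambda,\varepsilon)\to 0$ on $U$ is not by itself sufficient, and one needs a form of domination or uniform integrability for the family $\{\Delta_\lambda S(\cdot,\varepsilon)\}_{\varepsilon}$ near $\lambda_0$ — equivalently, control on $\lambda\mapsto\mathrm{tr}[|a-\lambda|^{-2}]$ that is locally uniform, not merely finite at each point. This is exactly where one must use that finiteness is assumed on an entire neighborhood rather than only at $\lambda_0$: the function $\lambda\mapsto\mathrm{tr}[|a-\lambda|^{-2}]$ is lower semicontinuous (a supremum over $\varepsilon$ of the continuous functions $\partial S/\partial\varepsilon(\cdot,\varepsilon)$) and finite throughout $U$, and one wants to extract from this, via a semicontinuity or uniform-integrability argument, that the regularized Brown mass $\tfrac{1}{4\pi}\Delta_\lambda S(\cdot,\varepsilon)\,d^2\lambda$ does not concentrate near $\lambda_0$ as $\varepsilon\to0^+$ — after which bounded convergence applies on a small ball about $\lambda_0$ (chosen with $\mathrm{Br}_a$-null boundary) and one concludes as above. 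Establishing this non-concentration is the step I would expect to require the most care.
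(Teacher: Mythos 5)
The paper does not actually prove this statement; it is cited verbatim from \cite[Theorem 4.6]{Zhong}. So I can only compare your attempt against what that cited argument must accomplish, and against the consequences of Zhong's Theorem 4.6 and Lemma 4.5 that the present paper invokes elsewhere.

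The computational part of your sketch is correct: the identity $\Delta_\lambda S(\lambda,\varepsilon)=4\varepsilon\,\mathrm{tr}[(|a-\lambda|^2+\varepsilon)^{-1}(|(a-\lambda)^*|^2+\varepsilon)^{-1}]$, the Cauchy--Schwarz bound $\Delta_\lambda S(\lambda,\varepsilon)\le 4\varepsilon\int_0^\infty(\xi+\varepsilon)^{-2}\,d\mu_{|a-\lambda|^2}(\xi)$, and the conclusion that $\Delta_\lambda S(\lambda,\varepsilon)\to 0$ at each $\lambda$ with $\mathrm{tr}[|a-\lambda|^{-2}]<\infty$ are all fine, and you have correctly reduced the problem to interchanging $\lim_{\varepsilon\to 0^+}$ with $\int\varphi\,\Delta_\lambda S\,d^2\lambda$. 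But the gap you flag at the end is a genuine one, and the route you gesture at does not close it. Your natural dominating function is $g(\lambda):=\mathrm{tr}[|a-\lambda|^{-2}]$ (indeed $\varepsilon/(\xi+\varepsilon)^2\le 1/(4\xi)$, so $\Delta_\lambda S(\cdot,\varepsilon)\le g$ uniformly in $\varepsilon$), and you hope that lower semicontinuity plus pointwise finiteness of $g$ on $U$ will force non-concentration. That is false in general: a nonnegative, lower semicontinuous, everywhere-finite function can fail to be locally integrable on every open set. For example, take disjoint open intervals $I_n$ accumulating at a point, with $|I_n|=2^{-n^2}$, and set $g=\sum_n 2^{n^2}\mathbf{1}_{I_n}$; this is lower semicontinuous, finite-valued, and has infinite integral near the accumulation point. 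So lower semicontinuity and finiteness give you nothing toward uniform integrability of $\{\Delta_\lambda S(\cdot,\varepsilon)\}$, and dominated convergence cannot be invoked as you propose.

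The argument in the literature is in fact shorter and sidesteps this issue completely, but it needs a different ingredient that your sketch does not attempt. Zhong's Theorem 4.6 (as quoted in the proof of Lemma \ref{InjPhi.lem} in this paper) contains the potential-theoretic inequality
\[
\int_{\mathbb C}\frac{1}{\left\vert\lambda-z\right\vert^2}\,d\mathrm{Br}_a(z)\ \le\ \mathrm{tr}\!\left[\left\vert a-\lambda\right\vert^{-2}\right].
\]
Combine this with Zhong's Lemma 4.5 (cited in the proof of Lemma \ref{specContain.lem} here), which says that for \emph{any} compactly supported Borel measure $\mu$ on $\mathbb C$ the $2$-Riesz potential $\int\left\vert\lambda-z\right\vert^{-2}\,d\mu(z)$ equals $+\infty$ for $\mu$-a.e.\ $\lambda$. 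If $\mathrm{tr}[|a-\lambda|^{-2}]$ is finite for every $\lambda$ in an open $U$, then by the displayed inequality $\int\left\vert\lambda-z\right\vert^{-2}\,d\mathrm{Br}_a(z)$ is finite on all of $U$, so by Lemma 4.5 we must have $\mathrm{Br}_a(U)=0$; since $U$ is open this puts $\lambda_0$ outside $\mathrm{supp}(\mathrm{Br}_a)$. This uses the neighborhood hypothesis exactly as you do, but requires no control of $\Delta_\lambda S$ at all. The actual work is hidden in proving the displayed inequality (essentially a Fatou-type statement comparing the limiting Riesz potential of the regularized densities $\tfrac{1}{4\pi}\Delta_\lambda S(\cdot,\varepsilon)$ with $g$), which is the real content of Zhong's argument and which your proposal would need to establish if you wanted to salvage it along these lines.
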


See \cite[Theorem 4.6]{Zhong}. This result is a strengthening of a result of
Hall--Kemp \cite[Theorem 1.2]{HK}, which requires finiteness (and local
boundedness) of the quantity $\mathrm{tr}[\left\vert (a-\lambda)^{2}%
\right\vert ^{-2}].$

\subsection{The main result}

Our main result is a characterization of points $\lambda$ outside the spectrum
of $a$ as the points where $\frac{\partial S}{\partial\varepsilon}%
(\lambda,\varepsilon)$ extends analytically in $\varepsilon$ to a neighborhood
of $\varepsilon=0.$ 

\begin{theorem}
\label{main.thm}Fix an element $a$ in a tracial von Neumann algebra
$(\mathcal{A},\mathrm{tr})$ and define $S$ by (\ref{Sdef}). Suppose that for a
fixed $\lambda\in\mathbb{C},$ the function%
\begin{equation}
\varepsilon\mapsto\frac{\partial S}{\partial\varepsilon}(\lambda
,\varepsilon),\quad\varepsilon>0, \label{epsilonMapsTo}%
\end{equation}
admits a real-analytic extension from $\varepsilon\in(0,\infty)$ to
$\varepsilon\in(-\delta,\infty)$ for some $\delta>0.$ Then $a-\lambda$ is
invertible, meaning that $\lambda$ is outside the spectrum of $a.$

Conversely, if $\lambda$ is outside the spectrum of $a,$ the map in
(\ref{epsilonMapsTo}) admits a real-analytic extension to $(-\delta,\infty)$
for some $\delta>0.$
\end{theorem}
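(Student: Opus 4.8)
The plan is to express $\partial S/\partial\varepsilon$ in terms of the law $\mu := \mu_{|a-\lambda|^2}$ via \eqref{regResolvent} and to analyze the two directions separately. The key object is the Cauchy (Stieltjes) transform of $\mu$, namely $G(z) = \int_0^\infty (\xi - z)^{-1}\,d\mu(\xi)$, since $\partial S/\partial\varepsilon(\lambda,\varepsilon) = G(-\varepsilon)$. The function $G$ is analytic on $\mathbb{C}\setminus\mathrm{supp}(\mu)$, and $\mathrm{supp}(\mu) \subseteq [0,\infty)$ is exactly the spectrum of $|a-\lambda|^2$.

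For the converse (the easy direction), suppose $\lambda \notin \sigma(a)$, so $a-\lambda$ has a bounded inverse and hence $|a-\lambda|^2 = (a-\lambda)^*(a-\lambda)$ is bounded below, say $|a-\lambda|^2 \geq m > 0$. Then $\mathrm{supp}(\mu) \subseteq [m,\infty)$, so $G$ is analytic on the half-line $(-\infty, m)$; in particular it is real-analytic on $(-m,\infty) \supseteq (-\delta,\infty)$ for any $\delta < m$, and this analytic function agrees with $\varepsilon \mapsto \partial S/\partial\varepsilon(\lambda,\varepsilon)$ on $(0,\infty)$. That gives the desired extension.

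For the main direction, suppose $\varepsilon \mapsto \partial S/\partial\varepsilon(\lambda,\varepsilon)$ extends real-analytically to $(-\delta,\infty)$. First I would upgrade the real-analytic extension to a holomorphic extension of $G$ to a complex neighborhood of $(-\delta,\infty)$; combined with the fact that $G$ is already holomorphic off $[0,\infty)$, this shows $G$ is holomorphic on $\mathbb{C}\setminus [0,\delta]$, so in fact $\mathrm{supp}(\mu) \subseteq [0,\delta]$ — but this alone is not enough, since $\lambda$ could still be in the spectrum of $a$ if $0 \in \mathrm{supp}(\mu)$. The crucial point is to rule out $0 \in \mathrm{supp}(\mu)$. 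Here I would use the Stieltjes inversion / moment structure: since $G$ extends analytically past $0$, all the "negative moments" $\int_0^\infty \xi^{-k}\,d\mu(\xi) = (-1)^{k-1}\frac{1}{(k-1)!}G^{(k-1)}(0)$ are finite, and moreover the power series of $G$ at $0$ has a positive radius of convergence $r > 0$. The finiteness of $\int \xi^{-k}\,d\mu$ for all $k$ with the right growth rate — specifically $\int \xi^{-k}\,d\mu \leq C/r^{\,k-1}$ from the radius of convergence — forces $\mu(\,[0,r)\,) = 0$: if $\mu$ put any mass on $[0,r-\eta]$ the integrals $\int \xi^{-k}\,d\mu$ would grow at least like $(r-\eta)^{-k}$, contradicting the bound. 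Hence $\mathrm{supp}(\mu) \subseteq [r,\infty)$, i.e. $|a-\lambda|^2 \geq r > 0$, so $(a-\lambda)^*(a-\lambda)$ is invertible; by the parallel statement applied to $(a-\lambda)(a-\lambda)^*$ (whose law has the same nonzero part as $\mu$, hence is also bounded below away from $0$), $a-\lambda$ has a two-sided bounded inverse and $\lambda \notin \sigma(a)$.

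The main obstacle I anticipate is the last step — translating analyticity of $G$ at $0$ into a genuine spectral gap for $|a-\lambda|^2$ at $0$. The subtlety flagged in the excerpt (equations \eqref{limdSdEpsilon1}--\eqref{limdSdEpsilon2}) is precisely that mere finiteness of $\lim_{\varepsilon\to 0^+}\partial S/\partial\varepsilon$ — i.e. finiteness of the single integral $\int \xi^{-1}\,d\mu$ — does not give a gap, because $\mu$ can have a soft density accumulating at $0$ (as in the $R$-diagonal Example \ref{hl.ex}). What breaks that example is that $G$ there is \emph{not} analytic across $0$: it has a branch-type singularity. So the argument must genuinely exploit the full strength of real-analytic extendability (equivalently, the radius-of-convergence bound on all negative moments), not just finiteness of finitely many of them. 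I would be careful to phrase the moment estimate so that it is the exponential decay rate $r^{-k}$, coming from analyticity, that does the work; a purely measure-theoretic lemma ("if $\int \xi^{-k}\,d\mu \leq C r^{-(k-1)}$ for all $k$ then $\mathrm{supp}(\mu)\cap[0,r) = \emptyset$") isolates this cleanly.
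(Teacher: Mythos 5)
Your proposal is correct, and for the crucial direction it takes a genuinely different route from the paper's at the step where analyticity of the Cauchy transform $G$ of $\mu := \mu_{|a-\lambda|^2}$ near $0$ must be converted into a spectral gap. The paper observes that the real-analytic extension $f$, being an extension of a real-valued function, is itself real-valued on $(-\delta,\infty)$, so the boundary values $\operatorname{Im} G(x+i0^+)$ vanish locally uniformly near $0$, and then Stieltjes inversion gives $\mu=0$ on a neighborhood of $0$. You instead read off the Taylor coefficients of $G$ at $0$: they compute the negative moments $\int \xi^{-k}\,d\mu$, and the Cauchy estimates coming from the positive radius of convergence give $\int \xi^{-k}\,d\mu \leq C r^{-(k-1)}$, which forces $\operatorname{supp}(\mu)\cap[0,r)=\varnothing$ by the elementary blow-up comparison you sketch. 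Both are correct; the paper's Stieltjes route is shorter and hides the arithmetic inside a standard inversion theorem, while your route is more elementary (it uses only Cauchy's estimate) and makes the exponential moment decay explicit, which nicely highlights \emph{why} mere finiteness of $\int\xi^{-1}\,d\mu$ alone is not enough. Three small cautions: (1) the claim $\int \xi^{-k}\,d\mu = \tfrac{1}{(k-1)!}G^{(k-1)}(0)$ (your sign $(-1)^{k-1}$ is spurious with your convention for $G$) requires a one-line monotone-convergence argument to identify the limit of $G^{(k-1)}(z)$ as $z\to 0^-$ with the integral, since $0$ may a priori lie in $\operatorname{supp}(\mu)$ — worth stating; (2) the interjected sentence ``$G$ is holomorphic on $\mathbb{C}\setminus[0,\delta]$, so $\operatorname{supp}(\mu)\subseteq[0,\delta]$'' is not quite right (the extension is to a neighborhood of $(-\delta,\infty)$, hence to $\mathbb{C}\setminus[\delta,\infty)$ roughly, and the support conclusion would itself need Stieltjes inversion) — but you correctly discard it; (3) for the final step from invertibility of $(a-\lambda)^*(a-\lambda)$ to invertibility of $a-\lambda$, the paper proves a dedicated lemma (Lemma \ref{invertibility.lem}) via polar decomposition and faithfulness of the trace, whereas you appeal to $\mu_{b^*b}=\mu_{bb^*}$ (same moments by traciality) and Proposition \ref{normalSupport.prop} to get both $b^*b$ and $bb^*$ bounded below; both work, and yours is essentially the same fact in measure-theoretic disguise.
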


We abbreviate the condition in the theorem as \textquotedblleft$\partial
S/\partial\varepsilon$ is analytic in $\varepsilon$ at $\varepsilon
=0.$\textquotedblright\ Note that if, for $\lambda$ fixed, $S$ itself is
analytic in $\varepsilon$ at $\varepsilon=0,$ so is $\partial S/\partial
\varepsilon.$

In Section \ref{applications.sec}, we will give several examples where
$\partial S/\partial\varepsilon$ can be computed using the PDE method, giving
restrictions on the spectrum of $a.$ In many of these examples, we will show
that the spectrum of $a$ equals its Brown support.

\begin{remark}
\label{epsilonSquared.rem}Suppose instead of the function $S,$ we consider the
function
\[
\tilde{S}(\lambda,\varepsilon)=S(\lambda,\varepsilon^{2}),
\]
as in \cite{HallHo2}. Suppose, for a fixed $\lambda,$ we can show that
$\tilde{S}(\lambda,\varepsilon)$ has a real-analytic extension from
$\varepsilon\in(-\infty,0)$ to $\varepsilon\in(-\infty,\delta)$ for some
$\delta>0$ and that this extension is an \textbf{even} function of
$\varepsilon$ on $(-\delta,\delta).$ Then $S(\lambda,\varepsilon)=\tilde
{S}(\lambda,\sqrt{\varepsilon})$ will have also have a real-analytic extension
to a neighborhood of $\varepsilon=0.$

We emphasize, however, that the existence of a real-analytic extension of
$\tilde{S}(\lambda,\varepsilon)$ from $\varepsilon\in(-\infty,0)$ to
$\varepsilon\in(-\infty,\delta)$ \emph{does not}---without the evenness
assumption---guarantee that $\lambda$ is outside the spectrum of $a.$ Indeed,
Theorem 6.4 in \cite{DHK} shows that such a real-analytic extension of
$\tilde{S}$ can exist even for $\lambda$ in the Brown support of $a.$
\end{remark}

We need the following (presumably well-known) result for tracial von Neumann algebras.

\begin{lemma}
\label{invertibility.lem}For all $b\in(\mathcal{A},\tau),$ if $b^{\ast}b$ is
invertible, $b$ is also invertible.
\end{lemma}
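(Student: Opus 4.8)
The plan is to use the polar decomposition of $b$ together with the fact that in a von Neumann algebra an element is invertible precisely when it is bounded below and has dense range (equivalently, when its left and right support projections are both $1$). Write $b = v|b|$, where $|b| = (b^{\ast}b)^{1/2}$ and $v$ is the partial isometry with initial space $\overline{\mathrm{ran}\,|b|}$ and final space $\overline{\mathrm{ran}\,b}$. First I would observe that $b^{\ast}b$ being invertible forces $|b| = (b^{\ast}b)^{1/2}$ to be invertible as well: $|b|$ is positive, and the spectral mapping theorem gives $\sigma(|b|) = \{\sqrt{t} : t \in \sigma(b^{\ast}b)\}$, which is bounded away from $0$ since $\sigma(b^{\ast}b) \subseteq [\delta, \infty)$ for some $\delta > 0$. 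Hence $|b|$ has a bounded inverse and its range projection is $1$, so the initial space of $v$ is all of $\mathcal A$ (as a Hilbert module / on the GNS space), i.e.\ $v^{\ast}v = 1$.

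The remaining point is to upgrade $v$ to a unitary, i.e.\ to show $vv^{\ast} = 1$ as well. Here I would invoke the trace: in a tracial von Neumann algebra $(\mathcal A, \mathrm{tr})$, if $p = v^{\ast}v$ and $q = vv^{\ast}$ are the initial and final projections of a partial isometry, then $\mathrm{tr}[p] = \mathrm{tr}[v^{\ast}v] = \mathrm{tr}[vv^{\ast}] = \mathrm{tr}[q]$. Since $p = 1$ we get $\mathrm{tr}[q] = \mathrm{tr}[1] = 1$, so $\mathrm{tr}[1 - q] = 0$; faithfulness of the trace then forces $1 - q = 0$, i.e.\ $vv^{\ast} = 1$. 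Thus $v$ is a unitary, and $b = v|b|$ is a product of two invertible elements, hence invertible, with $b^{-1} = |b|^{-1} v^{\ast}$.

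I expect the main (minor) obstacle to be purely expository: making sure the polar decomposition is invoked in the form valid for a general von Neumann algebra (so that $v \in \mathcal A$ and the support projections lie in $\mathcal A$), and being careful that ``invertible'' means invertible in $\mathcal A$ with bounded inverse, not merely injective or having an unbounded inverse. One could also bypass polar decomposition entirely: set $c = (b^{\ast}b)^{-1}b^{\ast}$ and check directly that $cb = (b^{\ast}b)^{-1}b^{\ast}b = 1$, so $b$ has a left inverse $c \in \mathcal A$; then since $b^{\ast}b$ invertible is symmetric under $b \mapsto b^{\ast}$ only up to replacing $b^{\ast}b$ by $bb^{\ast}$, one instead argues that a left-invertible element of a finite von Neumann algebra is invertible, which again reduces to the trace/faithfulness argument on the idempotent $1 - bc$. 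Either route is short; I would present the polar-decomposition version as it makes the role of finiteness (via the trace) most transparent.
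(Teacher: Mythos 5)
Your proof is correct and follows essentially the same route as the paper's: polar decomposition $b=v|b|$, deduce $v^{\ast}v=1$, then use the trace and faithfulness to get $vv^{\ast}=1$, so $v$ is unitary and $b$ is a product of invertibles. The only cosmetic difference is that you establish $v^{\ast}v=1$ via invertibility of $|b|$ and the range projection, while the paper phrases it through injectivity of $p=|b|$ and $\ker u=\ker p=\{0\}$; the trace step is identical.
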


Of course, this result does not hold for general operators on a Hilbert space.
If $b$ is an isometry that is not surjective, then $b^{\ast}b=1$ is invertible
but $b$ itself is not invertible. The lemma says that such examples cannot
occur in a tracial von Neumann algebra.

\begin{proof}
We use the polar decomposition to write $b=up,$ where $p$ is a non-negative
self-adjoint operator and $u$ is a partial isometry with the kernel of $u$
equal to the kernel of $p.$ Now, $b$ must be injective in order for $b^{\ast
}b$ to be invertible, and therefore $p$ must also be injective. Thus, $\ker
u=\ker p=\{0\}.$ We conclude that $u$ is actually an isometry: $u^{\ast}u=1.$

Meanwhile, it is known \cite[6.1.3 Proposition]{KadisonRingrose} that $u$ and
$p$ must also belong to $\mathcal{A}.$ Then by the cyclic property of the
trace,
\begin{equation}
\mathrm{tr}[1-uu^{\ast}]=\mathrm{tr}[1-u^{\ast}u]=\mathrm{tr}[1-1]=0.
\label{1minusa*a}%
\end{equation}
But $uu^{\ast}$ is the orthogonal projection onto the range of $u$. If this
range were not the whole Hilbert space, $1-uu^{\ast}$ would be a nonzero,
non-negative operator and (\ref{1minusa*a}) would contradict the faithfulness
of the trace. Thus, $u$ is unitary and therefore invertible. But $p=(b^{\ast
}b)^{1/2}$ is also invertible, so we conclude that $b=up$ is invertible.
\end{proof}

\begin{proof}
[Proof of Theorem \ref{main.thm}]We denote by $f$ the real-analytic extension
of the function in (\ref{epsilonMapsTo}), which is real analytic on
$(-\delta,\infty)$ for some $\delta>0.$ Then $f$ has a holomorphic extension,
also called $f,$ from $(-\delta^{\prime},\delta^{\prime})$ to an open disk
$D_{0}(\delta^{\prime})$ of radius $\delta^{\prime}$ centered at 0, for some
$\delta^{\prime}\leq\delta.$

Let $G$ denote the Cauchy transform of $\left\vert a-\lambda\right\vert ^{2},$
defined as%
\begin{equation}
G(z)=\mathrm{tr}\left[  (z-\left\vert a-\lambda\right\vert ^{2})^{-1}\right]
=\int_{0}^{\infty}\frac{1}{z-\xi}~d\mu_{\left\vert a-\lambda\right\vert ^{2}%
}(\xi), \label{CauchyDef}%
\end{equation}
which is a holomorphic function of $\lambda\in\mathbb{C}\setminus
\lbrack0,\infty).$ Note from (\ref{regResolvent}) that
\[
G(z)=-\frac{\partial S}{\partial\varepsilon}(\lambda,-z)=-f(-z)
\]
for $z<0.$ It follows that $G(z)$ agrees with $-f(-z)$ on the connected open
set $D_{0}(\delta^{\prime})\setminus\lbrack0,\delta^{\prime}).$ Thus, the
restriction of $G$ to $D_{0}(\delta^{\prime})\setminus\lbrack0,\delta^{\prime
})$ has a holomorphic extension to $D_{0}(\delta^{\prime}).$

Now, since $\frac{\partial S}{\partial\varepsilon}(\lambda,\varepsilon)$ is
real valued for $\varepsilon<0,$ its real-analytic extension $f$ is also real
valued on $(-\delta,\infty).$ Thus,%
\[
\lim_{y\rightarrow0^{+}}\operatorname{Im}[G(x+iy)]=-\lim_{y\rightarrow0^{+}%
}\operatorname{Im}[f(-x-iy)]=0
\]
for all $x\in(-\delta,\delta),$ where the limit is locally uniform in $x$ by
the continuity of $f$ on $D_{0}(\delta^{\prime}).$ Thus, by the Stieltjes
inversion formula, the measure $\mu_{\left\vert a-\lambda\right\vert ^{2}}$ is
zero on $(-\delta,\delta).$ Proposition \ref{normalSupport.prop} then tells us
that the spectrum of $\left\vert a-\lambda\right\vert ^{2}$ does not include 0.

In the opposite direction, if $\lambda$ is outside the spectrum of $a,$ then
$a-\lambda$ is invertible, so that $\left\vert a-\lambda\right\vert
^{2}=(a-\lambda)^{\ast}(a-\lambda)$ is also invertible. Then for all
$\varepsilon\in\mathbb{R}$ with $\left\vert \varepsilon\right\vert
<\delta:=\left\Vert \left\vert a-\lambda\right\vert ^{-2}\right\Vert ,$ the
inverse of $\left\vert a-\lambda\right\vert ^{2}+\varepsilon$ exists, with%
\begin{align*}
(\left\vert a-\lambda\right\vert ^{2}+\varepsilon)^{-1}  &  =\left\vert
a-\lambda\right\vert ^{2}(1+\varepsilon\left\vert a-\lambda\right\vert
^{-2})\\
&  =\left\vert a-\lambda\right\vert ^{2}\sum_{k=0}^{\infty}(-1)^{k}%
\varepsilon^{k}\left\vert a-\lambda\right\vert ^{-2k}.
\end{align*}
Applying the trace to this relation gives a real-analytic function on
$(-\delta,\delta)$ that agrees with $\frac{\partial S}{\partial\varepsilon
}(\lambda,\varepsilon)$ on $(0,\delta).$
\end{proof}

\section{Applications\label{applications.sec}}

In the remainder of the paper, we study several examples where Theorem
\ref{main.thm} can be used to give information about the spectrum of certain
elements. The examples are mostly ones in which the PDE\ method introduced by
Driver--Hall--Kemp \cite{DHK} is used, such as papers by Hall--Ho
\cite{HallHo1,HallHo2}, Ho \cite{HoElliptic}, Ho--Zhong \cite{HZ},
Demni--Hamdi \cite{DemniHamdi}, and Eaknipitsari--Hall \cite{EH}. We also
analyze the examples studied by Zhong \cite{Zhong}, where Zhong uses
free-probability techniques instead of the PDE\ method but gets formulas
similar to what one obtains from the PDE\ method.

In most cases, we show that the spectrum equals the Brown support, showing
that the PDE\ method is even more powerful than was previously recognized.

We divide the examples into two broad classes, which we refer as
\textquotedblleft additive\textquotedblright\ and \textquotedblleft
multiplicative.\textquotedblright

\subsection{Additive case\label{addStatements.sec}}

A \textbf{semicircular} element $x_{t}$ of variance $t>0$ in a tracial von
Neumann algebra is a self-adoint element whose law is the semicircular measure
on $[-2\sqrt{t},2\sqrt{t}]$, i.e., the measure with density $\frac{1}{2\pi
t}\sqrt{4t-x^{2}}$ on this interval. A \textbf{circular} element $c_{t}$ of
variance $t$ is then an element of the form%
\begin{equation}
c_{t}=\frac{1}{\sqrt{2}}(x_{t}+iy_{t}), \label{ctDef}%
\end{equation}
where $x_{t}$ and $y_{t}$ are freely independent elements of variance $t.$ The
Brown measure of $c_{t}$ is the uniform probability measure on a disk of
radius $\sqrt{t}.$

An \textbf{elliptic} element\ is then an element of the form%
\begin{equation}
g=e^{i\theta}(ax+iby), \label{gdef}%
\end{equation}
where $a,$ $b,$ and $\theta$ are real numbers, with $a$ and $b$ not both zero,
and where $x$ and $y$ are freely independent semicircular elements of variance
1. The $\ast$-distribution of $g$ is determined by the positive real number
$t$ given by%
\begin{equation}
t=\mathrm{tr}[g^{\ast}g] \label{tDef}%
\end{equation}
and the complex number $\gamma$ given by%
\begin{equation}
\gamma=\mathrm{tr}[g^{2}]. \label{gammaDef}%
\end{equation}
Then $\gamma$ satisfies%
\begin{equation}
\left\vert \gamma\right\vert \leq t \label{gammaAndT}%
\end{equation}
and any pair $t>0$ and $\gamma\in\mathbb{C}$ satisfying (\ref{gammaAndT})
arises for some choice of $a,$ $b,$ and $\theta.$ (See \cite[Section
2.1]{HallHo2} or \cite[Section 2.4]{Zhong}.) We use the notation $g_{t,\gamma
}$ to denote such an element.

The case $\gamma=0$ corresponds to the case $a=b$ in (\ref{gdef}), in which
case $g_{t,\gamma}=g_{t,0}$ is a circular element of variance $t.$ The case in
which $\gamma=t$ corresponds to $\theta=b=0$ in (\ref{gdef}) and gives a
semicircular element of variance $a^{2}.$ We refer to models involving
elliptic elements as \textquotedblleft additive,\textquotedblright\ since the
sum of two freely independent elliptic elements is again elliptic.
Specifically, if $g_{t_{1},\gamma_{1}}$ and $g_{t_{2},\gamma_{2}}$ are freely
independent elliptic elements, then%
\begin{equation}
g_{t_{1},\gamma_{1}}+g_{t_{2},\gamma_{2}}\overset{d}{=}g_{t_{1}+t_{2}%
,\gamma_{1}+\gamma_{2}}, \label{sumElliptic}%
\end{equation}
where $\overset{d}{=}$ denotes equality in $\ast$-distribution. (Compare
(\ref{multElliptic}) in the multiplicative case.)

Ho and Zhong \cite[Section 3]{HZ} computed the Brown measure of an element of
the form $x+c_{t},$ where $x$ is self-adjoint and freely independent of
$c_{t}.$ See Figure \ref{bernoulli.fig}. Zhong then computed the Brown measure
of $x+c_{t},$ where $x$ is freely independent of $c_{t}$ but otherwise
arbitrary. See Figure \ref{trinoulli.fig}.%

\begin{figure}[ptb]
\centering
\includegraphics[scale=0.55]{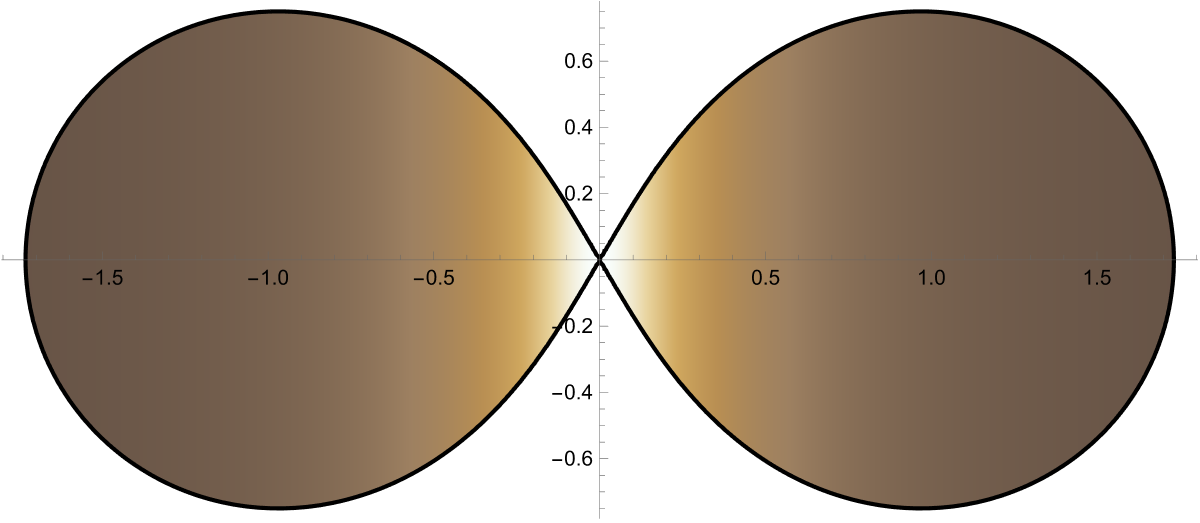}
\caption{Density plot of the Brown measure of $x+c_{t}$, where $x$ is
self-adjoint with $\mu_{x}=\frac{1}{2}(\delta_{-1}+\delta_{1}),$ with $t=1.$}
\label{bernoulli.fig}
\end{figure}

\begin{figure}[ptb]
\centering
\includegraphics[scale=0.4]{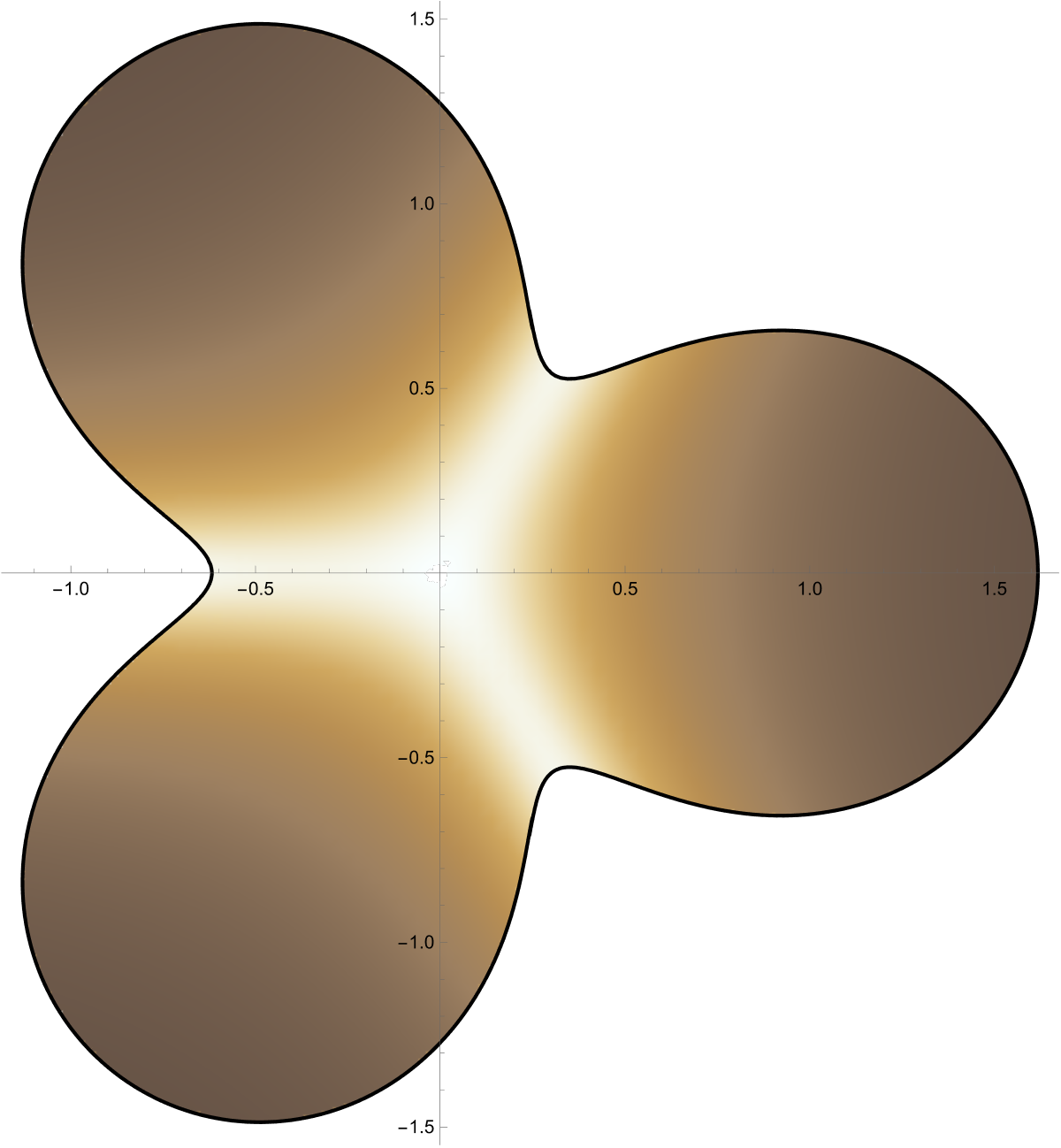}
\caption{Density plot of the Brown measure of $x+c_{t},$ where $x$ is unitary
and $\mu_{x}$ is supported at the third roots of unity, with equal masses, and
$t=1.$}
\label{trinoulli.fig}
\end{figure}

Meanwhile, Hall and Ho \cite{HallHo1} computed the Brown measure of an element
of the form $x+iy,$ where $x$ is self-adjoint, $y$ is semicircular, and $x$
and $y$ are freely independent. Ho \cite{HoElliptic} then computed the Brown
measure of $x+g_{t,\gamma},$ where $x$ is self-adjoint, $g_{t,\gamma}$ is
elliptic with $\gamma\in\mathbb{R}$, and $x$ and $g_{t,\gamma}$ are freely
independent. See Figure \ref{bernoullipush.fig}. Finally, Zhong \cite{Zhong}
computed the Brown measure of $x+g_{t,\gamma}$ where $x$ is arbitrary,
$g_{t,\gamma}$ is a general elliptic element, and $x$ and $g_{t,\gamma}$ are
freely independent.

\begin{figure}[ptb]
\centering
\includegraphics[scale=0.7]{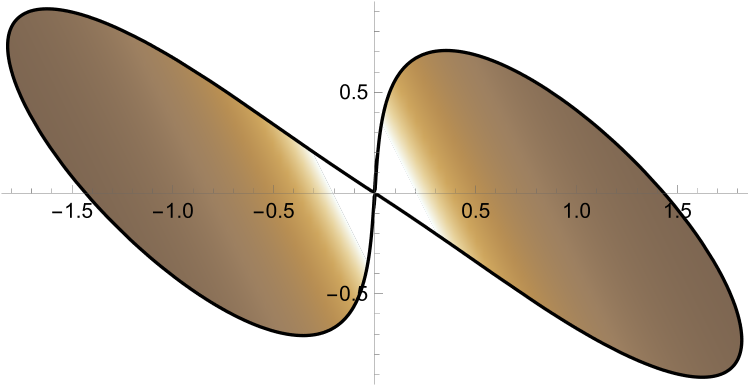}
\caption{Density plot of the Brown measure of $x+g_{t,\gamma}$ with $x$
self-adjoint and $\mu_{x}=\frac{1}{2}(\delta_{-1}+\delta_{1}),$ with $t=1$ and
$\gamma=-i/2.$}
\label{bernoullipush.fig}
\end{figure}

The papers of Ho--Zhong \cite{HZ}, Hall--Ho \cite{HallHo1}, and Ho
\cite{HoElliptic} are based on the PDE\ method introduced in \cite{DHK}. The
paper of Zhong \cite{Zhong}, by contrast, uses techniques of free probability
and subordination functions. Nevertheless, the formulas obtained by Zhong are
parallel to what one gets when using the PDE\ method.

\begin{theorem}
\label{add1.thm}Let $g_{t,\gamma}$ be an elliptic element with parameters $t$
and $\gamma$ as in (\ref{tDef}) and (\ref{gammaDef}) and let $x$ be a
\emph{self-adjoint} element that is freely independent of $g_{t,\gamma}.$ Then
the spectrum and the Brown support of $x+g_{t,\gamma}$ coincide:%
\[
\sigma(x+g_{t,\gamma})=\mathrm{supp}(\mathrm{Br}_{x+g_{t,\gamma}}).
\]

\end{theorem}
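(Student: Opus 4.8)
The plan is to reduce the statement to Theorem \ref{main.thm}. One inclusion is automatic: $\mathrm{supp}(\mathrm{Br}_{x+g_{t,\gamma}})\subseteq\sigma(x+g_{t,\gamma})$ by Point \ref{brownSpec.point} of Proposition \ref{BrownProperties.prop}. For the reverse inclusion, write $a=x+g_{t,\gamma}$, fix $\lambda_{0}$ in the open complement of the Brown support, and choose a disk $D$ about $\lambda_{0}$ still disjoint from $\mathrm{supp}(\mathrm{Br}_{a})$. By Theorem \ref{main.thm} it is enough to show that $\varepsilon\mapsto\frac{\partial S}{\partial\varepsilon}(\lambda_{0},\varepsilon)$ admits a real-analytic extension from $(0,\infty)$ across $\varepsilon=0$.

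The tool is the description of $\frac{\partial S}{\partial\varepsilon}(\lambda,\varepsilon)$ for $a=x+g_{t,\gamma}$ furnished by the Brown-measure computations of Ho--Zhong \cite{HZ}, Ho \cite{HoElliptic}, and Zhong \cite{Zhong}. I would package it through the Hermitization: in $M_{2}(\mathcal{A})$ with normalized trace $\tau_{2}$, put $X=\left(\begin{smallmatrix}0 & x-\lambda\\ (x-\lambda)^{\ast} & 0\end{smallmatrix}\right)$ and $Y=\left(\begin{smallmatrix}0 & g_{t,\gamma}\\ g_{t,\gamma}^{\ast} & 0\end{smallmatrix}\right)$. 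Then $X$ and $Y$ are free over $M_{2}(\mathbb{C})$, $Y$ is an $M_{2}(\mathbb{C})$-valued semicircular element, and
\[
\frac{\partial S}{\partial\varepsilon}(\lambda,\varepsilon)=\mathrm{tr}\big[(\left\vert a-\lambda\right\vert^{2}+\varepsilon)^{-1}\big]=\tau_{2}\big[\big((X+Y)^{2}+\varepsilon\big)^{-1}\big].
\]
Since the distribution of $X+Y$ is symmetric about $0$, its scalar Cauchy transform $G(z)=\tau_{2}[(z-(X+Y))^{-1}]$ is odd, and a partial-fraction identity rewrites the right-hand side as $-G(i\sqrt{\varepsilon})/(i\sqrt{\varepsilon})$; as $G(z)/z$ is then even, $\frac{\partial S}{\partial\varepsilon}$ is automatically a function of $\varepsilon=-z^{2}$ that is holomorphic wherever $G$ is holomorphic --- which is what turns the $\sqrt{\varepsilon}$-versus-$\varepsilon$ subtlety of Remark \ref{epsilonSquared.rem} in our favor. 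Finally, operator-valued subordination for a free sum with a semicircular element expresses $G$ through the subordination function $\omega$ solving a fixed-point equation $\omega(b)=b-\eta(\mathcal{G}_{X}(\omega(b)))$, where $\eta$ is the covariance of $Y$ and $\mathcal{G}_{X}$ is an explicit analytic $M_{2}(\mathbb{C})$-valued map built from the Cauchy transform of $\mu_{x}$ by a Schur-complement inversion; equivalently one may use the method-of-characteristics formula for $S$ produced by the PDE method in \cite{HZ,HoElliptic,Zhong}.

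The key input I would draw from \cite{HZ,HoElliptic,Zhong} is that, for $\lambda$ off $\mathrm{supp}(\mathrm{Br}_{a})$, this data extends analytically past $\varepsilon=0$: at $\varepsilon=0$ the fixed-point problem for $\omega$ has a solution at which $I+\eta\circ D\mathcal{G}_{X}(\omega(0))$ is invertible (equivalently, the characteristic curves defining $S$ reach $\varepsilon=0$ without a singularity), and this nondegeneracy is exactly the condition that cuts out the complement of the Brown support in these elliptic models. That some such structural hypothesis is needed is visible from Example \ref{hl.ex}: for $R$-diagonal elements the analogous nondegeneracy fails inside the spectral hole, so mere finiteness of $\mathrm{tr}[\left\vert a-\lambda\right\vert^{-2}]$ would not suffice. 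Zhong's theorem quoted above already gives $\mathrm{tr}[\left\vert a-\lambda\right\vert^{-2}]<\infty$ on $D$, so the solution for $\varepsilon>0$ has a limit as $\varepsilon\rightarrow0^{+}$; the analytic implicit function theorem then extends $\omega$, hence $G$, hence $\frac{\partial S}{\partial\varepsilon}$, holomorphically across $\varepsilon=0$, and Theorem \ref{main.thm} yields $\lambda_{0}\notin\sigma(a)$.

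The main obstacle is precisely this middle step: isolating from \cite{HZ,HoElliptic,Zhong} the statement that off the Brown support the relevant subordination (or characteristic) data is nondegenerate at $\varepsilon=0$, and verifying it uniformly on $D$. The sharpest form of the difficulty is a general \emph{complex} $\gamma$ together with \emph{self-adjoint} $x$: \cite{HoElliptic} handles self-adjoint $x$ only for real $\gamma$, while \cite{Zhong} handles general $\gamma$ but for arbitrary $x$, so one must either combine these, reduce to a case already treated, or carry out the subordination analysis directly in this intermediate situation.
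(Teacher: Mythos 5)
Your proof takes a genuinely different route from the paper's --- Hermitization in $M_{2}(\mathcal{A})$ with operator-valued subordination, versus the paper's two-stage argument: a Hamilton--Jacobi analysis of the PDE (\ref{PDEadditive}) to handle $x+c_{t}$ (Theorem \ref{saPlusCirc.thm}), followed by Zhong's push-forward map $\Phi_{t,\gamma}$ from Theorem \ref{ZhongPush.thm} and the conservation relation (\ref{conservePepsilon}) to transport the analytic extension of $\partial S/\partial\varepsilon$ from $x+c_{t}$ to $x+g_{t,\gamma}$. Your parity observation is a clean reformulation of Remark \ref{epsilonSquared.rem}: since the Hermitized variable $X+Y$ has symmetric distribution, its Cauchy transform $G$ is odd, $G(z)/z$ is even, and $\partial S/\partial\varepsilon=-G(i\sqrt{\varepsilon})/(i\sqrt{\varepsilon})$ is therefore a genuine function of $\varepsilon=-z^{2}$; the evenness requirement flagged in Remark \ref{epsilonSquared.rem} is supplied for free rather than needing to be checked.

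However, there is a genuine gap, which you flag yourself in the final paragraph, and it is not a loose end but the heart of the matter. You assert as a ``key input'' from \cite{HZ,HoElliptic,Zhong} that off the Brown support the fixed-point problem for $\omega$ is nondegenerate at $\varepsilon=0$, and that this nondegeneracy ``is exactly the condition that cuts out the complement of the Brown support.'' Neither claim is available to cite: the equality of the spectrum and the Brown support for $x+g_{t,\gamma}$ is what this theorem proves, so the nondegeneracy cannot be read off from the Brown-measure papers as if it were equivalent to being off the support. Moreover the sentence ``Zhong's theorem quoted above already gives $\mathrm{tr}[|a-\lambda|^{-2}]<\infty$ on $D$'' reverses the direction of the quoted \cite[Theorem 4.6]{Zhong}: that theorem says finiteness near $\lambda_{0}$ implies $\lambda_{0}$ is off the Brown support, not conversely; indeed Example \ref{hl.ex} shows the converse fails for $R$-diagonal elements, a point you yourself raise. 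In the paper, the nondegeneracy step is exactly where the self-adjointness of $x$ is used and is established by a concrete calculation: Lemma \ref{specContain.lem} (via Proposition \ref{normalSupport.prop} and Zhong's Lemma 4.5) gives $\sigma(x)\subset\overline{\Sigma}_{t}$, and Lemma \ref{Tgreater.lem} upgrades $T\ge t$ to the \emph{strict} inequality $T(\lambda)>t$ outside $\overline{\Sigma}_{t}$ by showing $\Delta(1/T)>0$ off $\sigma(x)$; the strict inequality is what makes $1-t\tilde{p}_{\varepsilon,0}>0$ and hence the Hamilton--Jacobi (or, in your language, $I+\eta\circ D\mathcal{G}_{X}(\omega(0))$) invertible. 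Your sketch has no substitute for this step, and without it the implicit-function-theorem extension of $\omega$ past $\varepsilon=0$ is not justified --- so the appeal to Theorem \ref{main.thm} does not go through.
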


The preceding result does not hold if $x$ is a arbitrary element freely
independent of $g_{t,\gamma},$ even in the case that $g_{t,\gamma}$ is
circular, as the following example shows.

\begin{example}
\label{specBrownSupport.ex}Suppose $h$ is a non-negative self-adjoint element
such that (1) the spectrum of $h$ contains $0$ and (2) $h$ has an $L^{2}$
inverse, meaning that
\[
\mathrm{tr}\left[  h^{-2}\right]  :=\int_{0}^{a}\frac{1}{\xi^{2}}~d\mu_{h}%
(\xi)<\infty.
\]
Let $x=uh,$ where $u$ is a Haar unitary that is freely independent of $h.$
Then by Example \ref{hl.ex}, the Brown support of $x$ is a proper subset of the spectrum of $x$. Furthermore, for all sufficiently small $t,$ the Brown support of $x+c_t$ is a proper subset of the spectrum of
$x+c_{t}$.
\end{example}

The proof of this result is deferred to Section \ref{exampleCheck.sec}.

A natural assumption to impose on $x$ is that the desired result (equality of
spectrum and Brown support) should hold for $x$ itself.

\begin{theorem}
\label{add2.thm}Let $g_{t,\gamma}$ be an elliptic element with parameters $t$
and $\gamma$ as in (\ref{tDef}) and (\ref{gammaDef}) and let $x$ be an element
that is freely independent of $g_{t,\gamma}$ but not necessarily self-adjoint.
Assume that $\sigma(x)=\mathrm{supp}(\mathrm{Br}_{x}),$ which will hold, for
example, if $x$ is normal (Proposition \ref{normalSupport.prop}). Then
\[
\sigma(x+g_{t,\gamma})=\mathrm{supp}(\mathrm{Br}_{x+g_{t,\gamma}}).
\]

\end{theorem}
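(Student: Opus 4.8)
The inclusion $\mathrm{supp}(\mathrm{Br}_{x+g_{t,\gamma}})\subseteq\sigma(x+g_{t,\gamma})$ is automatic from Point \ref{brownSpec.point} of Proposition \ref{BrownProperties.prop}, so the task is the reverse inclusion. By Theorem \ref{main.thm} it is enough to show that for every $\lambda\in\mathbb{C}$ lying \emph{outside} $\mathrm{supp}(\mathrm{Br}_{x+g_{t,\gamma}})$ the function $\varepsilon\mapsto\frac{\partial S}{\partial\varepsilon}(\lambda,\varepsilon)$ associated by \eqref{Sdef} and \eqref{regResolvent} to $a=x+g_{t,\gamma}$ extends real-analytically from $(0,\infty)$ to $(-\delta,\infty)$ for some $\delta>0$. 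I would do this through the PDE method of \cite{DHK}, in the form developed for additive elliptic perturbations in \cite{HZ,HallHo1,HoElliptic,Zhong}. Embed $x+g_{t,\gamma}$ in the one-parameter family $x+g_{\tau,\tau\gamma/t}$, $0\le\tau\le t$, which is a legitimate family of elliptic perturbations by \eqref{sumElliptic} (with $\tau=0$ giving $x$ and $\tau=t$ giving $x+g_{t,\gamma}$), and let $S(\tau,\lambda,\varepsilon)$ be its regularized log potential, so $S(0,\cdot,\cdot)=S^{(x)}$ is the regularized log potential of $x$ and $S(t,\cdot,\cdot)=S$. Then $S$ satisfies a first-order Hamilton--Jacobi-type PDE in $(\tau,\lambda,\bar\lambda,\varepsilon)$; solving it by the method of characteristics produces an identity
\[
\frac{\partial S}{\partial\varepsilon}(t,\lambda,\varepsilon)=J(t,\lambda,\varepsilon)\,\frac{\partial S^{(x)}}{\partial\varepsilon}\!\big(\lambda_0(t,\lambda,\varepsilon),\varepsilon_0(t,\lambda,\varepsilon)\big)+R(t,\lambda,\varepsilon),
\]
valid initially for $\varepsilon>0$, where $(\lambda_0,\varepsilon_0)$ are the feet of the characteristic curves, $J$ is the associated Jacobian factor, and $R$ is an explicit elementary term, all obtained from the (analytic) characteristic ODEs.

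The first step is analyticity bookkeeping: for $\lambda$ fixed, the maps $\varepsilon\mapsto\lambda_0(t,\lambda,\varepsilon)$, $\varepsilon_0(t,\lambda,\varepsilon)$, $J(t,\lambda,\varepsilon)$, $R(t,\lambda,\varepsilon)$ extend holomorphically to a complex neighborhood of any point of $(0,\infty)$, and --- this is where the hypothesis $\lambda\notin\mathrm{supp}(\mathrm{Br}_{x+g_{t,\gamma}})$ is used --- to a complex neighborhood of $\varepsilon=0$ as well, since outside the Brown support the characteristic system remains non-degenerate down to $\varepsilon=0$, as in the Brown-support analysis of \cite{HZ,HoElliptic,Zhong}. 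The second step is to identify the endpoint: as $\varepsilon\to0^{+}$ the foot $(\lambda_0,\varepsilon_0)$ converges to $(\lambda_0(t,\lambda,0),\varepsilon_0(t,\lambda,0))$, and for $\lambda$ outside $\mathrm{supp}(\mathrm{Br}_{x+g_{t,\gamma}})$ this limit is ``good'' in the sense that either $\varepsilon_0(t,\lambda,0)>0$, or $\varepsilon_0(t,\lambda,0)=0$ and $\lambda_0(t,\lambda,0)$ lies outside $\mathrm{supp}(\mathrm{Br}_x)$. Geometrically, the subordination map $\lambda\mapsto\lambda_0(t,\lambda,0)$ sends the complement of the Brown support of $x+g_{t,\gamma}$ into the complement of the Brown support of $x$; this is where one needs Zhong's computation of the Brown measure of $x+g_{t,\gamma}$ for a general, not necessarily self-adjoint, element $x$.

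Now the hypothesis $\sigma(x)=\mathrm{supp}(\mathrm{Br}_x)$ enters. By the converse half of Theorem \ref{main.thm} applied to $x$, the function $\varepsilon\mapsto\frac{\partial S^{(x)}}{\partial\varepsilon}(\mu,\varepsilon)$ extends real-analytically past $\varepsilon=0$ for every $\mu\notin\mathrm{supp}(\mathrm{Br}_x)$; and $\varepsilon\mapsto\mathrm{tr}[(|x-\mu|^{2}+\varepsilon)^{-1}]=\frac{\partial S^{(x)}}{\partial\varepsilon}(\mu,\varepsilon)$ is in any case real-analytic near every point of $(0,\infty)$. Hence in both cases of the preceding paragraph the composition $\varepsilon\mapsto\frac{\partial S^{(x)}}{\partial\varepsilon}\big(\lambda_0(t,\lambda,\varepsilon),\varepsilon_0(t,\lambda,\varepsilon)\big)$ extends real-analytically to a neighborhood of $\varepsilon=0$; multiplying by $J$ and adding $R$, the same is true of $\varepsilon\mapsto\frac{\partial S}{\partial\varepsilon}(t,\lambda,\varepsilon)$, and Theorem \ref{main.thm} then gives $\lambda\notin\sigma(x+g_{t,\gamma})$. (Taking $x$ self-adjoint recovers Theorem \ref{add1.thm}: there the needed analyticity of $\frac{\partial S^{(x)}}{\partial\varepsilon}(\mu,\cdot)$ at $\varepsilon=0$ for $\mu\notin\sigma(x)\subseteq\mathbb{R}$ is immediate from the scalar Cauchy transform of the law of $x$, without recourse to Theorem \ref{main.thm}.)

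I expect the real difficulty to sit in the first two steps --- propagating the analytic dependence of the characteristic data on $\varepsilon$ all the way to $\varepsilon=0$ and pinning down the endpoint behaviour --- because that is precisely where the geometry of the Brown support of $x+g_{t,\gamma}$ and the loss of self-adjointness of $x$ must be controlled; the closing composition-of-analytic-functions argument is routine. A subsidiary point to verify is that the PDE and its characteristic solution, derived in \cite{HZ,HallHo1,HoElliptic,Zhong} for particular initial elements, carry over verbatim to a general initial element $x$, since they involve $x$ only through $S^{(x)}$ and its derivatives.
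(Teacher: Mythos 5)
Your proposal captures the right spirit (propagate analyticity of $\partial S/\partial\varepsilon$ along characteristics of a Hamilton--Jacobi PDE, then invoke Theorem \ref{main.thm}), but it deviates from the paper's route in a way that leaves a genuine gap.

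\textbf{Structure: one flow vs.\ two.} You parametrize the whole deformation by a single diagonal flow $\tau\mapsto x+g_{\tau,\tau\gamma/t}$. The paper instead factors the deformation into two stages for which the necessary PDE/Hamilton--Jacobi machinery is already in the literature: first the $t$-flow from $x$ to $x+c_t$ at fixed $\gamma=0$ (the PDE \eqref{PDEadditive} from \cite{HZ}, handled in Theorem \ref{saPlusCirc.thm}, Theorem \ref{2cond.thm}, Corollaries \ref{specIn.cor}--\ref{equalSupports.cor}), and then the $\gamma$-flow from $x+c_t$ to $x+g_{t,\gamma}$ at fixed $t$ (the PDE \eqref{Spde1}, for which the crucial structural fact is that $\varepsilon$ does not appear on the right-hand side, so $p_\varepsilon$ is conserved along characteristics; this is Equation \eqref{conservePepsilon}). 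The second stage is really Zhong's push-forward Theorem \ref{ZhongPush.thm} in disguise: the characteristic map in $\lambda$ is the explicit subordination map $\Phi_{t,\gamma}$ of \eqref{PhiDef}. Your diagonal flow would require deriving a mixed PDE combining \eqref{PDEadditive} and \eqref{Spde1} and redoing the full characteristic analysis from scratch, which you do not do and which is not in the cited references; the paper's two-step decomposition is precisely how it avoids that work.

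\textbf{The gap: coverage of the complement of the Brown support.} You write that, for $\lambda$ outside $\mathrm{supp}(\mathrm{Br}_{x+g_{t,\gamma}})$, ``the subordination map $\lambda\mapsto\lambda_0(t,\lambda,0)$ sends the complement of the Brown support of $x+g_{t,\gamma}$ into the complement of the Brown support of $x$,'' and you invoke this as the place ``one needs Zhong's computation.'' But the characteristics run \emph{forward} from an initial foot $(\lambda_0,\varepsilon_0)$, not backward from $\lambda$; to apply the second Hamilton--Jacobi formula at a given $\lambda$ outside the Brown support, you must first \emph{produce} such a foot. That is a surjectivity statement, and it is nontrivial. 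The paper proves it via Lemma \ref{pushSupport.lem} (pushforward support is image of support), Lemma \ref{InjPhi.lem} ($\Phi_{t,\gamma}$ is injective on $(\overline{\Sigma}_t)^c$, using Cauchy--Schwarz and the bound $\mathrm{tr}[|x-\lambda|^{-2}]<1/t$), and Lemma \ref{PhiSurj.lem} ($\Phi_{t,\gamma}:\mathbb{C}\to\mathbb{C}$ is onto, by a winding-number/fundamental-group argument). Only then does one know that every $z\notin E_{t,\gamma}=\Phi_{t,\gamma}(\overline{\Sigma}_t)$ is $\Phi_{t,\gamma}(\lambda)$ for some $\lambda\notin\overline{\Sigma}_t$. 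Your sketch simply asserts the endpoint is ``good'' without supplying this. Relatedly, your dichotomy ``either $\varepsilon_0(t,\lambda,0)>0$ or $\varepsilon_0(t,\lambda,0)=0$ and $\lambda_0\notin\mathrm{supp}(\mathrm{Br}_x)$'' is not the one the paper uses: in the paper's analysis one always lets $\varepsilon_0\to0$, and the content is rather that $T(\lambda_0)>t$ (Lemma \ref{Tg.lem}), which needs $\lambda_0\notin\sigma(x)$ --- and this is exactly where the hypothesis $\sigma(x)=\mathrm{supp}(\mathrm{Br}_x)$ enters, since it yields $\sigma(x)\subset\overline{\Sigma}_t$ via \cite[Theorem 4.6]{Zhong} (Corollary \ref{equalSupports.cor}). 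Your use of the hypothesis, by contrast, goes through the converse half of Theorem \ref{main.thm} to get analyticity of the initial data; that is correct as far as it goes, but without $\sigma(x)\subset\overline{\Sigma}_t$ you cannot control the lifetime of the characteristics (the paper needs $T(\lambda_0)>t$, not merely $\lambda_0\notin\sigma(x)$), and your sketch does not address this.

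\textbf{Summary.} The composition-of-analytic-functions endgame and the invocation of Theorem \ref{main.thm} are sound, and your instinct that the hard work lies in controlling the characteristic data down to $\varepsilon=0$ is right. What is missing is (i) a worked-out PDE for your one-parameter diagonal family, and (ii) the surjectivity/injectivity argument showing that the image of $(\overline{\Sigma}_t)^c$ under the subordination map is exactly the complement of the Brown support of $x+g_{t,\gamma}$. The paper supplies (ii) with Lemmas \ref{InjPhi.lem}, \ref{pushSupport.lem}, \ref{PhiSurj.lem} and sidesteps (i) by factoring the flow.
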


For more detailed statements of the preceding results, see Corollary
\ref{equalSupports.cor} in the circular case and Theorem
\ref{ellipticEquality.thm} in the general elliptic case.

For general $x,$ we can still prove the desired result, provided that $t$ is
large enough.

\begin{theorem}
\label{add3.thm}Let $g_{t,\gamma}$ be an elliptic element with parameters $t$
and $\gamma$ as in (\ref{tDef}) and (\ref{gammaDef}) and let $x$ be an element
that is freely independent of $g_{t,\gamma}$ but not necessarily self-adjoint.
Then for all sufficiently large $t>0,$ we have
\[
\sigma(x+g_{t,\gamma})=\mathrm{supp}(\mathrm{Br}_{x+g_{t,\gamma}})
\]
for all $\gamma\in\mathbb{C}$ with $\left\vert \gamma\right\vert \leq t.$
\end{theorem}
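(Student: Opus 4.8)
The plan is to combine Theorem~\ref{main.thm} with the PDE method in the spirit of \cite{DHK} and \cite{HallHo1,HallHo2}, but with a twist that exploits the large parameter $t$. Recall that the Brown support is always contained in the spectrum, so only the reverse inclusion $\sigma(x+g_{t,\gamma}) \subseteq \mathrm{supp}(\mathrm{Br}_{x+g_{t,\gamma}})$ requires proof; equivalently, if $\lambda_{0}$ lies outside the Brown support, one must show $\lambda_{0}$ lies outside the spectrum. By Theorem~\ref{main.thm} it suffices to show that for such $\lambda_{0}$ the function $\varepsilon \mapsto \partial S/\partial\varepsilon(\lambda_{0},\varepsilon)$ extends real-analytically across $\varepsilon=0$. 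The strategy is therefore: (i) write down the PDE satisfied by the regularized log potential $S(\lambda,\varepsilon)$ of $x+g_{t,\gamma}$, which expresses $\partial S/\partial t$ in terms of $\partial S/\partial\varepsilon$ and the $\lambda$-gradient of $S$ (the elliptic analogue of the Hamilton--Jacobi / Burgers-type equation in \cite{HallHo2}); (ii) solve this PDE by characteristics, starting from the initial condition at $t=0$, namely $S(\lambda,\varepsilon)\big|_{t=0} = \mathrm{tr}[\log(|x-\lambda|^{2}+\varepsilon)]$, which is the regularized log potential of $x$ alone; (iii) track how real-analyticity in $\varepsilon$ at $\varepsilon=0$ propagates — or rather is \emph{created} — along the characteristic flow as $t$ increases.

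The key point for the "large $t$" hypothesis is this. When $t=0$, the map $\varepsilon \mapsto \partial S/\partial\varepsilon(\lambda_{0},\varepsilon)$ is analytic at $0$ precisely when $\lambda_{0}$ is outside $\sigma(x)$, which we do not assume. But the elliptic PDE has a regularizing effect: after flowing for time $t$, the characteristic starting at $(\lambda_{0},0)$ reaches a point $(\lambda_{0},\varepsilon_{t})$ with $\varepsilon_{t}>0$ strictly, and the value of $\partial S/\partial\varepsilon$ there is controlled by the resolvent $\mathrm{tr}[(|x-\tilde\lambda|^{2}+\varepsilon_{t})^{-1}]$ at some shifted point $\tilde\lambda$ and strictly positive regularization. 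Because the regularization $\varepsilon_{t}$ is bounded below by a quantity growing with $t$ — roughly $\varepsilon_{t}\gtrsim t$ minus a term controlled by $\|x\|$ and $|\lambda_{0}|$ and the geometry of $\mathrm{supp}(\mathrm{Br}_{x+g_{t,\gamma}})$ — once $t$ exceeds a threshold depending only on $\|x\|$ (and uniformly in $\gamma$ with $|\gamma|\le t$, since the $\gamma$-dependence only rotates/shears the picture), the point $(\lambda_{0},0)$ sits in the region where $S$ is manifestly real-analytic in $\varepsilon$, and one reads off the analytic extension directly from the resolvent expansion $(|x-\tilde\lambda|^{2}+\varepsilon)^{-1}=\sum_{k\ge0}(-1)^{k}\varepsilon^{k}(|x-\tilde\lambda|^{2}+\varepsilon_{t})^{-k-1}$ centered at $\varepsilon_{t}>0$, valid for $|\varepsilon-\varepsilon_{t}|<\varepsilon_{t}$, in particular in a neighborhood of $\varepsilon=0$. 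Then Theorem~\ref{main.thm} gives $\lambda_{0}\notin\sigma(x+g_{t,\gamma})$.

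Concretely I would proceed as follows. First, invoke the explicit description of $\mathrm{supp}(\mathrm{Br}_{x+g_{t,\gamma}})$ from Zhong's computation (and the PDE description in the self-adjoint/elliptic subcases) to get a lower bound: if $\lambda_{0}\notin\mathrm{supp}(\mathrm{Br}_{x+g_{t,\gamma}})$ then $|\lambda_{0}|$ is either large or $\lambda_{0}$ is "far" from the relevant spectral data of $x$, in a quantitative sense. Second, set up the characteristic equations for the elliptic PDE with $\varepsilon$ as an active variable, as in \cite[Section~2]{HallHo2}, and verify that along characteristics issued from $\varepsilon=0$ the $\varepsilon$-coordinate is nondecreasing and in fact increases at a rate bounded below in terms of the real part of a subordination-type function, which for $\lambda_{0}$ outside the Brown support is bounded away from zero. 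Third, integrate to conclude $\varepsilon_{t}\geq c(t,\|x\|,\lambda_{0})>0$ and show $c>0$ for all $t$ past a threshold independent of $\gamma$. Fourth, use the analytic dependence of $\partial S/\partial\varepsilon$ on $(\lambda,\varepsilon)$ in the region $\varepsilon>0$, together with the resolvent expansion at $\varepsilon_{t}$, to produce the required analytic extension across $\varepsilon=0$, and apply Theorem~\ref{main.thm}.

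The main obstacle I expect is step three: controlling $\varepsilon_{t}$ from below \emph{uniformly in $\gamma$} with $|\gamma|\le t$. The elliptic flow is not simply the circular flow rescaled; the parameter $\gamma$ enters the Hamiltonian and can, a priori, conspire with the geometry of $\mathrm{supp}(\mathrm{Br}_{x})$ to slow the growth of $\varepsilon_{t}$ along certain characteristics. Handling this will likely require a careful estimate showing that the "bad" directions in $\gamma$ correspond to $\lambda_{0}$ being pulled toward the Brown support — i.e., that the obstruction is self-defeating — so that the threshold on $t$ can be taken independent of $\gamma$. A secondary technical point is making sure the characteristics issued from $(\lambda_{0},0)$ actually exist for all time up to $t$ and do not cross (so that $S$ along them is well-defined and smooth); this is where one uses that $\lambda_{0}$ is outside the \emph{closed} Brown support, giving a definite gap, together with the convexity/monotonicity properties of $S$ in $\varepsilon$ noted after~(\ref{Sdef}).
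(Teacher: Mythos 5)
Your central mechanism is not how the PDE actually behaves, and the gap is fatal to the proposal. You claim that ``after flowing for time $t$, the characteristic starting at $(\lambda_{0},0)$ reaches a point $(\lambda_{0},\varepsilon_{t})$ with $\varepsilon_{t}>0$ strictly,'' with $\varepsilon_{t}$ ``bounded below by a quantity growing with $t$.'' This is backwards. For the circular PDE (\ref{PDEadditive}) the characteristics satisfy $\varepsilon(t)=\varepsilon_{0}(1-tp_{\varepsilon,0})^{2}$; the analysis of points outside the Brown support proceeds precisely by taking $\varepsilon_{0}\rightarrow0^{+}$, which forces $\varepsilon(t)\rightarrow0$ as well. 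There is no spontaneous regularization of $\varepsilon$ along the flow. For the elliptic PDE (\ref{Spde1}), $\partial S/\partial\gamma=-\frac{1}{2}(\partial S/\partial\lambda)^{2}$, no $\varepsilon$-derivative even appears, so $\varepsilon$ is literally constant along the $\gamma$-flow. The expansion you propose to write, a Neumann series for $(|x-\tilde\lambda|^{2}+\varepsilon)^{-1}$ centered at $\varepsilon_{t}>0$, is therefore not available: the characteristic lands at $\varepsilon=0$, not at a positive height.

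The paper's actual argument is almost disappointingly simple and entirely avoids the delicate large-$t$ flow estimates you anticipate. The function $T(\lambda)=1/\mathrm{tr}[|x-\lambda|^{-2}]$ from (\ref{Tdef}) is upper semicontinuous (Lemma \ref{upperSemi.lem}) and the spectrum $\sigma(x)$ is compact, so $T$ attains a finite maximum $T_{\max}$ on $\sigma(x)$. Hence for $t>T_{\max}$ one has $\sigma(x)\subset\Sigma_{t}\subset\overline{\Sigma}_{t}$, which is exactly the hypothesis of Corollary \ref{specIn.cor} (circular case) and of Theorem \ref{ellipticEquality.thm} (general elliptic case). Since $T$ and $\Sigma_{t}$ are defined purely in terms of $x$ and do not involve $\gamma$ at all, the threshold $T_{\max}$ is automatically $\gamma$-independent; the $\gamma$-uniformity that you flag as a ``main obstacle'' never arises. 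The passage from circular to elliptic is then handled not by re-running the PDE for general $\gamma$ but by the push-forward structure (Theorem \ref{ZhongPush.thm}): $\mathrm{Br}_{x+g_{t,\gamma}}$ is the push-forward of $\mathrm{Br}_{x+c_{t}}$ under $\Phi_{t,\gamma}$, $\Phi_{t,\gamma}$ is surjective (Lemma \ref{PhiSurj.lem}) and injective outside $\overline{\Sigma}_{t}$ (Lemma \ref{InjPhi.lem}), and Proposition \ref{specElliptic.prop} transfers the spectral exclusion from the circular to the elliptic picture. Your proposal would need to be rebuilt from scratch around this structure to be salvageable.
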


See Corollary \ref{largeT.cor}.

\begin{remark}
In Theorems \ref{add1.thm}, \ref{add2.thm}, and \ref{add3.thm}, we may take
$\gamma=0,$ in which case $g_{t,\gamma}$ becomes a circular element of
variance $t$ and we obtain%
\[
\sigma(x+c_{t})=\mathrm{supp}(\mathrm{Br}_{x+c_{t}}),
\]
under the stated hypotheses.
\end{remark}

\begin{remark}
The proofs of the preceding theorems rely on the prior computation of the
relevant Brown support as the closure of a certain domain. Our new
contribution is to show that there is no spectrum outside the closure of the
domain. Then since the Brown measure of any element is supported on its
spectrum, we obtain equality of the spectrum and Brown support.

In the circular case, $\mathrm{supp}(\mathrm{Br}_{x+c_{t}})$ was computed---by
Ho--Zhong \cite[Section 3]{HZ} when $x$ is self-adjoint and by Zhong
\cite{Zhong} when $x$ is arbitrary---as the closure $\overline{\Sigma}_{t}$ of
a certain domain $\Sigma_{t}.$ Then by results of \cite{Zhong}, $\mathrm{supp}%
(\mathrm{Br}_{x+g_{t,\gamma}})$ is the image of $\mathrm{supp}(\mathrm{Br}%
_{x+c_{t}})$ under a certain explicit map $\Phi_{t,\gamma}.$ See Section
\ref{arbPlusElliptic.sec} for more information.
\end{remark}

The preceding theorems will be proven in Section \ref{addProofs.sec}, in the
following stages. We will start by analyzing $x+c_{t}$ in the self-adjoint
case and then extend the arguments to $x+c_{t}$ where $x$ is not self-adjoint.
Finally, for general $x,$ we will connect the case $x+g_{t,\gamma}$ to the
case $x+c_{t}.$

\subsection{Multiplicative case\label{mult1.sec}}

We begin by giving a nonrigorous motivation for the model will introduce.
Using (\ref{sumElliptic}), we can see that, for any $k,$
\begin{equation}
g_{t,\gamma}\overset{d}{=}\frac{g_{t,\gamma}^{1}}{\sqrt{k}}+\cdots
+\frac{g_{t,\gamma}^{k}}{\sqrt{k}}, \label{gg}%
\end{equation}
where $g_{t,\gamma}^{1},\ldots,g_{t,\gamma}^{k}$ are freely independent copies
of $g_{t,\gamma}.$ We then make a \textquotedblleft
multiplicative\textquotedblright\ model by exponentiating, but where in the
noncommutative setting, the correct way to exponentiate is to exponentiate the
terms on the right-hand side of (\ref{gg}) separately and then multiply the
results. Thus, we may consider
\begin{equation}
\exp\left\{  \frac{ig_{t,\gamma}^{1}}{\sqrt{k}}\right\}  \exp\left\{
\frac{ig_{t,\gamma}^{2}}{\sqrt{k}}\right\}  \ldots\exp\left\{  \frac
{ig_{t,\gamma}^{k}}{\sqrt{k}}\right\}  . \label{zerothApprox}%
\end{equation}
Here the factor $i$ in the exponent is just a convention, which will give a
nicer match of the parameters between the additive and multiplicative cases.
(Note that $ig_{t,\gamma}$ is again an elliptic element, with parameters $t$
and $-\gamma.$)

For large $k,$ we may reasonably hope to approximate each exponential in
(\ref{zerothApprox}) using the Taylor series of the exponential through the
quadratic order, considering instead%
\begin{equation}
\left(  1+\frac{ig_{t,\gamma}^{1}}{\sqrt{k}}-\frac{(g_{t,\gamma}^{1})^{2}}%
{2k}\right)  \left(  1+\frac{ig_{t,\gamma}^{2}}{\sqrt{k}}-\frac{(g_{t,\gamma
}^{2})^{2}}{2k}\right)  \cdots\left(  1+\frac{ig_{t,\gamma}^{k}}{\sqrt{k}%
}-\frac{(g_{t,\gamma}^{k})^{2}}{2k}\right)  . \label{firstApprox}%
\end{equation}
Now, for large $k,$ the term involving $(g_{t,\gamma}^{j})^{2}$ will be
smaller than the term involving $g_{t,\gamma}^{j}$, because it has $k$ rather
than $\sqrt{k}$ in the denominator. Nevertheless, the $(g_{t,\gamma}^{j})^{2}$
term is not negligible compared to the $g_{t,\gamma}^{j}$ term, because the
$g_{t,\gamma}^{j}$ term has mean zero, while the $(g_{t,\gamma}^{j})^{2}$
generally has nonzero mean. We expect, however, that we can replace
$(g_{t,\gamma}^{j})^{2}$ by $\mathrm{tr}[(g_{t,\gamma}^{j})^{2}]=\gamma$ in
the large-$k$ limit, giving another model that should have the same large-$k$
behavior:%
\begin{equation}
\left(  1+\frac{ig_{t,\gamma}^{1}}{\sqrt{k}}-\frac{\gamma}{2k}\right)  \left(
1+\frac{ig_{t,\gamma}^{2}}{\sqrt{k}}-\frac{\gamma}{2k}\right)  \cdots\left(
1+\frac{ig_{t,\gamma}^{k}}{\sqrt{k}}-\frac{\gamma}{2k}\right)  .
\label{secondApprox}%
\end{equation}
To motivate the change from (\ref{firstApprox}) to (\ref{secondApprox}), we
can compute that for a natural random matrix approximation $g_{t,\gamma}^{N}$
to $g_{t,\gamma},$ we have
\[
\mathbb{E}\{(g_{t,\gamma}^{N})^{2}\}=\mathbb{E}\{\mathrm{tr}[(g_{t,\gamma
,}^{N})^{2}]\}I,
\]
where, here, $\mathrm{tr}$ denotes the normalized trace of a matrix.

At the rigorous level, we may define an elliptic Brownian motion $w_{t,\gamma
}(r)$ by replacing the semicircular elements $x$ and $y$ in (\ref{gdef}) by
\textbf{semicircular Brownian motions} $x_{r}$ and $y_{r},$ that is,
continuous processes with freely independent, semicircular increments. Then we
may consider a free stochastic differential equation (\ref{fSDE}) based on
(\ref{secondApprox}) as%
\begin{equation}
db_{t,\gamma}(r)=b_{t}\left(  i~dw_{t,\gamma}(r)-\frac{\gamma}{2}~dr\right)
,\quad b_{t,\gamma}(0)=1, \label{fSDE}%
\end{equation}
where the $dr$ term is an It\^{o} correction. We then define the \textbf{free
multiplicative Brownian motion} with parameters $t$ and $\gamma$ as
$b_{t,\gamma}$ as the value of $b_{t,\gamma}(r)$ at $r=1$:%
\begin{equation}
b_{t,\gamma}=\left.  b_{t,\gamma}(r)\right\vert _{r=1}. \label{btgamma}%
\end{equation}
See Section 2.1 in \cite{HallHo2} for more information, where the parameter
$s$ in \cite{HallHo2} corresponds to $t$ here, while the parameter $\tau$ in
\cite{HallHo2} corresponds to $t-\gamma$ here. We refer to $b_{t,\gamma}$ as a
\textquotedblleft multiplicative\textquotedblright\ model, since it satisfies
the multiplicative counterpart of (\ref{sumElliptic}):%
\begin{equation}
b_{t_{1},\gamma_{1}}b_{t_{2},\gamma_{2}}\overset{d}{=}b_{t_{1}+t_{2}%
,\gamma_{1}+\gamma_{2}.} \label{multElliptic}%
\end{equation}
See Theorem 4.3 in \cite{HallHo2}.

The expression (\ref{zerothApprox}) represents a Wong--Zakai approximation (as
in \cite{WZ} or \cite{Tw}) to $b_{t,\gamma}(1)$, obtained by making a
piecewise-linear approximation $w_{t,\gamma}^{(k)}$ to the Brownian motion
$w_{t,\gamma}$ and then solving (\ref{fSDE}) with $w_{t,\gamma}$ replaced by
$w_{t,\gamma}^{(k)}$ (but without the It\^{o} correction term). The expression
(\ref{secondApprox}) then represents a more numerically tractable
approximation to $b_{t,\gamma}(1).$

In the case $\gamma=0,$ the free multiplicative Brownian motion was introduced
by Biane. See \cite[Section 4.2]{BianeJFA}, where what we are calling the free
multiplicative Brownian motion (with $\gamma=0$) is denoted $\Lambda_{t}.$ We
use a special notation for the $\gamma=0$ case:
\begin{equation}
b_{t}=b_{t,0}. \label{bt}%
\end{equation}
Meanwhile, the case $\gamma=t$ corresponds to Biane's free unitary Brownian
motion $u_{t}$, introduced in \cite{BianeFields}:%
\[
b_{t,t}=u_{t}.
\]
The general form of the free multiplicative Brownian motion was introduced by
Hall and Ho in \cite{HallHo2}, where $s$ and $\tau$ in \cite{HallHo2}
corresponds to $t$ and $t-\gamma,$ respectively, here.

Hall and Kemp showed that the support of Brown measure of $b_{t}$ is contained
in the closure of a certain set $\Sigma_{t}$, which was introduced by Biane in
\cite[Section 4.2.6]{BianeJFA}. Driver, Hall, and Kemp \cite{DHK} then
computed the Brown measure of $b_{t}$ and showed that its support is exactly
$\overline{\Sigma}_{t}.$ Ho and Zhong \cite[Section 4]{HZ} extended the
results of \cite{DHK} to compute the Brown measure of $ub_{t},$ where $u$ is a
unitary element that is freely independent of $b_{t}$. Finally, Hall and Ho
computed the Brown measure of $ub_{t,\gamma}$ for general $t$ and $\gamma.$
See Figures \ref{quad.fig} and \ref{quadpush.fig}.

Meanwhile, Demni and Hamdi \cite{DemniHamdi} studied the unitary Brownian
motion $u_{t}$ multiplied by a \textit{non-negative self-adjoint} initial
condition $x$ (freely independent of $u_{t}$). In the case that $x$ is a
self-adjoint projection, they identified a natural domain $\Sigma_{t}$ and
showed that the support of the Brown measure of $xu_{t}$ is contained in
$\{0\}\cup\overline{\Sigma}_{t}.$ Eaknipitsari and Hall \cite{EH} then
extended the results of \cite{DemniHamdi} to the case of $xb_{t,\gamma},$
where $x$ is a non-negative self-adjoint element freely independent of
$b_{t,\gamma}.$

We now obtain information about the spectrum of $ub_{t,\gamma}$ and
$xb_{t,\gamma}.$%

\begin{figure}[ptb]
\centering
\includegraphics[scale=0.6]{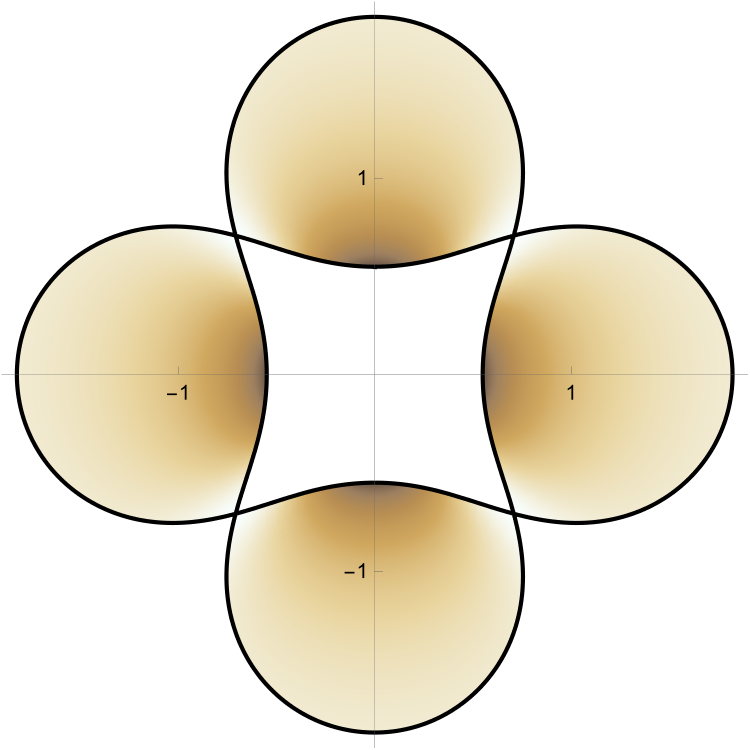}
\caption{Density plot of Brown measure of $ub_{t},$ where $u$ is unitary with
$\mu_{u}$ supported at the fourth roots of unity, with equal mass, for $t=1.$}
\label{quad.fig}
\end{figure}

\begin{theorem}
Let $b_{t}$ be the free multiplicative Brownian motion in (\ref{bt}) and let
$u$ be a unitary element that is freely independent of $b_{t}.$ Then for all
$t>0,$ we have%
\[
\sigma(ub_{t})=\mathrm{supp}(\mathrm{Br}_{ub_{t}}).
\]
More generally, for any $t>0$ and $\gamma\in\mathbb{C}$ with $\left\vert
\gamma\right\vert \leq t,$ we have%
\[
\sigma(ub_{t,\gamma})=\mathrm{supp}(\mathrm{Br}_{ub_{t,\gamma}}).
\]

\end{theorem}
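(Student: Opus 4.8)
The strategy is to apply Theorem~\ref{main.thm} together with the PDE for the regularized log potential $S(\lambda,\varepsilon)$ of $ub_{t,\gamma}$ developed in \cite{HZ} and \cite{HallHo2}. Since $\mathrm{Br}_{ub_{t,\gamma}}$ is always supported on $\sigma(ub_{t,\gamma})$, only the reverse inclusion $\sigma(ub_{t,\gamma})\subseteq\mathrm{supp}(\mathrm{Br}_{ub_{t,\gamma}})$ requires proof; equivalently, I must show that every $\lambda$ \emph{outside} the (already computed) Brown support lies outside the spectrum. By Theorem~\ref{main.thm}, it suffices to prove that for each such $\lambda$, the map $\varepsilon\mapsto\partial S/\partial\varepsilon(\lambda,\varepsilon)$ extends real-analytically across $\varepsilon=0$. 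The plan is therefore: (i) recall the PDE satisfied by $S(r,\lambda,\varepsilon)$ along the flow in the time parameter $r$ (with $b_{t,\gamma}$ obtained at $r=1$), in which $\varepsilon$ appears as an active variable and whose characteristic curves were used in \cite{HallHo2} to compute the Brown measure; (ii) for $\lambda$ outside the Brown support, solve the PDE by the method of characteristics starting from $r=0$, where the initial condition is $S(0,\lambda,\varepsilon)=\mathrm{tr}[\log(|u-\lambda|^{2}+\varepsilon)]$; (iii) observe that since $u$ is unitary and $\lambda$ is off the Brown support (which in particular contains the spectrum of $u$ scaled appropriately—or more precisely, the relevant domain is the closure of the explicitly described region), the relevant resolvent quantities are analytic in $\varepsilon$ near $0$ at $r=0$, and the characteristic flow preserves this analyticity up to $r=1$; (iv) conclude via Theorem~\ref{main.thm}.

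More concretely, the known Brown-measure computation proceeds by showing that for $\lambda$ in the complement of $\mathrm{supp}(\mathrm{Br}_{ub_{t,\gamma}})$, the characteristic curve emanating from $(\lambda,\varepsilon)$ at time $r=1$ can be traced back to time $r=0$ without the $\varepsilon$-coordinate hitting the boundary $\varepsilon=0$ prematurely, and in fact one can take $\varepsilon=0$ along the whole characteristic; this is exactly the statement that $s(\lambda)$ is harmonic there, so $\mathrm{Br}_{ub_{t,\gamma}}$ puts no mass near $\lambda$. I would upgrade this: rather than just setting $\varepsilon=0$, I keep $\varepsilon$ in a complex neighborhood of $0$ and show the characteristic system has a solution depending holomorphically on $\varepsilon$. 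The initial data $\varepsilon\mapsto\partial S/\partial\varepsilon(0,\lambda,\varepsilon)=\mathrm{tr}[(|u-\lambda|^{2}+\varepsilon)^{-1}]$ is real-analytic at $\varepsilon=0$ precisely when $\lambda\notin\sigma(u)$—but we need it only for $\lambda$ off the Brown support of $ub_{t,\gamma}$, and one checks (using $u$ unitary and the geometry of the domain from \cite{HZ,HallHo2}) that such $\lambda$ indeed avoid $\sigma(u)$, or more robustly that the combination appearing in the characteristic ODEs is analytic regardless. Then standard holomorphic dependence of ODE solutions on parameters propagates the analyticity to $r=1$, giving a real-analytic extension of $\partial S/\partial\varepsilon(\lambda,\cdot)$ past $\varepsilon=0$.

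The main obstacle I anticipate is step (iii): controlling the characteristic flow uniformly as $\varepsilon$ ranges over a complex disk and ensuring the flow does not develop singularities (characteristics crossing, or the lifted curve leaving the domain of definition of $S$) before $r=1$. For real $\varepsilon>0$ this is guaranteed because $S$ is globally smooth; the delicate point is extending to $\varepsilon$ slightly negative or complex while staying off the Brown support, where one must use the specific structure of the \cite{DHK}/\cite{HallHo2} characteristic equations—in particular that outside $\mathrm{supp}(\mathrm{Br}_{ub_{t,\gamma}})$ the "exit time" of the characteristic is bounded away from $1$—and an implicit-function/continuity argument to keep this true under small complex perturbation of $\varepsilon$. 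Once this uniform control is in hand, the remaining steps are routine: holomorphic dependence on parameters for ODEs, matching the extended function with $\partial S/\partial\varepsilon$ on $(0,\delta)$, and invoking Theorem~\ref{main.thm}. The case $\gamma=0$ (Biane's $b_{t}$) is the special case $u=1$ combined with general unitary $u$, and the statement for $b_{t}$ alone follows by taking $u=1$.
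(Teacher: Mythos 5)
Your strategy---use the Hamilton--Jacobi characteristic analysis of the relevant PDE to extend $\partial S/\partial\varepsilon$ analytically past $\varepsilon=0$, then invoke Theorem~\ref{main.thm}---is exactly what the paper does. But the paper does it in two stages rather than your single flow in $r$: first it handles the $\gamma=0$ case $ub_t$ via the $t$-PDE~(\ref{multSpde}), and then it deforms from $\gamma=0$ to general $\gamma$ using the separate $\gamma$-PDE~(\ref{multSPDE1}) together with the push-forward map $\Psi_{t,\gamma}$. The two-stage structure matters: for $\gamma\neq 0$ one does not directly check that a given $z$ off the Brown support of $ub_{t,\gamma}$ satisfies $z\notin\sigma(u)$ (this is generally false); instead one shows $z=\Psi_{t,\gamma}(\lambda)$ for a unique $\lambda\notin\overline{\Sigma}_t$, and it is \emph{that} $\lambda$ which avoids $\sigma(u)$. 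Getting such a $\lambda$ requires both the injectivity of $\Psi_{t,\gamma}$ on $(\overline{\Sigma}_t)^c$ (Lemma~\ref{injPsi.lem}) and a surjectivity/topological argument (the multiplicative analogue of Lemma~\ref{PhiSurj.lem}); your proposal doesn't address either, and without them the ``trace back the characteristic to $r=0$'' step is not justified.

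Your anticipated obstacle in step (iii) is also real, and the paper flags it explicitly. The formula for the blowup time of the Hamiltonian system (from \cite[Proposition 5.9]{DHK}) is only valid for $\varepsilon_0\geq 0$; for $\varepsilon_0$ slightly negative or complex, the system may blow up earlier. The paper circumvents this by appealing to the general openness of the domain of a flow (rather than the explicit lifetime formula) to conclude that the lifetime exceeds $t$ for $(\lambda_0,\varepsilon_0)$ near $(\lambda,0)$, after first establishing $T(\lambda)>t$ (Lemma~\ref{multTgreater.lem}) and $\lambda\notin\sigma(u)$ (Lemma~\ref{multSpecContain.lem}). It then uses the inverse function theorem on the map $(\lambda_0,\varepsilon_0)\mapsto(\lambda(t),\varepsilon(t))$ (or $(\lambda_0,\varepsilon_0)\mapsto(\lambda(\gamma),\varepsilon(\gamma))$ in the second stage) rather than a direct complex-$\varepsilon$ continuation of the ODE system, though the two viewpoints are equivalent. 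In short: same method, but you would need to supply the push-forward machinery, the three lemmas on $T$ and $\Sigma_t$, and the correct treatment of the lifetime for $\varepsilon_0<0$ to make the argument complete.
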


For more detailed statements, see Theorem \ref{ubtSpec.thm} in the case of
$ub_{t}$ and Theorem \ref{multEquality.thm} in the case of $ub_{t,\gamma}.$ As
in the additive case, the proof of the theorem relies on the prior computation
of the Brown support, in \cite{DHK} for the case of $b_{t}$ itself, in
\cite[Section 4]{HZ} for $ub_{t},$ and in \cite{HallHo2} for $ub_{t,\gamma}$.

The case of a non-negative initial condition $x$ \cite{DemniHamdi,EH} is
conceptually similar to the case of a unitary initial condition, but more
algebraically complicated. The algebraic complications prevent a rigorous
computation of the Brown measure of $xb_{t,\gamma}.$ But \cite{EH} shows that
the \textit{support} of the Brown measure of $xb_{t,\gamma}$ is contained in
$\{0\}\cup D_{t,\gamma}$ for a certain closed set $\gamma.$ We then show that
\textquotedblleft most\textquotedblright\ points outside $\{0\}\cup
D_{t,\gamma}$ are outside the spectrum of $xb_{t,\gamma}.$ Precise statements
may be found in Section \ref{positive.sec}.%

\begin{figure}[ptb]
\centering
\includegraphics[scale=0.6]{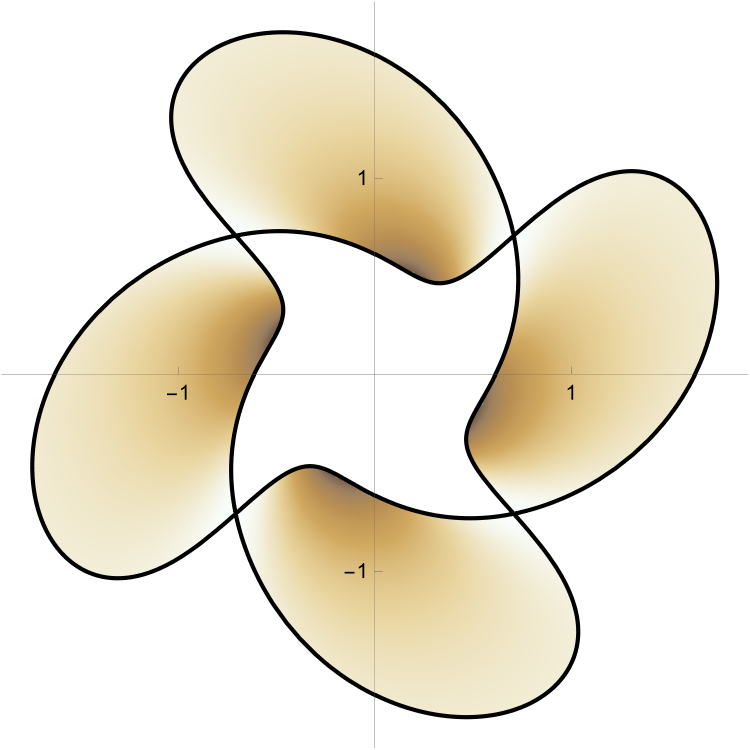}
\caption{Density plot of Brown measure of $ub_{t,\gamma},$ where $u$ is
unitary with $\mu_{u}$ supported at the fourth roots of unity, with equal
mass, for $t=1$ and $\gamma=-i/2.$}
\label{quadpush.fig}
\end{figure}

\subsection{Verification of Example \ref{specBrownSupport.ex}%
\label{exampleCheck.sec}}

Let $x=uh$ be as in the example, in which case Example \ref{hl.ex} applies to
$x.$ We will show that Example \ref{hl.ex} continues to apply to $x+c_{t},$
for sufficiently small $t.$ To do this, we will show that $0$ is in the
spectrum of $\left\vert x+c_{t}\right\vert $ but that $\left\vert
x+c_{t}\right\vert $ has an $L^{2}$ inverse.

For a measure $\mu$ on $\mathbb{R},$ we let $\tilde{\mu}$ be the
symmetrization of $\mu,$ that is, the average of $\mu$ and its push-forward
under the map $\xi\mapsto-\xi.$ We first note that the symmetrization of
$\mu_{|c_{t}|}$ is the semicircular law $\mathrm{sc}_{t}$ of variance $t$
\cite[p. 174]{MingoSpeicher}. By Proposition 3.5 of \cite{HaagerupLarsen},
$x+c_{t}$ is $R$-diagonal and the law of $|x+c_{t}|$, determined by its
symmetrization, is the free convolution
\begin{equation}
\tilde{\mu}_{|x+c_{t}|}=\tilde{\mu}_{h}\boxplus\mathrm{sc}_{t}%
.\label{RdiagSum}%
\end{equation}
Now, by our assumption on $h$ and Proposition \ref{normalSupport.prop}, 0 is
in the support of $\mu_{h}$ and therefore also in the support of $\tilde{\mu
}_{h}.$ It then follows from Proposition 2.2 in \cite{Cap} that 0 is in the
support of $\tilde{\mu}_{h}\boxplus\mathrm{sc}_{t}$ for all $t>0.$ (Taking
$x=0$ in the cited proposition, the symmetry of the measures involved means
that the quantity $u$ in the proposition must be zero. Then since 0 is in the
support of $\tilde{\mu}_{h},$ the proposition tells us that $0$ is in the
support of $\tilde{\mu}_{h}\boxplus\mathrm{sc}_{t}.$) Thus, by (\ref{RdiagSum}%
), 0 is in the support of $\tilde{\mu}_{|x+c_{t}|}$ and thus, also, in the
support of $\mu_{|x+c_{t}|}.$ Using Proposition \ref{normalSupport.prop}
again, we conclude that 0 is in the spectrum of $\left\vert x+c_{t}\right\vert
.$

We now show that $\left\vert x+c_{t}\right\vert $ has an $L^{2}$ inverse, for
sufficiently small $t,$ using results of Biane \cite{BianeFreeHeat} computing
measures of the form $\mu\boxplus\mathrm{sc}_{t}.$ Consider the function
$v_{t}$ defined by%
\[
v_{t}(x)=\inf_{u\geq0}\left\{  y\geq0\left\vert \int_{\mathbb{R}}\frac
{1}{(x-\xi)^{2}+y^{2}}~d\tilde{\mu}_{h}(\xi)\leq\frac{1}{t}\right.  \right\}
,
\]
where $v_{t}$ is continuous by \cite[Lemma 2]{BianeFreeHeat}. Then
\begin{align}
v_{t}(x)  &  =0\Longleftrightarrow\int_{\mathbb{R}}\frac{1}{(x-\xi)^{2}%
}~d\tilde{\mu}_{h}(\xi)\leq\frac{1}{t}\nonumber\\
&  \Longleftrightarrow t\leq\left\Vert h^{-1}\right\Vert _{2}^{-2}.
\label{vtZero}%
\end{align}

We then define an open set $\Omega_{t}$ inside the upper half-plane as the set
of $x+iy$ with $y>v_{t}(x)$. Then define a holomorphic function on the upper
half-plane by%
\[
H_{t}(z)=z+tG_{\tilde{\mu}_{h}}(z),
\]
where $G_{\mu}$ denotes the Cauchy transform of a measure $\mu.$ According to
Lemma 4 of \cite{BianeFreeHeat}, $H_{t}$ maps $\Omega_{t}$ injectively onto
the upper half-plane. Furthermore, by (\ref{RdiagSum}) and \cite[Proposition
2]{BianeFreeHeat}, we have that
\begin{equation}
G_{\tilde{\mu}_{|x+c_{t}|}}(H_{t}(z))=G_{\tilde{\mu}_{h}}(z)
\label{bianeRelation}%
\end{equation}
for all $z\in\Omega_{t}.$

Now, since $\tilde{\mu}_{h}$ is symmetric,
\begin{equation}
G_{\tilde{\mu}_{h}}(i\varepsilon)=\frac{1}{2}\int_{\mathbb{R}}\left(  \frac
{1}{i\varepsilon-\xi}+\frac{1}{i\varepsilon+\xi}\right)  ~d\tilde{\mu}_{h}%
(\xi)=-i\varepsilon\int_{\mathbb{R}}\frac{1}{\xi^{2}+\varepsilon^{2}}%
d\tilde{\mu}_{h}(\xi), \label{gmuiEpsilon}%
\end{equation}
so that, by monotone convergence,
\[
\lim_{\varepsilon\rightarrow0^{+}}\frac{G_{\tilde{\mu}_{h}}(i\varepsilon
)}{-i\varepsilon}=\int_{\mathbb{R}}\frac{1}{\xi^{2}}d\tilde{\mu}_{h}%
(\xi)=\left\Vert x^{-1}\right\Vert _{2}^{2}.
\]

Using (\ref{gmuiEpsilon}), we calculate that
\[
H_{t}(i\varepsilon)=i\varepsilon\left(  1-t\int_{\mathbb{R}}\frac{1}{\xi
^{2}+\varepsilon^{2}}d\tilde{\mu}_{h}(\xi)\right)  .
\]
Since $\int_{\mathbb{R}}\frac{1}{\xi^{2}}~d\tilde{\mu}_{h}(\xi)$ is finite by
assumption, we see that $H_{t}(i\varepsilon)\rightarrow0$ as $\varepsilon
\rightarrow0^{+}.$ Furthermore, for $t<\Vert x^{-1}\Vert_{2}^{-2}$, we see
from (\ref{vtZero}) that $i\varepsilon$ is in $\Omega_{t}$ for all
$\varepsilon>0.$ Thus, using (\ref{bianeRelation})\ and letting $\varepsilon
\rightarrow0^{+},$ we obtain%
\begin{align*}
\Vert(x+c_{t})^{-1}\Vert_{2}^{2}  &  =\lim_{\varepsilon\rightarrow0^{+}}%
\frac{G_{\tilde{\mu}_{|x+c_{t}|}}(H_{t}(i\varepsilon))}{-H_{t}(i\varepsilon
)}\\
&  =\lim_{\varepsilon\rightarrow0^{+}}\frac{G_{\tilde{\mu}_{h}}(i\varepsilon
)}{-i\varepsilon}\frac{i\varepsilon}{H_{t}(i\varepsilon)}\\
&  =\lim_{\varepsilon\rightarrow0^{+}}\frac{G_{\tilde{\mu}_{h}}(i\varepsilon
)}{-i\varepsilon}\frac{1}{1-t\frac{G_{\tilde{\mu}_{h}}(i\varepsilon
)}{-i\varepsilon}}\\
&  =\Vert x^{-1}\Vert_{2}^{2}\frac{1}{1-t\Vert x^{-1}\Vert_{2}^{2}}.
\end{align*}
This shows that the inner radius of the Brown support of $x+c_{t}$ is
\[
\Vert(x+c_{t})^{-1}\Vert_{2}^{-1}=\sqrt{\Vert x^{-1}\Vert_{2}^{-2}-t},
\]
which is positive for all $t<\Vert x^{-1}\Vert_{2}^{-2}$.

\section{Additive case\label{addProofs.sec}}

In this section, we provide more detailed statements and proofs for the
results stated in Section \ref{addStatements.sec}.

\subsection{The self-adjoint plus circular case\label{saPlusCirc.sec}}

Ho and Zhong \cite[Section 3]{HZ} compute the Brown measure of an element of
the form $x+c_{t},$ where $x$ is self-adjoint, $c_{t}$ is circular of variance
$t,$ and $x$ and $c_{t}$ are freely independent. We now introduce some of the
PDE techniques from \cite{HZ} that we will use to prove a result about the
spectrum of $x+c_{t}.$ (See also \cite{PDE} for a gentle introduction to the
PDE method.) We consider the regularized log potential of $x+c_{t}$, as in
(\ref{sDef}), which we write as
\begin{equation}
S(t,\lambda,\varepsilon)=\mathrm{tr}[\log(\left\vert x+c_{t}-\lambda
\right\vert ^{2}+\varepsilon)],\quad\lambda\in\mathbb{C},~\varepsilon>0.
\label{Sadditive}%
\end{equation}
According to \cite[Proposition 3.2]{HZ}, $S$ satisfies the PDE%
\begin{equation}
\frac{\partial S}{\partial t}=\varepsilon\left(  \frac{\partial S}%
{\partial\varepsilon}\right)  ^{2} \label{PDEadditive}%
\end{equation}
with the initial condition%
\begin{equation}
S(0,\lambda,\varepsilon)=\mathrm{tr}[\log(\left\vert x-\lambda\right\vert
^{2}+\varepsilon)]. \label{PDEinitial}%
\end{equation}
Note that no derivatives with respect to $\lambda$ appear, so we really have a
PDE\ in $\varepsilon$ and $t,$ with $\lambda$ entering as a parameter in the
initial conditions.

The PDE (\ref{PDEadditive}) is a first-order, nonlinear PDE of
Hamilton--Jacobi type. We now briefly recap the method of characteristics as
it applies to this equation. See Section 3.3 of the book \cite{Evans} of Evans
and Section 5.1 of \cite{DHK} for more information. We introduce a
\textquotedblleft Hamiltonian\textquotedblright\ function by replacing
$\partial S/\partial\varepsilon$ on the right-hand side of (\ref{PDEadditive})
with a \textquotedblleft momentum\textquotedblright\ variable $p_{\varepsilon
},$ with an overall minus sign:
\[
H(\varepsilon,p_{\varepsilon})=-\varepsilon p_{\varepsilon}^{2}.
\]
We then consider Hamilton's equations for this Hamiltonian, meaning that we
look for curves $\varepsilon(t)$ and $p_{\varepsilon}(t)$ satisfying%
\begin{align}
\frac{d\varepsilon}{dt}  &  =\frac{\partial H}{\partial p_{\varepsilon}
}(\varepsilon(t),p_{\varepsilon}(t))=-2\varepsilon(t)p_{\varepsilon}%
(t)\quad\label{Ham1}\\
\frac{dp_{\varepsilon}}{dt}  &  =-\frac{\partial H}{\partial\varepsilon
}(\varepsilon(t),p_{\varepsilon}(t))=p_{\varepsilon}(t)^{2}. \label{Ham2}%
\end{align}
The initial condition $\varepsilon_{0}$ for $\varepsilon(t)$ is an arbitrary
positive number,%
\[
\varepsilon(0)=\varepsilon_{0},
\]
while the initial condition $p_{\varepsilon,0}$ for $p_{\varepsilon}(t)$ is
obtained from the idea that the momentum variable $p_{\varepsilon}$
corresponds to $\partial S/\partial\varepsilon$:%
\begin{equation}
p_{\varepsilon,0}=\frac{\partial S}{\partial\varepsilon}(0,\lambda
,\varepsilon_{0})=\mathrm{tr}[(\left\vert x-\lambda\right\vert ^{2}%
+\varepsilon_{0})^{-1}]. \label{InitialMomentum}%
\end{equation}

A curve of the form $t\mapsto\varepsilon(t)$, for some choice of
$\varepsilon_{0},$ is called a \textbf{characteristic curve} of the PDE
(\ref{PDEadditive}) with the initial condition (\ref{PDEinitial}). We then
have the first and second Hamilton--Jacobi formulas. These assert that that if
a solution to (\ref{Ham1})--(\ref{Ham2}), with initial momentum given by
(\ref{InitialMomentum}), exists with $\varepsilon(t)>0$ up to some time
$t_{\ast},$ then for all $t<t_{\ast},$ we have%
\begin{align}
S(t,\lambda,\varepsilon(t))  &  =S(0,\lambda,\varepsilon_{0})+tH(\varepsilon
_{0},p_{\varepsilon,0})\label{HJ1}\\
\frac{\partial S}{\partial\varepsilon}(t,\lambda,\varepsilon(t))  &
=p_{\varepsilon}(t). \label{HJ2}%
\end{align}
The initial condition (\ref{InitialMomentum}) ensures that the second
Hamilton--Jacobi formula (\ref{HJ2}) holds at $t=0.$ Since we are interested
in $\partial S/\partial\varepsilon,$ the second Hamilton--Jacobi formula will
be more useful to us than the first.

Now, we can solve (\ref{Ham2}) as a separable equation, then plug the result
into (\ref{Ham1}). Then (\ref{Ham1}) becomes separable as well, and we obtain
the explicit formulas%
\begin{align}
\varepsilon(t)  &  =\varepsilon_{0}(1-tp_{\varepsilon,0})^{2}%
\label{epsilonOfT}\\
p_{\varepsilon}(t)  &  =\frac{p_{\varepsilon,0}}{1-tp_{\varepsilon,0}}.
\label{pOfT}%
\end{align}
Once (\ref{epsilonOfT}) is established, (\ref{pOfT}) is equivalent to the
statement that%
\begin{equation}
\sqrt{\varepsilon(t)}~p_{\varepsilon}(t)=\sqrt{\varepsilon_{0}}p_{\varepsilon
,0}. \label{COM}%
\end{equation}

Note that when $t$ approaches the time
\begin{equation}
t_{\ast}(\lambda,\varepsilon_{0})=\frac{1}{p_{\varepsilon,0}}=\frac
{1}{\mathrm{tr}[(\left\vert x-\lambda\right\vert ^{2}+\varepsilon_{0})^{-1}]},
\label{tstarDef}%
\end{equation}
the solution of the system will cease to exist, because $p_{\varepsilon}(t)$
will approach infinity. We call $t_{\ast}(\lambda,\varepsilon_{0})$ the
\textbf{lifetime} of the solution (\ref{epsilonOfT})--(\ref{pOfT}) to
Hamilton's equations (\ref{Ham1})--(\ref{Ham2}).

Now, our goal is to understand the behavior of $\partial S/\partial
\varepsilon$ near $\varepsilon=0,$ for a fixed $\lambda,$ using the
Hamilton--Jacobi formulas (\ref{HJ1})--(\ref{HJ2}). We therefore want to see
what choice of initial condition $\varepsilon_{0}$ (where the value of
$p_{\varepsilon,0}$ is determined by $\varepsilon_{0}$ as in
(\ref{InitialMomentum})) will cause $\varepsilon(t)$ to be close to zero. Now,
if we simply let $\varepsilon_{0}$ approach zero in (\ref{epsilonOfT}), then
$\varepsilon(t)$ will also approach zero---\textit{provided} that the lifetime
$t_{\ast}(\lambda,\varepsilon_{0})$ is at least $t$ in the limit as
$\varepsilon_{0}$ tends to zero. If on the other hand, the $\varepsilon
_{0}\rightarrow0$ limit of $t_{\ast}(\lambda,\varepsilon_{0})$ is less than
$t,$ it does not make sense to apply the Hamilton--Jacobi formula at time $t$
with $\varepsilon_{0}$ close to 0.

The preceding discussion leads us to consider the limit of $t_{\ast}%
(\lambda,\varepsilon_{0})$ as $\varepsilon_{0}\rightarrow0,$ as follows:%
\begin{equation}
T(\lambda)=\lim_{\varepsilon_{0}\rightarrow0^{+}}t_{\ast}(\lambda
,\varepsilon_{0})=\frac{1}{\mathrm{tr}[\left\vert x-\lambda\right\vert ^{-2}%
]}, \label{Tdef}%
\end{equation}
where $\mathrm{tr}[\left\vert x-\lambda\right\vert ^{-2}]$ is interpreted as
in (\ref{limdSdEpsilon2}). Since $x$ is self-adjoint, we can also write%
\begin{equation}
\mathrm{tr}[\left\vert x-\lambda\right\vert ^{-2}]=\int_{\mathbb{R}}\frac
{1}{\left\vert \xi-\lambda\right\vert ^{2}}~d\mu_{x}(\xi). \label{Neg2Moment}%
\end{equation}
The quantity $\mathrm{tr}[\left\vert x-\lambda\right\vert ^{-2}]$ cannot be
zero but will be infinite for certain values of $\lambda.$ Thus, $T$ cannot be
infinite but is zero when $\mathrm{tr}[\left\vert x-\lambda\right\vert ^{-2}]$
is infinite. We then introduce a domain $\Sigma_{t}$ as%
\begin{equation}
\Sigma_{t}=\left\{  \left.  \lambda\in\mathbb{C}\right\vert T(\lambda
)<t\right\}  . \label{SigmaDef}%
\end{equation}
We anticipate that the strategy of letting $\varepsilon_{0}\rightarrow0$ will
work outside the closure of $\Sigma_{t}.$

We now quote three technical results that we will need; their proof is given
at the end of this subsection.

\begin{lemma}
\label{upperSemi.lem}The function $T$ is upper semicontinuous on $\mathbb{C}$
and therefore the set $\Sigma_{t}$ is open.
\end{lemma}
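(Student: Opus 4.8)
The plan is to deduce upper semicontinuity of $T$ from lower semicontinuity of its reciprocal. Write
\[
F(\lambda)=\mathrm{tr}[\left\vert x-\lambda\right\vert ^{-2}]=\int_{\mathbb{R}}\frac{1}{\left\vert \xi-\lambda\right\vert ^{2}}\,d\mu_{x}(\xi)\in(0,\infty],
\]
so that $T(\lambda)=1/F(\lambda)$, using the convention $1/\infty=0$ that is consistent with the discussion following (\ref{Tdef}). First I would record the elementary bound $F(\lambda)\geq(\left\vert \lambda\right\vert +\left\Vert x\right\Vert )^{-2}>0$, which comes from the fact that $\mu_{x}$ is a probability measure supported on the bounded set $\sigma(x)\subseteq[-\left\Vert x\right\Vert ,\left\Vert x\right\Vert ]$; this re-confirms that $T$ is finite everywhere, as already noted in the text.

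The main step is to show that $F$ is lower semicontinuous. Fix $\lambda\in\mathbb{C}$ and a sequence $\lambda_{n}\to\lambda$. For each fixed $\xi\in\mathbb{R}$ the map $\mu\mapsto\left\vert \xi-\mu\right\vert ^{-2}$ is continuous as a function into $(0,\infty]$, so $\liminf_{n}\left\vert \xi-\lambda_{n}\right\vert ^{-2}\geq\left\vert \xi-\lambda\right\vert ^{-2}$ (with equality whenever $\xi\neq\lambda$). Since the integrands are nonnegative, Fatou's lemma gives
\[
\liminf_{n\to\infty}F(\lambda_{n})=\liminf_{n\to\infty}\int_{\mathbb{R}}\frac{d\mu_{x}(\xi)}{\left\vert \xi-\lambda_{n}\right\vert ^{2}}\geq\int_{\mathbb{R}}\Bigl(\liminf_{n\to\infty}\frac{1}{\left\vert \xi-\lambda_{n}\right\vert ^{2}}\Bigr)\,d\mu_{x}(\xi)\geq F(\lambda),
\]
which is precisely lower semicontinuity of $F$ at $\lambda$.

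Passing to reciprocals then yields upper semicontinuity of $T$, which I would spell out in two cases. If $F(\lambda_{0})=\infty$, then $T(\lambda_{0})=0$ while lower semicontinuity forces $F(\lambda)\to\infty$ as $\lambda\to\lambda_{0}$, hence $T(\lambda)\to0=T(\lambda_{0})$. If $F(\lambda_{0})<\infty$, then
\[
\limsup_{\lambda\to\lambda_{0}}T(\lambda)=\limsup_{\lambda\to\lambda_{0}}\frac{1}{F(\lambda)}\leq\frac{1}{\liminf_{\lambda\to\lambda_{0}}F(\lambda)}\leq\frac{1}{F(\lambda_{0})}=T(\lambda_{0}).
\]
In both cases $\limsup_{\lambda\to\lambda_{0}}T(\lambda)\leq T(\lambda_{0})$, i.e., $T$ is upper semicontinuous. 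Finally, the set $\Sigma_{t}=\{\lambda:T(\lambda)<t\}$ from (\ref{SigmaDef}) is open: if $T(\lambda_{0})<t$, upper semicontinuity produces a neighborhood of $\lambda_{0}$ on which $T<t$.

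Because the argument is short, there is no serious obstacle; the only points requiring a little care are the treatment of $\left\vert \xi-\lambda\right\vert ^{-2}$ as a $(0,\infty]$-valued function near $\xi=\lambda$ (which matters only for real $\lambda$) and the bookkeeping with the values $0$ and $+\infty$ in the reciprocal step. If one prefers to avoid extended-real-valued integrands altogether, an alternative is to write $F(\lambda)=\sup_{M>0}\int_{\mathbb{R}}\min(M,\left\vert \xi-\lambda\right\vert ^{-2})\,d\mu_{x}(\xi)$ and apply dominated convergence to each bounded truncation before taking the supremum.
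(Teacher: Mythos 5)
Your proof is correct, but it takes a genuinely different route from the paper's. The paper observes that $t_{\ast}(\lambda,\varepsilon_{0})=1/\mathrm{tr}[(\left\vert x-\lambda\right\vert ^{2}+\varepsilon_{0})^{-1}]$ is continuous in $\lambda$ for each fixed $\varepsilon_{0}>0$ and decreases to $T(\lambda)$ as $\varepsilon_{0}\downarrow 0$, then invokes the standard fact that a decreasing limit of continuous functions is upper semicontinuous. You instead prove lower semicontinuity of the reciprocal $F(\lambda)=\mathrm{tr}[\left\vert x-\lambda\right\vert ^{-2}]$ directly, via Fatou's lemma applied to the spectral-integral representation $F(\lambda)=\int_{\mathbb{R}}\left\vert \xi-\lambda\right\vert ^{-2}\,d\mu_{x}(\xi)$, and then pass to reciprocals with a clean case split; your bound $F\geq(\left\vert \lambda\right\vert +\left\Vert x\right\Vert)^{-2}>0$ correctly secures the denominator. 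Both arguments are sound and short. One thing worth noting: your route leans on the representation of $F$ as an integral against the fixed spectral measure $\mu_{x}$ on $\mathbb{R}$, which uses that $x$ is self-adjoint; the paper later (Section \ref{arbPlusCirc.sec}) reuses this lemma \emph{verbatim} for non-self-adjoint $x$, where that representation is unavailable but the monotone-regularization argument still goes through unchanged (equivalently, one can write $F(\lambda)=\sup_{\varepsilon_{0}>0}\mathrm{tr}[(\left\vert x-\lambda\right\vert ^{2}+\varepsilon_{0})^{-1}]$ as a supremum of continuous functions, which is essentially what the paper's argument amounts to). Your truncation alternative $F(\lambda)=\sup_{M>0}\int\min(M,\left\vert \xi-\lambda\right\vert ^{-2})\,d\mu_{x}$ is closer in spirit to the paper's proof and more portable, so if you care about the later reuse, that variant is the one to keep.
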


Recall that a real-valued function $f$ on a metric space is said to be upper
semicontinuous if for all $x,$%
\[
\limsup_{y\rightarrow x}f(y)\leq f(x).
\]

\begin{lemma}
\label{specContain.lem}If $x$ is self-adjoint, then for all $t>0,$ the
spectrum of $x$ is contained in $\overline{\Sigma}_{t}.$
\end{lemma}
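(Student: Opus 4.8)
The plan is to argue by contraposition, using Proposition \ref{normalSupport.prop} (which identifies $\sigma(x)$ with $\mathrm{supp}(\mu_{x})$) and Lemma \ref{upperSemi.lem} (which tells us $\Sigma_{t}$ is open, so $\overline{\Sigma}_{t}$ is closed). Since $x$ is self-adjoint, $\sigma(x)\subseteq\mathbb{R}$ and $\mu_{x}$ is supported on $\mathbb{R}$; hence it suffices to show that any \emph{real} $\lambda_{0}$ lying outside $\overline{\Sigma}_{t}$ also lies outside $\mathrm{supp}(\mu_{x})$.

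Fix such a $\lambda_{0}$. Since $\overline{\Sigma}_{t}$ is closed, there is $r>0$ with $D_{\lambda_{0}}(r)\cap\Sigma_{t}=\emptyset$; that is, $T(\lambda)\geq t$ for all $\lambda\in D_{\lambda_{0}}(r)$. For real $\lambda$ we have $\left\vert \xi-\lambda\right\vert ^{2}=(\xi-\lambda)^{2}$, so (\ref{Tdef}) and (\ref{Neg2Moment}) give, for every real $\lambda\in(\lambda_{0}-r,\lambda_{0}+r)$,
\[
\int_{\mathbb{R}}\frac{d\mu_{x}(\xi)}{(\xi-\lambda)^{2}}=\frac{1}{T(\lambda)}\leq\frac{1}{t}<\infty .
\]
The key step is to average this estimate over $\lambda$ and swap the order of integration. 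Integrating over $\lambda\in(\lambda_{0}-r,\lambda_{0}+r)$ and applying Tonelli's theorem to the nonnegative integrand $(\xi,\lambda)\mapsto(\xi-\lambda)^{-2}$ yields
\[
\int_{\mathbb{R}}\left(\int_{\lambda_{0}-r}^{\lambda_{0}+r}\frac{d\lambda}{(\xi-\lambda)^{2}}\right)d\mu_{x}(\xi)=\int_{\lambda_{0}-r}^{\lambda_{0}+r}\left(\int_{\mathbb{R}}\frac{d\mu_{x}(\xi)}{(\xi-\lambda)^{2}}\right)d\lambda\leq\frac{2r}{t}<\infty .
\]
But for every $\xi\in(\lambda_{0}-r,\lambda_{0}+r)$ the inner integral $\int_{\lambda_{0}-r}^{\lambda_{0}+r}(\xi-\lambda)^{-2}\,d\lambda$ diverges, since $(\xi-\lambda)^{-2}$ is not integrable near $\lambda=\xi$. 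For the double integral above to be finite we must therefore have $\mu_{x}\big((\lambda_{0}-r,\lambda_{0}+r)\big)=0$. Thus $\lambda_{0}$ has a neighborhood of $\mu_{x}$-measure zero, so $\lambda_{0}\notin\mathrm{supp}(\mu_{x})=\sigma(x)$, completing the contrapositive.

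I expect the only genuinely subtle point to be the realization that one cannot finish by examining $T$ at $\lambda_{0}$ itself: the quantity $\mathrm{tr}[\,\left\vert x-\lambda_{0}\right\vert ^{-2}\,]=\int(\xi-\lambda_{0})^{-2}\,d\mu_{x}(\xi)$ can be finite (so that $T(\lambda_{0})>0$) even when $\lambda_{0}\in\mathrm{supp}(\mu_{x})$, for instance when $\mu_{x}$ has a density vanishing to high order at $\lambda_{0}$. It is the hypothesis $\lambda_{0}\notin\overline{\Sigma}_{t}$---which provides the bound $T\geq t$ on an entire neighborhood of $\lambda_{0}$, not just at the point---combined with the Tonelli averaging trick that forces $\mu_{x}$ to vanish near $\lambda_{0}$. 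The remaining steps are routine measure theory.
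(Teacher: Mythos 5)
Your proof is correct, and it takes a genuinely more self-contained route than the paper's. The paper proves the lemma by citing either the first paragraph of the proof of Theorem 3.8 in Ho--Zhong or Lemma 4.5 of Zhong, the latter of which asserts that $\int |\xi-\lambda|^{-2}\,d\mu_x(\xi)=\infty$ for $\mu_x$-almost-every $\lambda$; this immediately gives $T(\lambda)=0$ almost everywhere, so $\Sigma_t$ has full $\mu_x$-measure and $\overline{\Sigma}_t$ must contain $\mathrm{supp}(\mu_x)$. Your Tonelli averaging argument is, in effect, a direct elementary proof of exactly the consequence of that cited lemma that is needed here (localized to a neighborhood in $\mathbb{R}$), so you avoid the external citation altogether. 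The two proofs turn on the same phenomenon --- that $\int (\xi-\lambda)^{-2}\,d\mu_x(\xi)$ cannot stay bounded on a set of positive $\mu_x$-measure --- but you run it contrapositively (finite on a neighborhood implies no mass there) whereas the paper runs it affirmatively (infinite a.e.\ on the support). Your final remark about why examining $T$ at $\lambda_0$ alone is insufficient is exactly right and is the reason the neighborhood bound plus the averaging trick is needed; this is a useful observation that the paper does not spell out.

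One small stylistic point: you only need Tonelli over the real interval $(\lambda_0-r,\lambda_0+r)$, not the full two-dimensional disk $D_{\lambda_0}(r)$, and you handle this correctly by restricting to real $\lambda$ from the start. The reduction to real $\lambda_0$ is valid because $\mathrm{supp}(\mu_x)\subset\mathbb{R}$ when $x$ is self-adjoint, so containment of $\sigma(x)$ in $\overline{\Sigma}_t$ only needs to be checked for real points.
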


\begin{lemma}
\label{Tgreater.lem}If $x$ is self-adjoint, then for all $t>0$ and $\lambda$
outside of $\overline{\Sigma}_{t},$ we have $T(\lambda)>t.$
\end{lemma}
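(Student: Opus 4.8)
The plan is to exploit the identity $T=1/g$, where $g(\lambda)=\mathrm{tr}[\left\vert x-\lambda\right\vert^{-2}]$, and the fact that $g$ is \emph{strictly} subharmonic on $\mathbb{C}\setminus\sigma(x)$, so that $g$ can have no local maximum there. Since the hypothesis $\lambda\notin\overline{\Sigma}_{t}$ gives the bound $g\leq 1/t$ on a whole \emph{neighborhood} of $\lambda$ (not just at $\lambda$), the absence of local maxima will force that bound to be strict at $\lambda$ itself, which is exactly the assertion $T(\lambda)>t$.

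In detail, I would fix $\lambda\notin\overline{\Sigma}_{t}$. By Lemma \ref{specContain.lem} we have $\sigma(x)\subseteq\overline{\Sigma}_{t}$, so $\lambda\notin\sigma(x)$, and therefore
\[
g(\mu)=\int_{\mathbb{R}}\frac{1}{\left\vert\xi-\mu\right\vert^{2}}\,d\mu_{x}(\xi)
\]
(see (\ref{Neg2Moment})) is finite, positive, and $C^{\infty}$ on the open set $\mathbb{C}\setminus\sigma(x)$, with $T=1/g$ there; differentiation under the integral sign is justified because $\mathrm{dist}(\mu,\sigma(x))$ is bounded away from $0$ on compact subsets and $\mu_{x}$ is a probability measure. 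Since $\mathbb{C}\setminus\overline{\Sigma}_{t}$ is open, I choose an open neighborhood $U$ of $\lambda$ with $U\cap\overline{\Sigma}_{t}=\emptyset$; then $U\subseteq\mathbb{C}\setminus\sigma(x)$ and $U\cap\Sigma_{t}=\emptyset$, so $T(\mu)\geq t$, i.e.\ $g(\mu)\leq 1/t$, for every $\mu\in U$.

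Next I would record the Laplacian computation $\Delta_{\mu}\bigl(\left\vert\xi-\mu\right\vert^{-2}\bigr)=4\left\vert\xi-\mu\right\vert^{-4}$ for $\mu\neq\xi$, which after differentiating under the integral sign gives
\[
\Delta g(\mu)=4\int_{\mathbb{R}}\frac{1}{\left\vert\xi-\mu\right\vert^{4}}\,d\mu_{x}(\xi)>0\qquad\text{for }\mu\in\mathbb{C}\setminus\sigma(x),
\]
the strict inequality holding because $\mu_{x}$ has total mass $1$. Now suppose for contradiction that $T(\lambda)=t$, i.e.\ $g(\lambda)=1/t$; since $g\leq 1/t$ throughout $U$, the function $g$ attains a local maximum at $\lambda$, so its Hessian at $\lambda$ is negative semidefinite and hence $\Delta g(\lambda)\leq 0$, contradicting the displayed inequality. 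Therefore $T(\lambda)>t$. I do not expect a serious obstacle here; the only steps deserving attention are the appeal to Lemma \ref{specContain.lem} (which guarantees $\lambda$ lies in the region where $g$ is smooth, so that the local-maximum argument applies) and the routine justification of differentiating under the integral sign to obtain $\Delta g>0$.
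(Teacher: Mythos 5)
Your proof is correct and follows essentially the same route as the paper: both invoke Lemma \ref{specContain.lem} to place $\lambda$ outside $\sigma(x)$, both show $1/T$ is strictly subharmonic there (you compute $\Delta g$ by differentiating the spectral integral, the paper differentiates the trace expression $\mathrm{tr}[(x-\lambda)^{-1}(x^{\ast}-\bar\lambda)^{-1}]$ using the derivative-of-inverse formula — for self-adjoint $x$ these are the same computation written two ways), and both finish by arguing that $1/T$ cannot have a weak local maximum at a point where it equals $1/t$.
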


Ho and Zhong show that for $\lambda$ outside $\overline{\Sigma}_{t},$ we can
let $\varepsilon_{0}\rightarrow0^{+}$ in (\ref{HJ1}), with the result that
$\varepsilon(t)\rightarrow0$ as well, giving%
\[
\lim_{\varepsilon\rightarrow0^{+}}S(t,\lambda,\varepsilon)=S(0,\lambda
,0)=\mathrm{tr}[\log(\left\vert x-\lambda\right\vert ^{2})],
\]
where $\mathrm{tr}[\log(\left\vert x-\lambda\right\vert ^{2})]$ is well
defined and harmonic for $\lambda$ outside $\bar{\Sigma}_{t},$ by Lemma
\ref{specContain.lem}. Thus, the Brown measure is zero outside $\overline
{\Sigma}_{t}.$ There is a different analysis in \cite[Section 3.2.2]{HZ} to
actually compute the Brown measure, inside $\Sigma_{t}$, but this does not
concern us here---except for the result \cite[Theorem 3.13]{HZ} that the
support of $\mathrm{Br}_{x+c_{t}}$ is equal to (not just contained in)
$\overline{\Sigma}_{t}.$

We now refine the preceding analysis to show that points $\lambda$ outside
$\overline{\Sigma}_{t}$ are outside the spectrum of $x+c_{t}.$

\begin{theorem}
\label{saPlusCirc.thm}Let $c_{t}$ be circular of variance $t$, let $x$ be
self-adjoint and freely independent of $c_{t},$ and consider the function $S$
in (\ref{Sadditive}). Then for each $\lambda$ outside of $\overline{\Sigma
}_{t},$ the function $\frac{\partial S}{\partial\varepsilon}(t,\lambda
,\varepsilon)$ is analytic at $\varepsilon=0.$ Thus, by Theorem \ref{main.thm}%
, the spectrum of $x+c_{t}$ is contained in $\overline{\Sigma}_{t}.$ Since
\cite[Theorem 3.13]{HZ} tells us that the support of $\mathrm{Br}_{x+c_{t}}$
is exactly $\overline{\Sigma}_{t}$ (and since the Brown measure of any element
is always supported on its spectrum), we conclude that the spectrum of
$x+c_{t}$ coincides with its Brown support.
\end{theorem}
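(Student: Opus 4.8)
The plan is to use the method of characteristics to produce, for each fixed $\lambda\notin\overline{\Sigma}_t$, an explicit real-analytic formula for $\varepsilon\mapsto\frac{\partial S}{\partial\varepsilon}(t,\lambda,\varepsilon)$ valid in a full neighborhood of $\varepsilon=0$, and then invoke Theorem~\ref{main.thm}. The key point is that the characteristic flow (\ref{epsilonOfT})--(\ref{pOfT}) is invertible near $\varepsilon_0=0$: by Lemma~\ref{Tgreater.lem} we have $T(\lambda)>t$, i.e.\ $t\,p_{\varepsilon,0}<1$ in the limit $\varepsilon_0\to 0^+$, so for $\varepsilon_0$ in a neighborhood of $0$ the factor $(1-tp_{\varepsilon,0})^2$ in (\ref{epsilonOfT}) stays bounded away from $0$. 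Thus $\varepsilon_0\mapsto\varepsilon(t)=\varepsilon_0(1-tp_{\varepsilon,0})^2$ is a smooth (indeed real-analytic) map with nonvanishing derivative at $\varepsilon_0=0$, hence locally invertible; inverting it and substituting into the second Hamilton--Jacobi formula (\ref{HJ2}) together with (\ref{pOfT}) expresses $\frac{\partial S}{\partial\varepsilon}(t,\lambda,\cdot)$ as a composition of real-analytic functions of $\varepsilon$ near $0$.

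First I would verify that $p_{\varepsilon,0}=\mathrm{tr}[(\lvert x-\lambda\rvert^2+\varepsilon_0)^{-1}]$ is real-analytic in $\varepsilon_0$ on a neighborhood of $0$. Since $\lambda\notin\overline{\Sigma}_t$ and (by Lemma~\ref{specContain.lem}, as $\overline{\Sigma}_s\subseteq\overline{\Sigma}_t$ is not quite what is needed---rather one uses $\lambda\notin\overline{\Sigma}_t$ directly) the quantity $\mathrm{tr}[\lvert x-\lambda\rvert^{-2}]=1/T(\lambda)$ is finite with $T(\lambda)>t$, the positive element $\lvert x-\lambda\rvert^2$ need not be bounded below, so $\varepsilon=0$ need not be outside its spectrum a priori; however, the Cauchy-transform argument already used in the proof of Theorem~\ref{main.thm} is not available here because we do not yet know analyticity of $p_{\varepsilon,0}$. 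So instead I would argue that $\lambda\notin\overline{\Sigma}_t$ and self-adjointness of $x$ force $\lambda$ to lie outside the spectrum of $x$: indeed by Lemma~\ref{specContain.lem} the spectrum of $x$ is contained in $\overline{\Sigma}_t$, so $\lvert x-\lambda\rvert^2$ is bounded below by a positive constant, hence $p_{\varepsilon,0}$ is real-analytic in $\varepsilon_0$ near $0$ by the Neumann-series expansion as in the converse direction of Theorem~\ref{main.thm}.

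Next I would assemble the pieces: set $F(\varepsilon_0)=\varepsilon_0(1-tp_{\varepsilon,0}(\varepsilon_0))^2$, a real-analytic function on a neighborhood of $0$ with $F(0)=0$ and $F'(0)=(1-t/T(\lambda))^2>0$ by Lemma~\ref{Tgreater.lem}; by the real-analytic inverse function theorem $F$ has a real-analytic local inverse $\varepsilon_0=F^{-1}(\varepsilon)$ near $\varepsilon=0$, and this inverse maps a neighborhood of $0$ onto a neighborhood of $0$, so in particular positive $\varepsilon$ correspond to positive $\varepsilon_0$ (matching the regime where the Hamilton--Jacobi formula (\ref{HJ2}) was derived). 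Then for $\varepsilon>0$ small,
\[
\frac{\partial S}{\partial\varepsilon}(t,\lambda,\varepsilon)=p_\varepsilon(t)=\frac{p_{\varepsilon,0}(F^{-1}(\varepsilon))}{1-t\,p_{\varepsilon,0}(F^{-1}(\varepsilon))},
\]
and the right-hand side is a composition of real-analytic functions, real-analytic on a neighborhood of $\varepsilon=0$ since the denominator equals $1-t/T(\lambda)\neq 0$ at $\varepsilon=0$. This gives the required real-analytic extension, so Theorem~\ref{main.thm} applies and $\lambda\notin\sigma(x+c_t)$; combined with $\mathrm{supp}(\mathrm{Br}_{x+c_t})=\overline{\Sigma}_t$ from \cite[Theorem 3.13]{HZ} and the general containment of the Brown support in the spectrum, this yields $\sigma(x+c_t)=\mathrm{supp}(\mathrm{Br}_{x+c_t})$.

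The main obstacle I anticipate is a subtle one about the \emph{validity} of the second Hamilton--Jacobi formula along the relevant characteristics: formula (\ref{HJ2}) was asserted under the hypothesis that the solution $(\varepsilon(\cdot),p_\varepsilon(\cdot))$ exists with $\varepsilon(\cdot)>0$ up to time $t_\ast>t$, and while Lemma~\ref{Tgreater.lem} gives $T(\lambda)>t$ in the $\varepsilon_0\to0$ limit, I must ensure that the lifetime $t_\ast(\lambda,\varepsilon_0)=1/p_{\varepsilon,0}(\varepsilon_0)$ exceeds $t$ \emph{uniformly} for $\varepsilon_0$ in a neighborhood of $0$ (not merely in the limit), and that $\varepsilon(s)>0$ throughout $[0,t]$---the latter being automatic from (\ref{epsilonOfT}) once $1-tp_{\varepsilon,0}\neq0$. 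By continuity of $\varepsilon_0\mapsto p_{\varepsilon,0}$ and the strict inequality $p_{\varepsilon,0}(0)=1/T(\lambda)<1/t$, such a neighborhood exists, so this obstacle is manageable but should be stated carefully; a secondary subtlety is to confirm that the analytic inverse $F^{-1}$ does not leave the region $\varepsilon_0>0$ for $\varepsilon>0$, which follows since $F(0)=0$, $F$ is increasing near $0$, and $F$ maps $(0,\text{small})$ into $(0,\text{small})$.
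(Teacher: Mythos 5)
Your proof is correct and follows essentially the same route as the paper's: you invoke Lemma~\ref{specContain.lem} to conclude $\lambda\notin\sigma(x)$ (hence $p_{\varepsilon,0}$ extends analytically to $\varepsilon_0<0$ via the Neumann series), use Lemma~\ref{Tgreater.lem} to get $1-t\tilde{p}_{\varepsilon,0}>0$ so that $\varepsilon_0\mapsto\varepsilon(t)$ has nonvanishing derivative at $0$, apply the real-analytic inverse function theorem, and compose with (\ref{pOfT}) to produce the required analytic extension of $\partial S/\partial\varepsilon$. Your explicit attention to why the Hamilton--Jacobi formula remains valid uniformly for $\varepsilon_0$ near $0$ (lifetime $1/p_{\varepsilon,0}>t$ by continuity) and why $F^{-1}$ preserves the sign of $\varepsilon$ makes the argument slightly more detailed than the paper's, but it is the same proof.
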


\begin{proof}
We apply the second Hamilton--Jacobi formula (\ref{HJ2}) and the formula
(\ref{pOfT})\ for $p_{\varepsilon}(t)$ to get%
\begin{equation}
\frac{\partial S}{\partial\varepsilon}(t,\lambda,\varepsilon(t))=\frac
{p_{\varepsilon,0}}{1-tp_{\varepsilon,0}}, \label{HJexplicit}%
\end{equation}
where $p_{\varepsilon,0}$ is computed as a function of $\lambda$ and
$\varepsilon_{0}$ by (\ref{InitialMomentum}). We now fix some $\lambda$
outside $\overline{\Sigma}_{t}.$ We will first show that the right-hand side
of (\ref{HJexplicit}) makes sense even when $\varepsilon_{0}$ is slightly
negative. Then we will invert the relationship between $\varepsilon_{0}$ and
$\varepsilon=\varepsilon(t)$ near $\varepsilon_{0}=\varepsilon=0$ and plug the
result into (\ref{HJexplicit}) to obtain the desired analytic extension of
$\partial S/\partial\varepsilon.$

We now fix some $\lambda$ outside $\overline{\Sigma}_{t}.$ By Lemma
\ref{specContain.lem}, $\lambda$ is outside the spectrum of $x$ and therefore
$\left\vert x-\lambda\right\vert ^{2}$ is invertible. In that case,%
\[
p_{\varepsilon,0}=\mathrm{tr}[(\left\vert x-\lambda\right\vert ^{2}%
+\varepsilon_{0})^{-1}]
\]
is actually well defined even when $\varepsilon_{0}$ is slightly negative.
Thus, the map $\varepsilon_{0}\mapsto\varepsilon(t)$ is well defined and
analytic in a neighborhood of $\varepsilon_{0}=0.$ Let us use the notation
\[
\tilde{p}_{\varepsilon,0}=\lim_{\varepsilon_{0}\rightarrow0^{+}}%
p_{\varepsilon,0}=\mathrm{tr}[\left\vert x-\lambda\right\vert ^{-2}]=\frac
{1}{T(\lambda)}.
\]
Now, $T(\lambda)>t$ by Lemma \ref{Tgreater.lem}, which means that $\tilde
{p}_{\varepsilon,0}<1/t,$ so that $1-t\tilde{p}_{\varepsilon,0}>0.$ Thus,
$\varepsilon_{0}\mapsto p_{\varepsilon}(t)$ is also well defined beyond
$\varepsilon_{0}=0.$ Also,
\begin{align*}
\left.  \frac{\partial\varepsilon(t)}{\partial\varepsilon_{0}}\right\vert
_{\varepsilon_{0}=0}  &  =\left.  \left[  (1-tp_{\varepsilon,0})^{2}%
+2\varepsilon_{0}(1-tp_{\varepsilon,0})\right]  \right\vert _{\varepsilon
_{0}=0}\\
&  =(1-t\tilde{p}_{\varepsilon,0})^{2}\\
&  >0.
\end{align*}

Thus, by the inverse function theorem, the map $\varepsilon_{0}\mapsto
\varepsilon(t)$ has an analytic inverse map $E_{t}$ defined near 0. We may
therefore construct an analytic function $f$ defined on $(-\delta,\delta)$ by%
\[
f(\varepsilon)=\left.  \frac{p_{\varepsilon,0}}{1-tp_{\varepsilon,0}%
}\right\vert _{\varepsilon_{0}=E_{t}(\varepsilon)}.
\]
By (\ref{HJexplicit}), this function agrees with $\frac{\partial S}%
{\partial\varepsilon}(t,\lambda,\varepsilon)$ for $\varepsilon\in(0,\delta),$
so that $f$ gives the desired analytic extension.
\end{proof}

We now supply the proof of Lemmas \ref{upperSemi.lem}, \ref{specContain.lem}
and \ref{Tgreater.lem}.

\begin{proof}
[Proof of Lemma \ref{upperSemi.lem}]The function $t_{\ast}(\lambda
,\varepsilon_{0})$ in (\ref{tstarDef}) is continuous in $\lambda$ for
$\varepsilon_{0}>0$. As $\varepsilon_{0}$ decreases to 0, $t_{\ast}%
(\lambda,\varepsilon_{0})$ decreases to $T(\lambda).$ It then follows from an
elementary result (e.g., \cite[Theorem 15.84]{Yeh}) that $T$ is upper
semicontinuous and therefore that $\Sigma_{t}$ is an open set.
\end{proof}

\begin{proof}
[Proof of Lemma \ref{specContain.lem}]Since $x$ is self-adjoint and therefore
normal, we can apply Proposition \ref{normalSupport.prop} to conclude that the
spectrum of $x$ coincides with the support of the law $\mu_{x}$ of $x.$ It
then follows from the first paragraph of the proof of Theorem 3.8 in \cite{HZ}
that $\mathrm{supp}(\mu_{x})$ is contained in $\overline{\Sigma}_{t}$ for
every $t.$ We can give different proof of this last statement as follows.
Since $x$ is self-adjoint, $\mathrm{tr}[\left\vert x-\lambda\right\vert
^{-2}]$ can be computed as in (\ref{Neg2Moment}). (Compare how this quantity
would be computed for general $x$ in (\ref{limdSdEpsilon2}).)\ Then by Lemma
4.5 in \cite{Zhong}, the right-hand side of (\ref{Neg2Moment}) is infinite for
$\mu_{x}$-almost-every $\lambda\in\mathbb{C}.$ It follows from the definition
(\ref{Tdef}) of $T$ that $T(\lambda)=0$ for $\mu_{x}$-almost-every $\lambda.$
Thus, by the definition (\ref{SigmaDef}) of $\Sigma_{t}$, we have that
$\mu_{x}$-almost-every $\lambda$ is in $\Sigma_{t}.$ That is, $\Sigma_{t}$ is
a set of full measure for $\mu_{x}$ and $\overline{\Sigma}_{t}$ is then a
\textit{closed} set of full measure, which must contain $\mathrm{supp}(\mu
_{x}).$
\end{proof}

\begin{proof}
[Proof of Lemma \ref{Tgreater.lem}]We will show that for all $\lambda$ outside
the spectrum of $x,$ we have%
\begin{equation}
\Delta\left(  \frac{1}{T(\lambda)}\right)  =4\mathrm{tr}[\left\vert
(x-\lambda)^{2}\right\vert ^{-2}]>0. \label{LaplaceForm}%
\end{equation}
Then, in light of Lemma \ref{specContain.lem}, (\ref{LaplaceForm}) will hold
for all $\lambda$ outside $\overline{\Sigma}_{t}.$ We will conclude that,
outside $\overline{\Sigma}_{t},$ the function $1/T$ cannot have a weak local
maximum and the function $T$ cannot have a weak local minimum. Now, for
$\lambda$ outside $\overline{\Sigma}_{t},$ we certainly have $T(\lambda)\geq
t.$ If $T(\lambda)$ were equal to $t,$ then at all nearby points
$\lambda^{\prime},$ we would have $T(\lambda^{\prime})\geq t,$ or else
$\lambda$ would be in the closure of $\Sigma_{t}.$ But then $\lambda$ would be
a weak local minimum for $T,$ which we have shown to be impossible.

We now verify (\ref{LaplaceForm}). Fix $\lambda$ outside the spectrum of $x$
and use (\ref{Tdef}) to write%
\[
\frac{1}{T(\lambda)}=\mathrm{tr}[\left\vert x-\lambda\right\vert
^{-2}]=\mathrm{tr}[(x-\lambda)^{-1}(x^{\ast}-\bar{\lambda})^{-1}].
\]
Now, by the standard formula for the derivative of the inverse (e.g.,
\cite[Equation (24)]{PDE}), we have%
\begin{align*}
\frac{\partial}{\partial\lambda}(x-\lambda)^{-1}  &  =-(x-\lambda)^{-1}\left(
\frac{\partial}{\partial\lambda}(x-\lambda)\right)  (x-\lambda)^{-1}%
=(x-\lambda)^{-2}\\
\frac{\partial}{\partial\bar{\lambda}}(x-\lambda)^{-1}  &  =-(x-\lambda
)^{-1}\left(  \frac{\partial}{\partial\bar{\lambda}}(x-\lambda)\right)
(x-\lambda)^{-1}=0,
\end{align*}
with similar formulas for the derivative of $(x^{\ast}-\bar{\lambda})^{-1}.$
Thus, differentiating under the trace, we get%
\begin{align*}
\Delta\mathrm{tr}[(x-\lambda)^{-1}(x^{\ast}-\bar{\lambda})^{-1}]  &
=4\frac{\partial^{2}}{\partial\bar{\lambda}\partial\lambda}\mathrm{tr}%
[(x-\lambda)^{-1}(x^{\ast}-\bar{\lambda})^{-1}]\\
&  =\mathrm{tr}[(x-\lambda)^{-2}(x^{\ast}-\bar{\lambda})^{-2}],
\end{align*}
as claimed.
\end{proof}

\subsection{Arbitrary plus circular\label{arbPlusCirc.sec}}

We now consider the circular case of Zhong's paper, $x+c_{t}$, where $x$ is
freely independent of $c_{t}$ but otherwise arbitrary. We consider the
function $T$ and the domain $\Sigma_{t}$ as in (\ref{Tdef}) and
(\ref{SigmaDef}), but where we no longer assume that $x$ is self-adjoint.
Lemma \ref{upperSemi.lem} still holds, with the same proof. But in this
generality, our methods \textit{do not} allow us to prove Lemma
\ref{specContain.lem}---that the spectrum of $x$ is inside $\overline{\Sigma
}_{t}$. Thus, the proof of Theorem \ref{saPlusCirc.thm} breaks down at this
point. Indeed, the \textit{conclusion} of Theorem \ref{saPlusCirc.thm} is
false for general $x,$ as Example \ref{specBrownSupport.ex} shows. In the
example, the spectrum of $x$ is, by Example \ref{hl.ex}, a disk. But for small
$t,$ the closed domain $\overline{\Sigma}_{t}$ is an annulus, so that
$\sigma(x)$ is not contained in $\overline{\Sigma}_{t}.$

What we \textit{can} prove is the following.

\begin{theorem}
\label{2cond.thm}Let $c_{t}$ be a circular element of variance $t,$ let $x$ be
another element (not necessarily self-adjoint) that is freely independent of
$c_{t}.$ For all $\lambda\in\mathbb{C},$ if (1) $\lambda$ is outside the
spectrum of $x,$ and (2) $T(\lambda)>t,$ then $\lambda$ is outside the
spectrum of $x+c_{t}.$
\end{theorem}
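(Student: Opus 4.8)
The plan is to reduce Theorem \ref{2cond.thm} to Theorem \ref{main.thm}, by showing that under hypotheses (1) and (2) the map $\varepsilon\mapsto\frac{\partial S}{\partial\varepsilon}(t,\lambda,\varepsilon)$ admits a real-analytic extension to a neighborhood of $\varepsilon=0$. The argument I would use is essentially the proof of Theorem \ref{saPlusCirc.thm}: in that proof the self-adjointness of $x$ was used only through Lemmas \ref{specContain.lem} and \ref{Tgreater.lem}, which served precisely to extract the two facts ``$\lambda\notin\sigma(x)$'' and ``$T(\lambda)>t$'' from the hypothesis $\lambda\notin\overline{\Sigma}_{t}$. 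Since these two facts are now taken as hypotheses, the rest of that proof can be rerun, provided one first checks that the PDE (\ref{PDEadditive}) and its characteristics solution (\ref{epsilonOfT})--(\ref{pOfT}) from Section \ref{saPlusCirc.sec} remain valid for arbitrary $x$ freely independent of $c_{t}$; the derivation of \cite[Proposition 3.2]{HZ} uses only the free-independence structure of $x+c_{t}$ and not self-adjointness of $x$.

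Concretely, I would proceed in the following steps. Since $\lambda\notin\sigma(x)$, the operator $|x-\lambda|^{2}$ is invertible, so $p_{\varepsilon,0}=\mathrm{tr}[(|x-\lambda|^{2}+\varepsilon_{0})^{-1}]$ extends (via the Neumann series, exactly as in the converse direction of Theorem \ref{main.thm}) to a real-analytic function of $\varepsilon_{0}$ near $0$, with limit $\tilde{p}_{\varepsilon,0}:=\mathrm{tr}[|x-\lambda|^{-2}]=1/T(\lambda)$ as $\varepsilon_{0}\to0^{+}$. Next, the lifetime $t_{\ast}(\lambda,\varepsilon_{0})=1/p_{\varepsilon,0}$ of (\ref{tstarDef}) increases with $\varepsilon_{0}$ and decreases to $T(\lambda)$ as $\varepsilon_{0}\to0^{+}$, so by hypothesis (2) we have $t_{\ast}(\lambda,\varepsilon_{0})\geq T(\lambda)>t$ for every $\varepsilon_{0}>0$; hence the characteristic curve exists with $\varepsilon(s)>0$ for $s$ up to $t_{\ast}(\lambda,\varepsilon_{0})$, and since $t$ lies below that lifetime the second Hamilton--Jacobi formula (\ref{HJ2}), combined with (\ref{pOfT}), gives $\frac{\partial S}{\partial\varepsilon}(t,\lambda,\varepsilon(t))=p_{\varepsilon,0}/(1-tp_{\varepsilon,0})$, i.e. (\ref{HJexplicit}). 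Because $\tilde{p}_{\varepsilon,0}=1/T(\lambda)<1/t$ we have $1-t\tilde{p}_{\varepsilon,0}>0$, so both $\varepsilon(t)=\varepsilon_{0}(1-tp_{\varepsilon,0})^{2}$ and $p_{\varepsilon,0}/(1-tp_{\varepsilon,0})$ are real-analytic in $\varepsilon_{0}$ near $0$, with $\left.\partial\varepsilon(t)/\partial\varepsilon_{0}\right\vert_{\varepsilon_{0}=0}=(1-t\tilde{p}_{\varepsilon,0})^{2}>0$. The inverse function theorem then provides a real-analytic inverse $E_{t}$ of $\varepsilon_{0}\mapsto\varepsilon(t)$ near $0$, and $f(\varepsilon):=\left[p_{\varepsilon,0}/(1-tp_{\varepsilon,0})\right]_{\varepsilon_{0}=E_{t}(\varepsilon)}$ is real-analytic on some $(-\delta,\delta)$ and agrees with $\frac{\partial S}{\partial\varepsilon}(t,\lambda,\varepsilon)$ on $(0,\delta)$. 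Theorem \ref{main.thm} then gives $\lambda\notin\sigma(x+c_{t})$, as desired.

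The main obstacle is the point flagged in the first paragraph: verifying that the PDE (\ref{PDEadditive}), the explicit characteristics (\ref{epsilonOfT})--(\ref{pOfT}), and the Hamilton--Jacobi formulas (\ref{HJ1})--(\ref{HJ2}) are valid for $x+c_{t}$ with $x$ not assumed self-adjoint; this is where I would re-examine the derivation in \cite{HZ} (or invoke Zhong \cite{Zhong} for the general-$x$ version of the underlying identity). A secondary, purely bookkeeping point is to confirm that one may let $\varepsilon_{0}$ decrease through $0$ without the lifetime $t_{\ast}(\lambda,\varepsilon_{0})$ dropping to or below $t$, which follows from the monotonicity of $\varepsilon_{0}\mapsto t_{\ast}(\lambda,\varepsilon_{0})$ and hypothesis (2); everything else is a routine repetition of the proof of Theorem \ref{saPlusCirc.thm}.
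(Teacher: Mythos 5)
Your proposal is correct and takes essentially the same approach as the paper: it observes that the PDE (\ref{PDEadditive}) and its Hamilton--Jacobi analysis do not require $x$ self-adjoint, and that in the proof of Theorem \ref{saPlusCirc.thm} the self-adjointness entered only via Lemmas \ref{specContain.lem} and \ref{Tgreater.lem}, whose conclusions you now take as hypotheses (1) and (2). The only superficial difference is that the paper first deduces $\lambda\notin\overline{\Sigma}_{t}$ from (1), (2), and the continuity of $T$ off $\sigma(x)$ before citing the earlier proof, whereas you feed (1) and (2) directly into the earlier argument; this is a cosmetic variation, not a different route.
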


\begin{proof}
Although the paper \cite{HZ} assumes that the element $x$ is self-adjoint, the
derivation of the PDE (\ref{PDEadditive}) does not use this assumption. We may
therefore attempt to follow the argument in the previous subsection. Now, the
function $T$ in (\ref{tDef}) is continuous outside the spectrum $\sigma(x)$ of
$x.$ Thus, if $\lambda$ is outside $\sigma(x)$ and satisfies $T(\lambda)>t,$
then $T$ cannot be in the closure of the set $\Sigma_{t}=\{T<t\}$. Thus, under
the assumptions of the theorem, the point $\lambda$ is outside $\overline
{\Sigma}_{t},$ is outside $\sigma(x),$ and satisfies $T(\lambda)>t.$ At this
point, the proof of Theorem \ref{saPlusCirc.thm} goes through without change.
\end{proof}

We now investigate how we can apply Theorem \ref{2cond.thm}. In this
investigation, the following result will be useful.

\begin{lemma}
\label{Tg.lem}If $\lambda$ is outside the spectrum of $x$ and outside
$\overline{\Sigma}_{t},$ then $T(\lambda)>t.$
\end{lemma}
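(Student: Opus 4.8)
The statement asserts that if $\lambda$ is outside $\sigma(x)$ and outside $\overline{\Sigma}_t$, then $T(\lambda) > t$ (strict inequality). This is the exact analogue of Lemma \ref{Tgreater.lem}, but now for arbitrary (not necessarily self-adjoint) $x$. My plan is to follow the proof of Lemma \ref{Tgreater.lem} essentially verbatim, the key point being that the only place that proof used self-adjointness of $x$ was through the invocation of Lemma \ref{specContain.lem} (``$\sigma(x)\subseteq\overline{\Sigma}_t$''), and here we have simply \emph{assumed} the relevant consequence — namely that $\lambda$ lies outside both $\sigma(x)$ and $\overline{\Sigma}_t$.

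Concretely, the first step is to establish the subharmonicity-type estimate
\[
\Delta\!\left(\frac{1}{T(\lambda)}\right) = 4\,\mathrm{tr}\!\left[\left|(x-\lambda)^2\right|^{-2}\right] > 0
\]
for all $\lambda$ outside $\sigma(x)$. This computation goes through with no change from the proof of Lemma \ref{Tgreater.lem}: for $\lambda\notin\sigma(x)$, the element $x-\lambda$ is invertible, one writes $1/T(\lambda) = \mathrm{tr}[(x-\lambda)^{-1}(x^*-\bar\lambda)^{-1}]$, differentiates under the trace using $\partial_\lambda(x-\lambda)^{-1} = (x-\lambda)^{-2}$ and $\partial_{\bar\lambda}(x-\lambda)^{-1} = 0$, and obtains that $\Delta(1/T)$ equals $\mathrm{tr}[(x-\lambda)^{-2}(x^*-\bar\lambda)^{-2}] = \mathrm{tr}[b^*b]$ with $b = (x^*-\bar\lambda)^{-2}$, which is strictly positive by faithfulness of the trace since $b\neq 0$. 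Note the computation nowhere uses $x = x^*$; it only uses invertibility of $x-\lambda$.

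The second step deduces the conclusion from the strict subharmonicity. Since $1/T$ has positive Laplacian on the open set $\mathbb{C}\setminus\sigma(x)$, it satisfies the strong maximum principle there, so $1/T$ has no weak local maximum in that open set and, equivalently, $T$ has no weak local minimum there. Now suppose $\lambda$ is outside $\sigma(x)$ and outside $\overline{\Sigma}_t$. Certainly $T(\lambda)\geq t$ (since $\lambda\notin\Sigma_t = \{T<t\}$). If $T(\lambda) = t$, then for all $\lambda'$ in a small neighborhood of $\lambda$ — which we may take to lie inside $\mathbb{C}\setminus\sigma(x)$, since $\sigma(x)$ is closed — we must have $T(\lambda')\geq t$ as well, because otherwise $\lambda'\in\Sigma_t$ for $\lambda'$ arbitrarily close to $\lambda$, forcing $\lambda\in\overline{\Sigma}_t$, contrary to hypothesis. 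But then $\lambda$ would be a weak local minimum of $T$ inside $\mathbb{C}\setminus\sigma(x)$, which we just ruled out. Hence $T(\lambda) > t$.

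I do not anticipate a genuine obstacle here: this is a routine adaptation of the already-proved Lemma \ref{Tgreater.lem}, with the self-adjointness hypothesis replaced by the assumption ``$\lambda\notin\sigma(x)$'' that was previously supplied by Lemma \ref{specContain.lem}. The only point requiring a moment of care is ensuring that the neighborhood of $\lambda$ used in the maximum-principle argument stays inside $\mathbb{C}\setminus\sigma(x)$, which is immediate from $\sigma(x)$ being closed and $\lambda\notin\sigma(x)$.
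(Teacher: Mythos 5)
Your proposal is correct and follows exactly the paper's route: the paper's proof of this lemma is literally the one-line observation that the proof of Lemma \ref{Tgreater.lem} goes through unchanged once $\lambda\notin\sigma(x)$ is assumed rather than deduced from self-adjointness, and you have spelled out that observation in full detail.
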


\begin{proof}
If we assume that $\lambda$ is outside the spectrum of $x,$ then the proof of
Lemma \ref{Tgreater.lem} (from the self-adjoint case) goes through without change.
\end{proof}

For our first application of Theorem \ref{2cond.thm}, we simply make the
conclusion of Lemma \ref{specContain.lem} (from the case that $x$ is
self-adjoint) an assumption.

\begin{corollary}
\label{specIn.cor}Let $c_{t}$ be a circular element of variance $t,$ let $x$
be another element (not necessarily self-adjoint) that is freely independent
of $c_{t}.$ Assume that, for some fixed $t,$ the spectrum of $x$ is contained
in $\overline{\Sigma}_{t}.$ Then the spectrum of $x+c_{t}$ is contained in
$\overline{\Sigma}_{t}.$

Since \cite[Theorem B]{Zhong} tells us that the support of $\mathrm{Br}%
_{x+c_{t}}$ is exactly $\overline{\Sigma}_{t}$ (and since the Brown measure of
any element is always supported on its spectrum), we conclude that%
\[
\sigma(x+c_{t})=\mathrm{supp}(\mathrm{Br}_{x+c_{t}})=\overline{\Sigma}_{t}.
\]

\end{corollary}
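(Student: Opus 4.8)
The plan is to deduce this directly from Theorem \ref{2cond.thm} together with Lemma \ref{Tg.lem}. First I would fix an arbitrary point $\lambda\in\mathbb{C}$ lying outside $\overline{\Sigma}_{t}$ and aim to show it is outside the spectrum of $x+c_{t}$. Since by hypothesis $\sigma(x)\subseteq\overline{\Sigma}_{t}$, such a $\lambda$ is automatically outside the spectrum of $x$; this is condition (1) of Theorem \ref{2cond.thm}. To get condition (2), I would invoke Lemma \ref{Tg.lem}: because $\lambda$ lies outside both $\sigma(x)$ and $\overline{\Sigma}_{t}$, we have $T(\lambda)>t$. With both hypotheses of Theorem \ref{2cond.thm} now in hand, I conclude that $\lambda$ is outside $\sigma(x+c_{t})$. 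As $\lambda$ was an arbitrary point outside $\overline{\Sigma}_{t}$, this proves $\sigma(x+c_{t})\subseteq\overline{\Sigma}_{t}$.

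For the final chain of equalities, I would combine the inclusion just obtained with two known facts: Point \ref{brownSpec.point} of Proposition \ref{BrownProperties.prop} gives $\mathrm{supp}(\mathrm{Br}_{x+c_{t}})\subseteq\sigma(x+c_{t})$, while \cite[Theorem B]{Zhong} identifies $\mathrm{supp}(\mathrm{Br}_{x+c_{t}})$ with $\overline{\Sigma}_{t}$. Chaining these yields
\[
\overline{\Sigma}_{t}=\mathrm{supp}(\mathrm{Br}_{x+c_{t}})\subseteq\sigma(x+c_{t})\subseteq\overline{\Sigma}_{t},
\]
so all three sets coincide.

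There is essentially no substantial obstacle here: the corollary is a packaging of Theorem \ref{2cond.thm}. The one point worth being careful about is why we need $\lambda\notin\sigma(x)$ in addition to $\lambda\notin\overline{\Sigma}_{t}$, rather than merely $T(\lambda)\ge t$: continuity of $T$ away from $\sigma(x)$ (together with openness of $\Sigma_{t}$ from Lemma \ref{upperSemi.lem}) is exactly what upgrades $T(\lambda)\ge t$ to the strict inequality $T(\lambda)>t$ needed to keep $\lambda$ out of $\overline{\Sigma}_{t}$ in Lemma \ref{Tg.lem}. All of that labor sits in the earlier lemmas, so the corollary itself is a two-line deduction plus the citation of \cite{Zhong}.
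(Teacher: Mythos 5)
Your deduction is correct and matches the paper's own one-line proof: fix $\lambda\notin\overline{\Sigma}_{t}$, use the hypothesis $\sigma(x)\subseteq\overline{\Sigma}_{t}$ to get $\lambda\notin\sigma(x)$, apply Lemma~\ref{Tg.lem} to get $T(\lambda)>t$, and then apply Theorem~\ref{2cond.thm}. One small caution about your closing aside: the upgrade from $T(\lambda)\ge t$ to $T(\lambda)>t$ in Lemma~\ref{Tg.lem} does not come from continuity of $T$ (a continuous function $\ge t$ on an open set can still equal $t$ somewhere); it comes from the subharmonicity argument in the proof of Lemma~\ref{Tgreater.lem}, namely that $\Delta(1/T)>0$ off $\sigma(x)$, so $T$ cannot have a weak local minimum there, which is what rules out $T(\lambda)=t$ at a point outside $\overline{\Sigma}_{t}$.
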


\begin{proof}
If $\sigma(x)\subset\overline{\Sigma}_{t},$ then by Lemma \ref{Tg.lem},
Theorem \ref{2cond.thm} will apply to every point outside $\overline{\Sigma
}_{t}.$
\end{proof}

Since our goal is ultimately to prove that the spectrum and Brown support of
$x+c_{t}$ are equal, it is natural to assume that this condition holds at
$t=0,$ that is, that the spectrum and Brown support of $x$ are equal.

\begin{corollary}
\label{equalSupports.cor}If the spectrum and Brown support of $x$ coincide,
then the spectrum of $x$ is contained in $\overline{\Sigma}_{t}$ and Corollary
\ref{specIn.cor} tells us that
\[
\sigma(x+c_{t})=\mathrm{supp}(\mathrm{Br}_{x+c_{t}})=\overline{\Sigma}_{t}.
\]

\end{corollary}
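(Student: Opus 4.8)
The plan is to deduce the statement directly from Corollary \ref{specIn.cor}, so the only assertion that actually needs work is that $\sigma(x)\subseteq\overline{\Sigma}_t$ for every $t>0$. Since we are assuming $\sigma(x)=\mathrm{supp}(\mathrm{Br}_x)$, this is equivalent to showing $\mathrm{supp}(\mathrm{Br}_x)\subseteq\overline{\Sigma}_t$, and the argument will mirror the ``different proof'' of Lemma \ref{specContain.lem} given in the self-adjoint case, now carried out with the Brown measure $\mathrm{Br}_x$ in place of the spectral law $\mu_x$ and with $\mathrm{tr}[|x-\lambda|^{-2}]$ interpreted as in (\ref{limdSdEpsilon2}).

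Concretely, I would first recall that $\Sigma_t=\{\lambda\in\mathbb{C}:T(\lambda)<t\}$ from (\ref{SigmaDef}) is open, by Lemma \ref{upperSemi.lem}, whose proof does not use self-adjointness. The key step is to show that $\Sigma_t$ has full measure for $\mathrm{Br}_x$. For this I would invoke \cite[Lemma 4.5]{Zhong}, which gives that $\mathrm{tr}[|x-\lambda|^{-2}]$, defined via (\ref{limdSdEpsilon2}), is infinite for $\mathrm{Br}_x$-almost-every $\lambda\in\mathbb{C}$; by the definition (\ref{Tdef}) of $T$ this says $T(\lambda)=0$ for $\mathrm{Br}_x$-a.e.\ $\lambda$, so the set $\{T=0\}\subseteq\Sigma_t$ already carries full $\mathrm{Br}_x$-mass for any $t>0$. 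Hence $\overline{\Sigma}_t$ is a \emph{closed} set of full $\mathrm{Br}_x$-measure and therefore contains the closed support $\mathrm{supp}(\mathrm{Br}_x)$. Combined with the hypothesis $\sigma(x)=\mathrm{supp}(\mathrm{Br}_x)$, this yields $\sigma(x)\subseteq\overline{\Sigma}_t$.

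With $\sigma(x)\subseteq\overline{\Sigma}_t$ in hand, Corollary \ref{specIn.cor} applies verbatim: it gives $\sigma(x+c_t)\subseteq\overline{\Sigma}_t$, and since the Brown measure of any element is supported on its spectrum and \cite[Theorem B]{Zhong} identifies $\mathrm{supp}(\mathrm{Br}_{x+c_t})$ as exactly $\overline{\Sigma}_t$, the inclusions $\overline{\Sigma}_t=\mathrm{supp}(\mathrm{Br}_{x+c_t})\subseteq\sigma(x+c_t)\subseteq\overline{\Sigma}_t$ collapse to the desired equalities. The main ``obstacle'' is really just making sure that \cite[Lemma 4.5]{Zhong} is available for a general (not-necessarily-self-adjoint) element $x$ and for its Brown measure---i.e.\ that the possibly divergent quantity $\mathrm{tr}[|x-\lambda|^{-2}]$ of (\ref{limdSdEpsilon2}) is indeed $+\infty$ for $\mathrm{Br}_x$-a.e.\ $\lambda$; this is the one point at which the general case genuinely departs from the self-adjoint argument of Lemma \ref{specContain.lem}, and everything else is a short deduction.
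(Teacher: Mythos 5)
Your overall strategy matches the paper's: show $\sigma(x)=\mathrm{supp}(\mathrm{Br}_x)\subseteq\overline{\Sigma}_t$, then feed this into Corollary \ref{specIn.cor} together with Zhong's computation of the Brown support. Where you diverge from the paper---and where the gap is---is in how you obtain the containment $\mathrm{supp}(\mathrm{Br}_x)\subseteq\overline{\Sigma}_t$.

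Your appeal to \cite[Lemma 4.5]{Zhong} does not quite do what you claim. As it is used in the paper's proof of Lemma \ref{specContain.lem}, that lemma controls the integral $\int\frac{1}{|\xi-\lambda|^2}\,d\mu(\xi)$, asserting that it is infinite for $\mu$-a.e.\ $\lambda$. For \emph{self-adjoint} $x$ this integral equals $\mathrm{tr}[|x-\lambda|^{-2}]$ via (\ref{Neg2Moment}), but for general $x$ the quantity $\mathrm{tr}[|x-\lambda|^{-2}]$ is defined through (\ref{limdSdEpsilon2}) as a spectral quantity of $|x-\lambda|^2$; it is \emph{not} an integral against $\mathrm{Br}_x$, and the paper stresses precisely this distinction. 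So Lemma 4.5 on its own does not deliver ``$\mathrm{tr}[|x-\lambda|^{-2}]=\infty$ for $\mathrm{Br}_x$-a.e.\ $\lambda$.'' To make your route work you would also need the bridge inequality from \cite[Theorem 4.6]{Zhong} (quoted in the proof of Lemma \ref{InjPhi.lem}), namely $\int_{\mathbb{C}}\frac{1}{|\lambda-z|^2}\,d\mathrm{Br}_x(z)\le\mathrm{tr}[|x-\lambda|^{-2}]$: Lemma 4.5 applied to $\mu=\mathrm{Br}_x$ makes the left side infinite a.e., hence so is the right side, hence $T=0$ a.e.\ and $\overline{\Sigma}_t$ contains the support. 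You flag this ``obstacle'' yourself but leave it unresolved.

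The paper's own proof bypasses this entirely by invoking the \emph{other} direction of \cite[Theorem 4.6]{Zhong} (the implication quoted earlier in the paper as ``Theorem (Zhong)''): if $\mathrm{tr}[|x-\lambda|^{-2}]$ is finite on a neighborhood of $\lambda_0$, then $\lambda_0$ is outside $\mathrm{supp}(\mathrm{Br}_x)$. Since $(\overline{\Sigma}_t)^c$ is open and on it $\mathrm{tr}[|x-\lambda|^{-2}]=1/T(\lambda)\le 1/t<\infty$, every such $\lambda_0$ is outside the Brown support, so $\mathrm{supp}(\mathrm{Br}_x)\subseteq\overline{\Sigma}_t$. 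This is shorter, needs no a.e.\ argument, and never raises the question of whether Lemma 4.5 applies to $\mathrm{tr}[|x-\lambda|^{-2}]$ for non-normal $x$.
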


\begin{proof}
By (\ref{Tdef}) and (\ref{SigmaDef}), we have $\mathrm{tr}[\left\vert
x-\lambda\right\vert ^{-2}]\leq1/t$ for $\lambda$ outside $\overline{\Sigma
}_{t}.$ Then, by the last part of Theorem 4.6 in \cite{Zhong}, the Brown
support of $x$---which by assumption equals the spectrum of $x$---is contained
in $\overline{\Sigma}_{t}.$ Thus, Corollary \ref{specIn.cor} applies.
\end{proof}

Even if the Brown support of $x$ is a proper subset of the spectrum of $x,$ we
will still have that $\sigma(x)$ is inside $\overline{\Sigma}_{t}$ for all
sufficiently large $t.$

\begin{corollary}
\label{largeT.cor}For a fixed $x,$ the condition $\sigma(x)\subset
\overline{\Sigma}_{t}$ holds for all sufficiently large $t$ and thus for all
sufficiently large $t,$ we have
\[
\sigma(x+c_{t})=\mathrm{supp}(\mathrm{Br}_{x+c_{t}})=\overline{\Sigma}_{t}.
\]

\end{corollary}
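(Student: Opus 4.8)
The plan is to reduce the statement to the compactness of $\sigma(x)$ together with an elementary operator-norm bound on $T$, and then invoke Corollary \ref{specIn.cor}.

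First I would record the crude upper bound
\[
T(\lambda)\leq\|x-\lambda\|^{2}\qquad\text{for every }\lambda\in\mathbb{C}.
\]
To see this, note that $\mu_{|x-\lambda|^{2}}$ is a probability measure supported in $[0,\|x-\lambda\|^{2}]$ (by the C${}^\ast$-identity $\||x-\lambda|^{2}\|=\|x-\lambda\|^{2}$), so that $1/\xi\geq\|x-\lambda\|^{-2}$ on its support and hence $\mathrm{tr}[|x-\lambda|^{-2}]=\int_{0}^{\infty}\xi^{-1}\,d\mu_{|x-\lambda|^{2}}(\xi)\geq\|x-\lambda\|^{-2}$, the left-hand side being interpreted as in (\ref{limdSdEpsilon2}) and possibly equal to $+\infty$. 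By the definition (\ref{Tdef}) of $T$ this is exactly the claimed bound. If $\mu_{|x-\lambda|^{2}}$ has an atom at $0$, then $\mathrm{tr}[|x-\lambda|^{-2}]=+\infty$ and $T(\lambda)=0$, so the bound holds trivially in that case as well.

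Next I would use that $\sigma(x)$ is bounded: every $\lambda\in\sigma(x)$ satisfies $|\lambda|\leq\|x\|$ since the spectral radius is at most the operator norm, and therefore $\|x-\lambda\|\leq\|x\|+|\lambda|\leq 2\|x\|$. Combining this with the previous bound gives $T(\lambda)\leq 4\|x\|^{2}$ for all $\lambda\in\sigma(x)$. Consequently, as soon as $t>4\|x\|^{2}$, every point of $\sigma(x)$ lies in $\Sigma_{t}=\{T<t\}$, and in particular $\sigma(x)\subset\Sigma_{t}\subset\overline{\Sigma}_{t}$.

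Finally, with the hypothesis $\sigma(x)\subset\overline{\Sigma}_{t}$ now verified for all large $t$, Corollary \ref{specIn.cor} applies and yields $\sigma(x+c_{t})=\mathrm{supp}(\mathrm{Br}_{x+c_{t}})=\overline{\Sigma}_{t}$, as desired. There is no real obstacle in this argument; the only point requiring a moment's care is the interpretation of $\mathrm{tr}[|x-\lambda|^{-2}]$ when $|x-\lambda|^{2}$ fails to be invertible, which---as noted above---only makes $T$ smaller and hence only helps.
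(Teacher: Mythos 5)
Your proof is correct, and it takes a genuinely more explicit route than the paper's. The paper argues abstractly: $T$ is upper semicontinuous (Lemma \ref{upperSemi.lem}), so it attains a finite maximum $T_{\max}$ on the compact set $\sigma(x)$, and then any $t>T_{\max}$ works. You instead derive the crude but explicit bound $T(\lambda)\leq\|x-\lambda\|^{2}$ directly from the definition of $T$ and the fact that $\mu_{|x-\lambda|^{2}}$ is supported in $[0,\|x-\lambda\|^{2}]$, and then use the spectral radius bound $|\lambda|\leq\|x\|$ to conclude $T\leq 4\|x\|^{2}$ on $\sigma(x)$. Each step of your argument is sound, including the careful handling of a possible atom of $\mu_{|x-\lambda|^{2}}$ at $0$, where $T(\lambda)=0$ and the bound is vacuous. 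What you gain is a completely elementary argument with a concrete threshold $t>4\|x\|^{2}$ (independent of Lemma \ref{upperSemi.lem}); what the paper's approach gains is brevity, given that upper semicontinuity of $T$ is already established and needed elsewhere. Both then invoke Corollary \ref{specIn.cor} identically.
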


\begin{proof}
The function $T$ is upper semicontinuous by Lemma \ref{upperSemi.lem}. Thus,
$T$ achieves a maximum $T_{\max}$ on the compact set $\sigma(x),$ by an
elementary property of upper semicontinuous functions. Thus, $\sigma(x)$ is
contained in $\Sigma_{t}$ (and therefore also in $\overline{\Sigma}_{t}$) for
all $t>T_{\max}.$
\end{proof}

\begin{remark}
Although our proof of Theorem \ref{2cond.thm} (following the proof of Theorem
\ref{saPlusCirc.thm}) uses the PDE\ method, we could also alternatively use
results from Zhong's paper \cite{Zhong}, which is based on
subordination-function methods. For example, we may look at Equation (3.13) in
\cite{Zhong}. We identify $w$ there with $\sqrt{\varepsilon_{0}}$ here and
$\varepsilon$ there with $\sqrt{\varepsilon(t)}$ here. Then after rearranging
slightly, the formula in \cite{Zhong} becomes%
\[
\sqrt{\varepsilon(t)}=\sqrt{\varepsilon_{0}}\left(  1-t\mathrm{tr}\left[
(\left\vert x-\lambda\right\vert ^{2}+\varepsilon_{0})^{-1}\right]  \right)
,
\]
which agrees with our formula (\ref{epsilonOfT}) for $\varepsilon(t).$ We then
consider the $\gamma=0$ case of Eq. (3.23) in \cite{Zhong}, in which case, the
quantity $z$ there equals $\lambda.$ This equation then says, in our notation,
that%
\[
\sqrt{\varepsilon(t)}\frac{\partial S}{\partial\varepsilon}(t,\lambda
,\varepsilon(t))=\sqrt{\varepsilon_{0}}\frac{\partial S}{\partial\varepsilon
}(0,\lambda,\varepsilon_{0}),
\]
which is equivalent to the second Hamilton--Jacobi formula (\ref{HJ2}) with
$p_{\varepsilon}(t)$ described by (\ref{pOfT}) or (\ref{COM}).
\end{remark}

\subsection{Arbitrary plus elliptic\label{arbPlusElliptic.sec}}

Zhong \cite{Zhong} considers an element of the form
\[
x+g_{t,\gamma}%
\]
where $g_{t,\gamma}$ is as in Section \ref{addStatements.sec} and $x$ is
freely independent of $g_{t,\gamma}.$ The case $\gamma=0$ corresponds to the
case $x+c_{t}$ discussed in the previous subsection.

Although this is not how Zhong attacks the problem, it is possible to analyze
$x+g_{t,\gamma}$ using a PDE method, by adapting the results of \cite{HallHo2}
to the additive setting. We use the notation
\[
S(t,\gamma,\lambda,\varepsilon)
\]
for the regularized log potential (as in (\ref{sDef})) of the element
$x+g_{t,\gamma}.$ The PDE for $S$ would then be%
\begin{equation}
\frac{\partial S}{\partial\gamma}=-\frac{1}{2}\left(  \frac{\partial
S}{\partial\lambda}\right)  ^{2},\label{Spde1}%
\end{equation}
where $\partial/\partial\gamma$ and $\partial/\partial\lambda$ are the
Cauchy--Riemann operators with respect to the \textit{complex} variables
$\gamma$ and $\lambda.$ If we take $\gamma$ to be a real number $u$ and take
the real part of both sides of (\ref{Spde1}), we obtain a PDE in real-variable
form:
\begin{equation}
\frac{\partial S}{\partial u}=-\operatorname{Re}\left[  \left(  \frac{\partial
S}{\partial\lambda}\right)  ^{2}\right]  =-\frac{1}{4}\left[  \left(
\frac{\partial S}{\partial x}\right)  ^{2}-\left(  \frac{\partial S}{\partial
y}\right)  ^{2}\right]  ,\quad\lambda=x+iy.\label{Spde2}%
\end{equation}
(There is no real loss of generality in assuming $\gamma$ to be real, since we
can multiply the $x+g_{t,\gamma}$ by a constant of absolute value 1 to
eliminate the factor of $e^{i\theta}$ in (\ref{gdef}), at which point $\gamma$
becomes real.) Note that no derivatives with respect to $t$ or $\varepsilon$
appear in the PDEs (\ref{Spde1}) and (\ref{Spde2}). 

\begin{remark}
The PDEs (\ref{Spde1}) and (\ref{Spde2}) also arise in the analysis of the
evolution of roots of polynomials when the polynomials evolve according to the
heat flow, as in \cite{HallHoHeat} and \cite{HHJK}. 
\end{remark}

Although there is a PDE that applies to the case of $x+g_{t,\gamma},$ Zhong
instead uses methods of free probability and subordination functions. We now
state the main result of Zhong about this case.

\begin{theorem}
[Zhong]\label{ZhongPush.thm}Fix $t>0$ and $\gamma\in\mathbb{C}$ satisfying
(\ref{gammaAndT}) and define a map $\Phi_{t,\gamma}:\mathbb{C}\rightarrow
\mathbb{C}$ by%
\begin{equation}
\Phi_{t,\gamma}(\lambda)=\lambda+\gamma G_{x+c_{t}}(\lambda), \label{PhiDef}%
\end{equation}
where $G_{x+c_{t}}$ is the Cauchy transform of the Brown measure of $x+c_{t}$:%
\begin{equation}
G_{x+c_{t}}(\lambda)=\int_{\mathbb{C}}\frac{1}{\lambda-z}~d\mathrm{Br}%
_{x+c_{t}}(z). \label{Gct}%
\end{equation}
Then $\Phi_{t,\gamma}$ is continuous and the Brown measure $\mathrm{Br}%
_{x+g_{t,\gamma}}$ of $x+g_{t,\gamma}$ is the push-forward of $\mathrm{Br}%
_{x+c_{t}}$ under $\Phi_{t,\gamma}$:%
\[
\mathrm{Br}_{x+g_{t,\gamma}}=(\Phi_{t,\gamma})_{\ast}(\mathrm{Br}_{x+c_{t}}).
\]

\end{theorem}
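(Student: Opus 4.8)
The plan is to run the PDE method: analyze the regularized log potential $S(t,\gamma,\lambda,\varepsilon)$ of $x+g_{t,\gamma}$ via the PDE (\ref{Spde1}), treating $t$ (and $\varepsilon$) as spectators and $\gamma$ as the evolution variable, with initial data at $\gamma=0$ equal to the regularized log potential of $x+c_{t}$. After rotating so that $\gamma=u$ is real, (\ref{Spde2}) is a first-order Hamilton--Jacobi equation in $\lambda=x+iy$ with the quadratic Hamiltonian $H(p_{x},p_{y})=\tfrac14(p_{x}^{2}-p_{y}^{2})$; Hamilton's equations have trivial dynamics (momenta conserved, positions linear), so the characteristic issuing from $\lambda_{0}$ is $\lambda(u)=\lambda_{0}+u\,\partial_{\lambda}S(t,0,\lambda_{0},\varepsilon)$, and the two Hamilton--Jacobi formulas give $\partial_{\lambda}S(t,u,\lambda(u),\varepsilon)=\partial_{\lambda}S(t,0,\lambda_{0},\varepsilon)$ together with $S(t,u,\lambda(u),\varepsilon)=S(t,0,\lambda_{0},\varepsilon)+u\operatorname{Re}[(\partial_{\lambda}S(t,0,\lambda_{0},\varepsilon))^{2}]$. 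As $\varepsilon\to0^{+}$, $\partial_{\lambda}S(t,0,\,\cdot\,,\varepsilon)$ tends to the Cauchy transform $G_{x+c_{t}}$ of $\mathrm{Br}_{x+c_{t}}$ (Proposition \ref{BrownProperties.prop}), so the characteristic map degenerates precisely to $\Phi_{t,u}$.

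With the characteristic map identified, I would verify the push-forward formula by checking that the candidate log potential $\tilde{s}(\lambda):=\int_{\mathbb{C}}\log|\lambda-\Phi_{t,u}(z)|^{2}\,d\mathrm{Br}_{x+c_{t}}(z)$ --- the log potential of $(\Phi_{t,u})_{*}\mathrm{Br}_{x+c_{t}}$, manifestly subharmonic and equal to $s_{x+c_{t}}$ at $u=0$ --- satisfies the same evolution equation $\partial_{u}\tilde{s}=-\operatorname{Re}[(\partial_{\lambda}\tilde{s})^{2}]$, and then concluding $\tilde{s}=s_{x+g_{t,u}}$ and hence (taking $\tfrac{1}{4\pi}\Delta$) the measure identity. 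Differentiating $\tilde{s}$ under the integral sign, using $\partial_{u}\Phi_{t,u}(z)=G_{x+c_{t}}(z)$, symmetrizing the resulting double integral, and applying the partial-fraction identity $\frac{G_{\mu}(z)-G_{\mu}(w)}{z-w}=-\int\frac{d\mu(\zeta)}{(z-\zeta)(w-\zeta)}$ (with $\mu=\mathrm{Br}_{x+c_{t}}$), one finds that the PDE for $\tilde{s}$ is equivalent to the single identity $\operatorname{Re}\int_{\mathbb{C}}\bigl(\int_{\mathbb{C}}\frac{d\mathrm{Br}_{x+c_{t}}(z)}{(z-\zeta)(\lambda-\Phi_{t,u}(z))}\bigr)^{2}\,d\mathrm{Br}_{x+c_{t}}(\zeta)=0$; equivalently, the $L^{2}(\mathrm{Br}_{x+c_{t}})$ norms of the real and imaginary parts of that inner Cauchy-transform-type function coincide. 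This would be established using the explicit description of $\mathrm{Br}_{x+c_{t}}$ and of $G_{x+c_{t}}$ inside $\overline{\Sigma}_{t}$ from \cite{HZ,Zhong} together with a Green's-theorem computation.

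I expect the technical identity above to be routine once the correct structural input is in place; the genuine obstacles are two softer points. First, the $\varepsilon=0$ PDE has no uniqueness theory in general (cf.\ Remark \ref{epsilonSquared.rem}), so one cannot simply match PDEs and initial data at $\varepsilon=0$; instead the whole characteristic analysis must be carried out at $\varepsilon>0$, where $S$ is genuinely $C^{\infty}$ and the Hamilton--Jacobi formulas are unambiguous, and the limit $\varepsilon\to0^{+}$ justified by monotone and dominated convergence of the relevant traces (this is the style of argument in \cite{DHK,HZ,HallHo2}). Second, and more seriously, $\Phi_{t,\gamma}$ is badly non-injective --- as $\gamma\to t$ it collapses the two-dimensional set $\overline{\Sigma}_{t}$ onto a one-dimensional interval --- so the characteristics focus, the naive pointwise form of the second Hamilton--Jacobi formula ($G_{x+g_{t,\gamma}}\circ\Phi_{t,\gamma}=G_{x+c_{t}}$) is valid only off the shock set, which is exactly where $\mathrm{Br}_{x+g_{t,\gamma}}$ lives, and the honest push-forward (rather than merely ``$\mathrm{Br}_{x+g_{t,\gamma}}$ has the correct Cauchy transform outside its support'') only emerges after taking $\tfrac{1}{4\pi}\Delta$ of the Hamilton--Jacobi value formula and pushing the change of variables $\lambda=\Phi_{t,\gamma}(\lambda_{0})$ carefully across the caustic. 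Handling this focusing is the crux.

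Finally, I would note the alternative followed by Zhong, which sidesteps the caustic bookkeeping entirely: Hermitize $x+g_{t,\gamma}$, use that $x$ and the Hermitization of $g_{t,\gamma}$ (an operator-valued semicircular element over $M_{2}(\mathbb{C})$) are free over $M_{2}(\mathbb{C})$ to write an explicit operator-valued subordination relation for the $M_{2}(\mathbb{C})$-valued Cauchy transform of $x+g_{t,\gamma}$ in terms of that of $x$, specialize to $\gamma=0$ to recover $x+c_{t}$, extract the relevant scalar Cauchy transforms, and let $\varepsilon\to0^{+}$; the resulting relation between $\mathrm{Br}_{x+g_{t,\gamma}}$ and $\mathrm{Br}_{x+c_{t}}$ unwinds to the stated push-forward. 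This trades the focusing analysis for the operator-valued free-probability machinery.
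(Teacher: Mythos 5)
The paper does not prove this theorem at all: it is attributed to Zhong and the text simply cites ``Theorems C and D in \cite{Zhong}'' after noting that, although a PDE analysis via (\ref{Spde1}) and (\ref{Spde2}) would be possible, ``Zhong instead uses methods of free probability and subordination functions.'' So there is no in-paper proof to match against; what you have written is a sketch of the alternative PDE route that the authors mention but deliberately do not pursue.

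Within that framing, your sketch correctly identifies the essential structure --- the Hamiltonian $\frac14(p_x^2-p_y^2)$, conserved momenta, linear characteristics, and the degeneration of the characteristic map to $\Phi_{t,\gamma}$ as $\varepsilon\to 0^+$ once $\partial_\lambda S(t,0,\cdot,\varepsilon)\to G_{x+c_t}$ --- and you correctly flag both serious obstacles (no uniqueness for the $\varepsilon=0$ Hamilton--Jacobi equation, and focusing of characteristics when $\Phi_{t,\gamma}$ fails to be injective on $\overline\Sigma_t$). But the proposal does not actually overcome either obstacle: the first paragraph's plan of verifying that the candidate potential $\tilde s$ satisfies the $\varepsilon=0$ PDE and invoking uniqueness is exactly what your own second paragraph rules out, and the promised Green's-theorem identity and the ``careful change of variables across the caustic'' are asserted rather than carried out, with your own words conceding that ``handling this focusing is the crux.'' There is also a subtler issue you gloss over: the PDE (\ref{Spde1}) governs the \emph{operator} regularization $S(t,\gamma,\lambda,\varepsilon)=\mathrm{tr}[\log(|x+g_{t,\gamma}-\lambda|^2+\varepsilon)]$, and this is not determined by the Brown measure alone (as the paper emphasizes after (\ref{Sdef})); a candidate $\tilde s$ built purely from $(\Phi_{t,u})_*\mathrm{Br}_{x+c_t}$ has no canonical $\varepsilon$-regularization that one can plug into the PDE, so ``carrying out the whole characteristic analysis at $\varepsilon>0$'' for $\tilde s$ is not straightforward to even set up. Your final paragraph, sketching Zhong's operator-valued subordination argument, is a fair description of the route the paper actually relies on. In short: this is an honest and largely correct road map, but as a proof it has acknowledged gaps at precisely the points that make the theorem nontrivial.
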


See Theorems C and D in \cite{Zhong}. In the case $x=0,$ we have that
$\mathrm{Br}_{c_{t}}$ is uniform on an a disk, $\mathrm{Br}_{g_{t,\gamma}}$ is
uniform on an ellipse, and the restriction of $\Phi_{t,\gamma}$ to the support
disk of $\mathrm{Br}_{c_{t}}$ is real linear. Theorem \ref{ZhongPush.thm} says
that if we fix the element $x$ and the parameter $t$ but vary the parameter
$\gamma$ starting from $\gamma=0,$ the Brown measure of $x+g_{t,\gamma}$
varies in a nice way---as push-forward under the explicit map given in
(\ref{PhiDef}). This sort of push-forward behavior is sometimes referred to as
the \textbf{model deformation phenomenon}:\ deforming the free probability
model (in a specific way) deforms the Brown measure in computable fashion. The
model deformation phenomenon was actually first observed by Hall and Ho
\cite{HallHo2} in the multiplicative setting; see Section \ref{ubtGamma.sec}.

We now state our first result about the spectrum of $x+g_{t,\gamma}.$

\begin{proposition}
\label{specElliptic.prop}Fix $t>0$ and $\gamma\in\mathbb{C}$ with $\left\vert
\gamma\right\vert \leq t$ and let $\Phi_{t,\gamma}$ be as in (\ref{PhiDef}).
Assume that the spectrum $\sigma(x)$ of $x$ is contained in $\overline{\Sigma
}_{t},$ which will hold if $x$ is normal or, more generally, if $\sigma(x)$
coincides with the Brown support of $x.$ Then for all $\lambda$ outside of
$\overline{\Sigma}_{t},$ the point $\Phi_{t,\gamma}(\lambda)$ is outside the
spectrum of $x+g_{t,\gamma}.$
\end{proposition}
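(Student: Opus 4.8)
\emph{Proof strategy.} Fix $\lambda_{0}\notin\overline{\Sigma}_{t}$ and put $\mu_{0}=\Phi_{t,\gamma}(\lambda_{0})$. By Theorem \ref{main.thm} it suffices to show that $\varepsilon\mapsto\partial S/\partial\varepsilon(t,\gamma,\mu_{0},\varepsilon)$, where $S(t,\gamma,\mu,\varepsilon)$ denotes the regularized log potential of $x+g_{t,\gamma}$, has a real-analytic extension past $\varepsilon=0$. For this it is enough to produce a jointly real-analytic extension of $S(t,\gamma,\cdot,\cdot)$ to a neighborhood of $(\mu_{0},0)$ in $\mathbb{C}\times\mathbb{R}$: the $\varepsilon$-derivative of such an extension is then real-analytic in $\varepsilon$ near $0$ and, since $\partial S/\partial\varepsilon(t,\gamma,\mu_{0},\cdot)$ is already real-analytic on $(0,\infty)$, the two glue to the required extension on $(-\delta,\infty)$. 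The mechanism is the PDE (\ref{Spde1}), which transports the element from $\gamma=0$, where $x+g_{t,0}=x+c_{t}$, to general $\gamma$, combined with the method of characteristics as developed in \cite{HallHo2}.

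First I would collect the circular-case input. Since $\sigma(x)\subseteq\overline{\Sigma}_{t}$, Corollary \ref{specIn.cor} gives $\sigma(x+c_{t})\subseteq\overline{\Sigma}_{t}$, so $\lambda_{0}$ has a neighborhood $U$ disjoint from $\overline{\Sigma}_{t}$ and hence from $\sigma(x+c_{t})$ and from $\sigma(x)$. On $U$ the operators $|x+c_{t}-\lambda|^{2}$ are invertible with spectra in a fixed compact subset of $(0,\infty)$, so the regularized log potential $S(t,0,\lambda,\varepsilon)$ of $x+c_{t}$ is jointly real-analytic on $U\times(-\delta,\delta)$ for some $\delta>0$; and by the reasoning behind Theorem \ref{saPlusCirc.thm} (cf.\ Corollary \ref{specIn.cor}) one has $S(t,0,\lambda,0)=\mathrm{tr}[\log|x-\lambda|^{2}]$ on $U$, whence $\partial S/\partial\lambda(t,0,\lambda,0)=G_{x+c_{t}}(\lambda)=\mathrm{tr}[(\lambda-x)^{-1}]$ there, a holomorphic function of $\lambda$ on $U$.

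Next I would run the characteristics of (\ref{Spde1}), whose Hamiltonian is $H(\lambda,p)=\frac{1}{2}p^{2}$. Along a characteristic the momentum $p=\partial S/\partial\lambda$ is constant, so the characteristic issuing from $\lambda\in U$ at parameter $0$ is the segment $\gamma'\mapsto\lambda+\gamma'\,\partial S/\partial\lambda(t,0,\lambda,\varepsilon)$, arriving at parameter $\gamma$ at
\[
\Phi_{t,\gamma,\varepsilon}(\lambda):=\lambda+\gamma\,\frac{\partial S}{\partial\lambda}(t,0,\lambda,\varepsilon),
\]
which reduces at $\varepsilon=0$ to $\Phi_{t,\gamma}(\lambda)$ of (\ref{PhiDef}). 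The first Hamilton--Jacobi formula for (\ref{Spde1}) then gives, for $\varepsilon>0$ and $\lambda\in U$,
\[
S\bigl(t,\gamma,\Phi_{t,\gamma,\varepsilon}(\lambda),\varepsilon\bigr)=S(t,0,\lambda,\varepsilon)+\frac{\gamma}{2}\Bigl(\frac{\partial S}{\partial\lambda}(t,0,\lambda,\varepsilon)\Bigr)^{2},
\]
with jointly real-analytic right-hand side. Here one must verify, following the characteristic analysis of \cite{HallHo2}, that the characteristics of the (for $\varepsilon>0$, globally smooth) solution $S(t,\cdot,\cdot,\varepsilon)$ with initial data $S(t,0,\cdot,\varepsilon)$ are exactly these segments run up to parameter $\gamma$.

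The main obstacle is to invert $\lambda\mapsto\Phi_{t,\gamma,\varepsilon}(\lambda)$ near $\lambda_{0}$, uniformly for small $\varepsilon$, so as to transfer joint analyticity from $(\lambda_{0},0)$ to $(\mu_{0},0)$ through the above identity. By the analytic inverse function theorem (with $\varepsilon$ as a parameter), this reduces to nonvanishing of the real $\lambda$-Jacobian of $\Phi_{t,\gamma,0}=\Phi_{t,\gamma}$ at $\lambda_{0}$; since $\partial S/\partial\lambda(t,0,\cdot,0)=G_{x+c_{t}}$ is holomorphic on $U$, that Jacobian equals $|1+\gamma\,G_{x+c_{t}}'(\lambda_{0})|^{2}$, so it suffices to show $|\gamma\,G_{x+c_{t}}'(\lambda_{0})|<1$. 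Now $G_{x+c_{t}}'(\lambda_{0})=-\mathrm{tr}[(\lambda_{0}-x)^{-2}]$, and by Cauchy--Schwarz for the trace,
\[
\bigl|\mathrm{tr}[(\lambda_{0}-x)^{-2}]\bigr|\le\mathrm{tr}\bigl[|x-\lambda_{0}|^{-2}\bigr]=\frac{1}{T(\lambda_{0})};
\]
since $\lambda_{0}$ lies outside both $\overline{\Sigma}_{t}$ and $\sigma(x)$, Lemma \ref{Tg.lem} gives $T(\lambda_{0})>t\ge|\gamma|$, hence $|\gamma\,G_{x+c_{t}}'(\lambda_{0})|\le|\gamma|/T(\lambda_{0})<1$, as needed. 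Composing the resulting jointly real-analytic local inverse of $\Phi_{t,\gamma,\varepsilon}$ with the Hamilton--Jacobi identity then yields a jointly real-analytic extension of $S(t,\gamma,\cdot,\cdot)$ near $(\mu_{0},0)$ agreeing with the regularized log potential for $\varepsilon>0$, and Theorem \ref{main.thm} shows $\mu_{0}=\Phi_{t,\gamma}(\lambda_{0})$ is outside $\sigma(x+g_{t,\gamma})$. I expect the genuinely new ingredient to be the Jacobian estimate above—the one place where the hypotheses on $x$, $t$, and $\gamma$ are simultaneously used—while the bookkeeping around the Hamilton--Jacobi formulas for (\ref{Spde1}) should follow the template of \cite{HallHo2}.
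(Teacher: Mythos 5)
Your argument is correct and follows the same overall architecture as the paper's proof: transport the regularized log potential along the PDE (\ref{Spde1}) via a regularized version of $\Phi_{t,\gamma}$, then invert the resulting map using the analytic inverse function theorem and apply Theorem \ref{main.thm}. The paper obtains the transport identity $\partial S/\partial\varepsilon(t,\gamma,z,\varepsilon) = \partial S/\partial\varepsilon(t,0,\lambda,\varepsilon)$ (with $z$ the regularized image of $\lambda$) by citing Zhong's Corollary 3.11 rather than re-running the Hamilton--Jacobi analysis, though it notes the two are equivalent; your explicit HJ derivation, including the constancy of $p_{\lambda}$ and the formula $\Phi_{t,\gamma,\varepsilon}(\lambda)=\lambda+\gamma\,\partial S/\partial\lambda(t,0,\lambda,\varepsilon)$, is fine.

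Where you genuinely diverge from the paper is in how you establish invertibility of the Jacobian at $(\lambda_{0},0)$. The paper first proves (Lemma \ref{InjPhi.lem}) that $\Phi_{t,\gamma}$ is globally injective on $(\overline{\Sigma}_{t})^{c}$, using Cauchy--Schwarz applied to the Cauchy transform of the Brown measure together with Zhong's Theorem 4.6 (the bound $\int |\lambda-z|^{-2}\,d\mathrm{Br}_{x}\le \mathrm{tr}[|x-\lambda|^{-2}]$), and then appeals to the fact that a holomorphic injection has nonvanishing derivative. You bypass both the global injectivity lemma and Zhong's Theorem 4.6 by bounding the derivative locally and directly: from $G_{x+c_{t}}'(\lambda_{0})=-\mathrm{tr}[(\lambda_{0}-x)^{-2}]$ and Cauchy--Schwarz for the trace, $|\mathrm{tr}[(\lambda_{0}-x)^{-2}]|\le \mathrm{tr}[|x-\lambda_{0}|^{-2}]=1/T(\lambda_{0})$, and Lemma \ref{Tg.lem} then gives $1/T(\lambda_{0})<1/t\le 1/|\gamma|$. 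This is a more parsimonious route to the key estimate, at the mild cost of requiring $\lambda_{0}\notin\sigma(x)$, which your hypothesis $\sigma(x)\subseteq\overline{\Sigma}_{t}$ supplies. One small point worth spelling out: the identity $\partial S/\partial\lambda(t,0,\lambda,0)=G_{x+c_{t}}(\lambda)=\mathrm{tr}[(\lambda-x)^{-1}]$ on $U$ is not immediate; it is exactly (\ref{GequalsG}) of Lemma \ref{InjPhi.lem}, or alternatively follows from the first Hamilton--Jacobi formula for (\ref{PDEadditive}) giving $S(t,0,\lambda,0)=\mathrm{tr}[\log|x-\lambda|^{2}]$ on $U$ and then differentiating in $\lambda$ — a line or two of justification should be added.
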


\begin{lemma}
\label{InjPhi.lem}Under the assumptions of the proposition, the map
$\Phi_{t,\gamma}$ is injective on the complement of $\overline{\Sigma}_{t}$
and may be computed on $(\overline{\Sigma}_{t})^{c}$ as%
\begin{equation}
\Phi_{t,\gamma}(\lambda)=\lambda+\gamma G_{x}(\lambda),\quad\lambda
\in(\overline{\Sigma}_{t})^{c}. \label{Phix}%
\end{equation}

\end{lemma}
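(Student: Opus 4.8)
The plan is to establish the two assertions of the lemma separately: the formula (\ref{Phix}) for $\Phi_{t,\gamma}$ on the open set $U:=(\overline{\Sigma}_{t})^{c}$, and then injectivity of $\Phi_{t,\gamma}$ on $U$.

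I begin by recording what the hypothesis buys us. Since $\sigma(x)\subseteq\overline{\Sigma}_{t}$, every $\lambda\in U$ lies outside $\sigma(x)$, so $|x-\lambda|^{2}$ is invertible and $\mathrm{tr}[\log|x-\lambda|^{2}]$ is a well-defined real-analytic---indeed harmonic, since $\mathrm{supp}(\mathrm{Br}_{x})\subseteq\sigma(x)\subseteq\overline{\Sigma}_{t}$---function of $\lambda$ on $U$; moreover Lemma \ref{Tg.lem} gives $T(\lambda)>t$ for all $\lambda\in U$. To prove (\ref{Phix}) I would compare the log potentials of $\mathrm{Br}_{x+c_{t}}$ and $\mathrm{Br}_{x}$ on $U$. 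The Hamilton--Jacobi analysis of Ho--Zhong applies at each $\lambda\in U$ (exactly as in the proof of Theorem \ref{2cond.thm}, using $\lambda\notin\sigma(x)$ and $T(\lambda)>t$): letting $\varepsilon_{0}\to0^{+}$ in the first Hamilton--Jacobi formula (\ref{HJ1}) forces $\varepsilon(t)\to0$ and $H(\varepsilon_{0},p_{\varepsilon,0})\to0$, so that $\lim_{\varepsilon\to0^{+}}S(t,\lambda,\varepsilon)=S(0,\lambda,0)=\mathrm{tr}[\log|x-\lambda|^{2}]$. Since $\mathrm{supp}(\mathrm{Br}_{x+c_{t}})=\overline{\Sigma}_{t}$ by \cite[Theorem B]{Zhong}, the left-hand side is the log potential of $\mathrm{Br}_{x+c_{t}}$, which is harmonic on $U$ (where $\mathrm{Br}_{x+c_{t}}$ puts no mass) and coincides there with $\mathrm{tr}[\log|x-\lambda|^{2}]$, the log potential of $\mathrm{Br}_{x}$. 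A Cauchy transform equals $\partial/\partial\lambda$ of the log potential wherever the latter is smooth, so differentiating in $\lambda$ gives $G_{x+c_{t}}=G_{x}$ on $U$, which is (\ref{Phix}). Differentiating $\mathrm{tr}[\log|x-\lambda|^{2}]$ directly with the formula for the derivative of the trace of a logarithm also yields the representation
\[
G_{x}(\lambda)=\mathrm{tr}[(\lambda-x)^{-1}],\qquad\lambda\in U,
\]
which is the key input for injectivity.

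For injectivity I would use the resolvent identity. For $\lambda_{1},\lambda_{2}\in U$,
\[
\mathrm{tr}[(\lambda_{1}-x)^{-1}]-\mathrm{tr}[(\lambda_{2}-x)^{-1}]=(\lambda_{2}-\lambda_{1})\,\mathrm{tr}[(\lambda_{1}-x)^{-1}(\lambda_{2}-x)^{-1}],
\]
and hence, by (\ref{Phix}),
\[
\Phi_{t,\gamma}(\lambda_{1})-\Phi_{t,\gamma}(\lambda_{2})=(\lambda_{1}-\lambda_{2})\bigl(1-\gamma\,\mathrm{tr}[(\lambda_{1}-x)^{-1}(\lambda_{2}-x)^{-1}]\bigr).
\]
By the Cauchy--Schwarz inequality for the trace, $|\mathrm{tr}[(\lambda_{1}-x)^{-1}(\lambda_{2}-x)^{-1}]|\leq\Vert(\lambda_{1}-x)^{-1}\Vert_{2}\,\Vert(\lambda_{2}-x)^{-1}\Vert_{2}$; and $\Vert(\lambda_{i}-x)^{-1}\Vert_{2}^{2}=\mathrm{tr}[|x-\lambda_{i}|^{-2}]=1/T(\lambda_{i})$ by (\ref{Tdef}). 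Therefore $|\gamma\,\mathrm{tr}[(\lambda_{1}-x)^{-1}(\lambda_{2}-x)^{-1}]|\leq|\gamma|/\sqrt{T(\lambda_{1})T(\lambda_{2})}<1$, using $|\gamma|\leq t$ and $T(\lambda_{i})>t$. Thus the second factor is nonzero, so $\Phi_{t,\gamma}(\lambda_{1})=\Phi_{t,\gamma}(\lambda_{2})$ forces $\lambda_{1}=\lambda_{2}$.

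I expect the main obstacle to be the first step rather than the second: the injectivity estimate is short once the resolvent representation $G_{x}(\lambda)=\mathrm{tr}[(\lambda-x)^{-1}]$ on $U$ is in hand, and it works even though $U$ need not be convex (so no Noshiro--Warschawski-type argument is available) and even in the boundary case $|\gamma|=t$, thanks to the \emph{strict} inequality $T(\lambda)>t$ from Lemma \ref{Tg.lem}. The delicate point in the first step is justifying $\lim_{\varepsilon\to0^{+}}S(t,\lambda,\varepsilon)=\mathrm{tr}[\log|x-\lambda|^{2}]$ for $\lambda\in U$ in the present generality: the text has carried out this log-potential statement explicitly only when $x$ is self-adjoint, whereas for general $x$ it has extracted only the analyticity of $\partial S/\partial\varepsilon$. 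I would resolve this by invoking the first Hamilton--Jacobi formula (\ref{HJ1}) directly, as sketched above, noting (as in the proof of Theorem \ref{2cond.thm}) that this portion of the Ho--Zhong argument, like the PDE itself, does not use self-adjointness.
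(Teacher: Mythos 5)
Your proof is correct, and it takes a genuinely different (and in some respects more self-contained) route than the paper's.

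For the identity (\ref{Phix}), the paper cites Zhong's subordination apparatus directly (Eq.\ (3.33), Lemma 3.5, and Lemma 5.11 of \cite{Zhong}) to conclude $G_{x+c_{t}}=G_{x}$ outside $\overline{\Sigma}_{t}$. You instead run the first Hamilton--Jacobi formula (\ref{HJ1}) at $\varepsilon_{0}\to0^{+}$, using $\lambda\notin\sigma(x)$ and $T(\lambda)>t$ to get $\lim_{\varepsilon\to0^{+}}S(t,\lambda,\varepsilon)=\mathrm{tr}[\log|x-\lambda|^{2}]$, identify both sides as log potentials of $\mathrm{Br}_{x+c_{t}}$ and $\mathrm{Br}_{x}$ (which is exactly Proposition \ref{BrownProperties.prop}(2)), and differentiate in $\lambda$. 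This is the same underlying mathematics expressed through the PDE method rather than through Zhong's subordination notation; your careful remark that the text uses only the \emph{second} HJ formula for general $x$, so the first must be reinvoked here, is an accurate observation and the justification you give is sound. One small point worth making explicit is that differentiating $\mathrm{tr}[\log|x-\lambda|^{2}]$ and the integral $\int\log|z-\lambda|^{2}\,d\mathrm{Br}_{x}(z)$ together gives you the identity $G_{x}(\lambda)=\mathrm{tr}[(\lambda-x)^{-1}]$ on $(\overline{\Sigma}_{t})^{c}$, which is indeed the input your injectivity step needs.

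For injectivity, your argument is cleaner than the paper's. The paper writes the difference of Cauchy transforms as a Brown-measure integral, applies Cauchy--Schwarz with respect to $\mathrm{Br}_{x}$, and then must invoke \cite[Theorem 4.6]{Zhong} to bound $\int|\lambda-z|^{-2}\,d\mathrm{Br}_{x}(z)$ by $\mathrm{tr}[|x-\lambda|^{-2}]$. You bypass that extra inequality entirely by working with the resolvent identity for the trace and Cauchy--Schwarz in $L^{2}(\mathrm{tr})$, which lands directly on $\mathrm{tr}[|x-\lambda_{i}|^{-2}]=1/T(\lambda_{i})$; the final estimate $|\gamma|/\sqrt{T(\lambda_{1})T(\lambda_{2})}<|\gamma|/t\leq1$ and the factored form $\Phi_{t,\gamma}(\lambda_{1})-\Phi_{t,\gamma}(\lambda_{2})=(\lambda_{1}-\lambda_{2})(1-\gamma\,\mathrm{tr}[(\lambda_{1}-x)^{-1}(\lambda_{2}-x)^{-1}])$ are both correct. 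The two routes prove exactly the same inequality in the end (with the same strict use of $T>t$ from Lemma \ref{Tg.lem} to handle $|\gamma|=t$), but yours needs one fewer external citation.
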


Observe that (\ref{Phix}) involves $G_{x}(\lambda),$ while (\ref{PhiDef})
involves $G_{x+c_{t}}(\lambda).$ We note that in some cases, $\Phi_{t,\gamma}$
is actually a homeomorphism of the whole complex plane onto itself. This
result holds, for example, in these two cases: (1) when $x$ is self-adjoint
with $\left\vert \gamma\right\vert \leq t$ but $\gamma\neq t,$ and (2) when
$\left\vert \gamma\right\vert <t$ and $x$ is $R$-diagonal. See Corollary 6.9
and Theorem 7.8 in \cite{Zhong}. On the other hand, if $x=0$ and $\gamma=t=1$,
then $\overline{\Sigma}_{t}$ is the closed unit disk and the restriction of
$\Phi_{t,\gamma}$ to this disk is the map $\lambda\mapsto2\operatorname{Re}%
(\lambda),$ which is not injective. But even in this case, $\Phi_{t,\gamma}$
remains injective on $(\overline{\Sigma}_{t})^{c}$; it is the conformal map
$\lambda\mapsto\lambda+1/\lambda$ from the complement of the closed unit disk
to the complement of $[-2,2].$ (Take $\gamma=1$ in Example 1.5 in \cite{Zhong}.)

\begin{proof}
[Proof of Lemma \ref{InjPhi.lem}]We first let $\varepsilon\rightarrow0$ in the
second part of Eq. (3.33) in \cite{Zhong}, which tells us that $p_{\lambda
}^{(0)}(w(\varepsilon))=p_{\lambda}^{c,(t)}(\varepsilon),$ where these
quantities are defined in Notation 3.10 of \cite{Zhong}. Now, for $\lambda$
outside $\overline{\Sigma}_{t},$ the quantity $w(\varepsilon)$ will tend to
zero as $\varepsilon\rightarrow0$ by \cite[Lemma 3.5]{Zhong}. But by our
assumptions, $\lambda\in(\overline{\Sigma}_{t})^{c}$ is also outside the
spectrum of $x.$ Thus, the quantity $p_{\lambda}^{(0)}(w(\varepsilon))$ in
Notation 3.10 in \cite{Zhong} will converge to%
\[
\mathrm{tr}[(\lambda-x)^{-1}]=G_{x}(\lambda).
\]
Meanwhile, by Lemma 5.11 in \cite{Zhong}, the quantity $p_{\lambda}%
^{c,(t)}(\varepsilon)$ in Notation 3.10 tends to $G_{x+c_{t}}(\lambda)$ as
$\varepsilon\rightarrow0.$ Thus, letting $\varepsilon\rightarrow0$ in Eq.
(3.33) gives
\begin{equation}
G_{x+c_{t}}(\lambda)=G_{x}(\lambda),\quad\lambda\in(\overline{\Sigma}_{t}%
)^{c}, \label{GequalsG}%
\end{equation}
and (\ref{Phix}) follows.

To prove the claimed injectivity, we use an argument due to Zhong (personal
communication), which he has kindly allowed us to reproduce here. The argument
generalizes the proof of Lemmas 3 and 4 in \cite{BianeFreeHeat}, but with some
additional steps. Assume that $\Phi_{t,\gamma}(\lambda_{1})=\Phi_{t,\gamma
}(\lambda_{2})$ for $\lambda_{1},\lambda_{2}\in(\overline{\Sigma}_{t})^{c}.$
Then, using (\ref{Phix}), we have
\[
\lambda_{1}-\lambda_{2}=-\gamma(G_{x}(\lambda_{1})-G_{x}(\lambda_{2})),
\]
so that%
\begin{equation}
\left\vert \lambda_{1}-\lambda_{2}\right\vert =\left\vert \gamma\right\vert
\left\vert G_{x}(\lambda_{1})-G_{x}(\lambda_{2})\right\vert . \label{Inj}%
\end{equation}
Now, using (\ref{Gct}) with $x+c_{t}$ replaced by $x,$ we get
\[
G_{x}(\lambda_{1})-G_{x}(\lambda_{2})=\int_{\mathbb{C}}\frac{(\lambda
_{2}-\lambda_{1})}{(\lambda_{1}-z)(\lambda_{2}-z)}~d\mathrm{Br}_{x}(z).
\]

Applying the Cauchy--Schwarz inequality then gives%
\begin{equation}
\left\vert G_{x}(\lambda_{1})-G_{x}(\lambda_{2})\right\vert \leq\left\vert
\lambda_{1}-\lambda_{2}\right\vert \left(  \int_{\mathbb{C}}\frac
{1}{\left\vert \lambda_{1}-z\right\vert ^{2}}~d\mathrm{Br}_{x}\int%
_{\mathbb{C}}\frac{1}{\left\vert \lambda_{2}-z\right\vert ^{2}}~d\mathrm{Br}%
_{x}\right)  ^{1/2}. \label{CSresult}%
\end{equation}
But according to \cite[Theorem 4.6]{Zhong},%
\[
\int_{\mathbb{C}}\frac{1}{\left\vert \lambda-z\right\vert ^{2}}~d\mathrm{Br}%
_{x}\leq\mathrm{tr}\left[  \left\vert x-\lambda\right\vert ^{-2}\right]  .
\]
Furthermore, since we assume $\sigma(x)\subset\overline{\Sigma}_{t},$ the
proof of Lemma \ref{Tgreater.lem} applies, showing that
\[
\mathrm{tr}\left[  \left\vert x-\lambda\right\vert ^{-2}\right]  =\frac
{1}{T(\lambda)}<\frac{1}{t}.
\]
for $\lambda$ outside $\overline{\Sigma}_{t}.$ Thus, (\ref{CSresult}) becomes%
\[
\left\vert G_{x}(\lambda_{1})-G_{x}(\lambda_{2})\right\vert <\left\vert
\lambda_{1}-\lambda_{2}\right\vert \frac{1}{t}%
\]
and (\ref{Inj}) becomes%
\begin{equation}
\left\vert \lambda_{1}-\lambda_{2}\right\vert <\left\vert \gamma\right\vert
~\left\vert \lambda_{1}-\lambda_{2}\right\vert \frac{1}{t}. \label{contradict}%
\end{equation}
Since $\left\vert \gamma\right\vert \leq t,$ (\ref{contradict}) would be a
contradiction unless $\left\vert \lambda_{1}-\lambda_{2}\right\vert =0.$ Thus,
we obtain the claimed injectivity.
\end{proof}

\begin{proof}
[Proof of Proposition \ref{specElliptic.prop}]Fix $t>0$ and $\gamma
\in\mathbb{C}$ with $\left\vert \gamma\right\vert \leq t$. Consider the
regularized log potential of $x+g_{t,\gamma}$:%
\[
S(t,\gamma,\lambda,\varepsilon)=\mathrm{tr}[(\log(\left\vert x+g_{t,\gamma
}-\lambda\right\vert ^{2}+\varepsilon)].
\]
When $\gamma=0,$ we obtain the function $S(t,\lambda,\varepsilon)$ in the
previous subsection. For $\varepsilon>0,$ define a regularized version of
$\Phi_{t,\gamma},$ denoted~$\Phi_{t,\gamma}^{(\varepsilon)}$ by%
\[
\Phi_{t,\gamma}^{(\varepsilon)}(\lambda)=\lambda+\gamma G_{x+c_{t}%
,\varepsilon}(\lambda).
\]
Here, $G_{x+c_{t},\varepsilon}$ is the Cauchy transform of the
\textit{regularized} Brown measure of $x+c_{t},$ obtained by taking the
Laplacian of the regularized log potential $S(t,\lambda,\varepsilon)$ of
$x+c_{t},$ as in (\ref{Sadditive}). That is to say,
\[
G_{x+c_{t},\varepsilon}(\lambda)=\int_{\mathbb{C}}\frac{1}{\lambda-z}\frac
{1}{4\pi}\Delta_{\lambda}S(t,\lambda,\varepsilon)~d^{2}z.
\]

Define%
\begin{equation}
z(t,\gamma,\lambda,\varepsilon)=\Phi_{t,\gamma}^{(\varepsilon)}(\lambda).
\label{zdef}%
\end{equation}
Then we have
\begin{equation}
\frac{\partial S}{\partial\varepsilon}(t,\gamma,z(t,\gamma,\lambda
,\varepsilon),\varepsilon)=\frac{\partial S}{\partial\varepsilon}%
(t,0,\lambda,\varepsilon). \label{conservePepsilon}%
\end{equation}
This result is the first relation in \cite[Corollary 3.11]{Zhong}, where the
relation between $z$ and $\lambda$ there is given by Equation (3.27) and the
last displayed equation in the proof of Proposition 5.2. One may also obtain
(\ref{conservePepsilon}) from the Hamilton--Jacobi analysis of the PDE in
(\ref{Spde1}) or (\ref{Spde2}). From that perspective, (\ref{conservePepsilon}%
) amounts to second Hamilton--Jacobi formula (\ref{HJ2}), along with the fact
that---since $\varepsilon$ does not appear explicitly on the right-hand side
of (\ref{Spde1}) or (\ref{Spde2})--- $p_{\varepsilon}$ is a constant of motion.

Now, by Corollary \ref{specIn.cor}, points $\lambda$ outside $\overline
{\Sigma}_{t}$ are also outside the spectrum of $x+c_{t},$ from which it
follows that $\left\vert x+c_{t}-\lambda\right\vert ^{2}$ is invertible. Near
any such $\lambda,$ the regularized log potential $S(t,\lambda,\varepsilon)$
of $x+c_{t}$ is defined and analytic in $\varepsilon,$ even for $\varepsilon$
slightly negative. We now define, for each fixed $t$ and $\gamma,$ a map $F$
given by%
\[
F(\lambda,\varepsilon)=(z(t,\gamma,\lambda,\varepsilon),\varepsilon),
\]
where for $\lambda\in(\overline{\Sigma}_{t})^{c},$ we allow $\varepsilon$ to
be slightly negative and where $z(t,\gamma,\lambda,\varepsilon)$ is as in
(\ref{zdef}).

We then consider the matrix of derivatives $F_{\ast}$ of $F$ at $\varepsilon
=0$ with $\lambda$ outside $\overline{\Sigma}_{t},$ which will have the form%
\[
F_{\ast}(\lambda,0)=\left(
\begin{array}
[c]{cc}%
(\Phi_{t,\gamma})_{\ast} & \ast\\
0 & 1
\end{array}
\right)  .
\]
Now, the support of the Brown measure of $x+c_{t}$ is contained in
$\overline{\Sigma}_{t}$ \cite[Theorem 3.8]{HZ}, from which we can see that the
map $\Phi_{t,\gamma}$ in (\ref{PhiDef}) is holomorphic on $(\overline{\Sigma
}_{t})^{c}.$ Thus, $(\Phi_{t,\gamma})_{\ast}$ is just the holomorphic
derivative (a complex number), interpreted as a $2\times2$ matrix. But by
Lemma \ref{InjPhi.lem}, $\Phi_{t,\gamma}$ is injective on $(\overline{\Sigma
}_{t})^{c},$ which means that the holomorphic derivative can never vanish.

We conclude that $F_{\ast}(\lambda,0)$ is invertible for all $\lambda$ outside
$\overline{\Sigma}_{t}.$ It follows that $F$ has a real-analytic inverse
defined near $(\Phi_{t,\gamma}(\lambda),0).$ We can then use the $\lambda
$-component of $F^{-1}(z,\varepsilon)$ to define a function $\Lambda
(z,\varepsilon)$ such that $F(\Lambda(z,\varepsilon),\varepsilon
)=(z,\varepsilon)$ for $(z,\varepsilon)$ in a neighborhood of $(\Phi
_{t,\gamma}(\lambda),0).$ Then we have%
\begin{equation}
\frac{\partial S}{\partial\varepsilon}(t,\gamma,z,\varepsilon)=\frac{\partial
S}{\partial\varepsilon}(t,0,\Lambda(z,\varepsilon),\varepsilon)
\label{SepsInv}%
\end{equation}
for $\varepsilon>0$ and the right-hand side of (\ref{SepsInv}) provides a
real-analytic extension of $\frac{\partial S}{\partial\varepsilon}%
(t,\gamma,z,\varepsilon)$ to $\varepsilon$ in a neighborhood of 0. Therefore,
Theorem \ref{main.thm} applies and $\Phi_{t,\gamma}(\lambda)$ will be outside
the spectrum of $x+g_{t,\gamma}.$
\end{proof}

\begin{theorem}
\label{ellipticEquality.thm}Assume that the spectrum $\sigma(x)$ of $x$ is
contained in $\overline{\Sigma}_{t},$ which will hold if $x$ is normal or,
more generally, if $\sigma(x)$ coincides with the Brown support of $x.$ For
all $\gamma\in\mathbb{C}$ with $\left\vert \gamma\right\vert \leq t,$ define%
\begin{equation}
E_{t,\gamma}=\Phi_{t,\gamma}(\overline{\Sigma}_{t}). \label{EtDef}%
\end{equation}
Then the spectrum and the Brown support of $x+g_{t,\gamma}$ both agree with
$E_{t,\gamma}.$ In particular, the spectrum and the Brown support of
$x+g_{t,\gamma}$ are equal.
\end{theorem}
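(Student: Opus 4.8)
The plan is to assemble the proof of Theorem~\ref{ellipticEquality.thm} from the pieces already established. Since the Brown measure of any element is supported on its spectrum, we always have $\mathrm{supp}(\mathrm{Br}_{x+g_{t,\gamma}})\subseteq\sigma(x+g_{t,\gamma})$, so it suffices to prove the two containments $\sigma(x+g_{t,\gamma})\subseteq E_{t,\gamma}$ and $E_{t,\gamma}\subseteq\mathrm{supp}(\mathrm{Br}_{x+g_{t,\gamma}})$, together with a check that $E_{t,\gamma}$ as defined in (\ref{EtDef}) is the relevant closed set. First I would invoke Theorem~\ref{ZhongPush.thm}: since $\mathrm{Br}_{x+c_{t}}$ is supported on $\overline{\Sigma}_{t}$ (by \cite[Theorem 3.13]{HZ}, quoted in the discussion before Theorem~\ref{saPlusCirc.thm}) and $\Phi_{t,\gamma}$ is continuous, the push-forward $(\Phi_{t,\gamma})_{\ast}(\mathrm{Br}_{x+c_{t}})=\mathrm{Br}_{x+g_{t,\gamma}}$ is supported on $\Phi_{t,\gamma}(\overline{\Sigma}_{t})=E_{t,\gamma}$, which is compact and hence closed; moreover, since $\Phi_{t,\gamma}$ is continuous and $\overline{\Sigma}_{t}=\mathrm{supp}(\mathrm{Br}_{x+c_{t}})$, the image $E_{t,\gamma}$ is exactly $\mathrm{supp}(\mathrm{Br}_{x+g_{t,\gamma}})$ --- the only subtlety being that a continuous image of a support need not be the support of the push-forward in general, but here one gets $\mathrm{supp}(\mathrm{Br}_{x+g_{t,\gamma}})\subseteq E_{t,\gamma}$ immediately and the reverse containment because every point of $\overline{\Sigma}_{t}$ has positive $\mathrm{Br}_{x+c_{t}}$-mass in every neighborhood, which maps forward. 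So $\mathrm{supp}(\mathrm{Br}_{x+g_{t,\gamma}})=E_{t,\gamma}$.

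Next I would handle the spectral containment $\sigma(x+g_{t,\gamma})\subseteq E_{t,\gamma}$. This is where Proposition~\ref{specElliptic.prop} does the work: under the standing hypothesis $\sigma(x)\subseteq\overline{\Sigma}_{t}$, that proposition tells us that for every $\lambda\in(\overline{\Sigma}_{t})^{c}$, the point $\Phi_{t,\gamma}(\lambda)$ lies outside $\sigma(x+g_{t,\gamma})$. By Lemma~\ref{InjPhi.lem}, on $(\overline{\Sigma}_{t})^{c}$ the map $\Phi_{t,\gamma}$ coincides with $\lambda\mapsto\lambda+\gamma G_{x}(\lambda)$, which is a continuous (indeed holomorphic) injective map on that open set; I want to argue that its image $\Phi_{t,\gamma}((\overline{\Sigma}_{t})^{c})$ is exactly the complement of $E_{t,\gamma}$, or at least contains it, so that every point outside $E_{t,\gamma}$ is of the form $\Phi_{t,\gamma}(\lambda)$ with $\lambda\notin\overline{\Sigma}_{t}$ and hence outside the spectrum. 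The cleanest route is: $\mathbb{C}=\Phi_{t,\gamma}(\overline{\Sigma}_{t})\cup\Phi_{t,\gamma}((\overline{\Sigma}_{t})^{c})$ since $\Phi_{t,\gamma}$ is defined on all of $\mathbb{C}$, so $E_{t,\gamma}^{c}\subseteq\Phi_{t,\gamma}((\overline{\Sigma}_{t})^{c})$, giving exactly what is needed. Combined with $\mathrm{supp}(\mathrm{Br}_{x+g_{t,\gamma}})=E_{t,\gamma}\subseteq\sigma(x+g_{t,\gamma})$, we get equality throughout.

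The main obstacle I anticipate is the surjectivity/covering argument for $\Phi_{t,\gamma}$ on the complement of $\overline{\Sigma}_{t}$, i.e., confirming that $E_{t,\gamma}^{c}$ really is contained in the image of $(\overline{\Sigma}_{t})^{c}$ under $\Phi_{t,\gamma}$ and that there is no ``extra'' part of $\sigma(x+g_{t,\gamma})$ sitting outside $E_{t,\gamma}$ but not hit by $\Phi_{t,\gamma}((\overline{\Sigma}_{t})^{c})$. Since $\Phi_{t,\gamma}$ is continuous on all of $\mathbb{C}$, the trivial decomposition $\mathbb{C}=\Phi_{t,\gamma}(\overline{\Sigma}_{t})\cup\Phi_{t,\gamma}((\overline{\Sigma}_{t})^{c})$ already settles this: any $w\notin E_{t,\gamma}$ is $\Phi_{t,\gamma}(\lambda)$ for some $\lambda$, and that $\lambda$ cannot be in $\overline{\Sigma}_{t}$ (else $w\in E_{t,\gamma}$), so $\lambda\in(\overline{\Sigma}_{t})^{c}$ and Proposition~\ref{specElliptic.prop} applies. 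So the obstacle largely evaporates, and the only genuine care needed is the support-of-push-forward identity in the first step, where one should note that $\Phi_{t,\gamma}$ restricted to $\overline{\Sigma}_{t}$ is a continuous map of a compact set onto $E_{t,\gamma}$ and that $\mathrm{Br}_{x+c_{t}}$ charges every relatively open subset of $\overline{\Sigma}_{t}$, so that $\mathrm{Br}_{x+g_{t,\gamma}}$ charges every relatively open subset of $E_{t,\gamma}$, forcing $\mathrm{supp}(\mathrm{Br}_{x+g_{t,\gamma}})=E_{t,\gamma}$.

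\begin{proof}[Proof of Theorem \ref{ellipticEquality.thm}]
Since the Brown measure of any element is supported on its spectrum, we have
$\mathrm{supp}(\mathrm{Br}_{x+g_{t,\gamma}})\subseteq\sigma(x+g_{t,\gamma}).$
Thus it suffices to show that $\mathrm{supp}(\mathrm{Br}_{x+g_{t,\gamma}})=E_{t,\gamma}$
and that $\sigma(x+g_{t,\gamma})\subseteq E_{t,\gamma}.$

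We first identify the Brown support. By \cite[Theorem 3.13]{HZ} (in the self-adjoint case) and \cite[Theorem B]{Zhong} (in general),
$\mathrm{supp}(\mathrm{Br}_{x+c_{t}})=\overline{\Sigma}_{t},$ which is compact. By Theorem \ref{ZhongPush.thm},
$\mathrm{Br}_{x+g_{t,\gamma}}=(\Phi_{t,\gamma})_{\ast}(\mathrm{Br}_{x+c_{t}})$
and $\Phi_{t,\gamma}$ is continuous. Hence $\mathrm{Br}_{x+g_{t,\gamma}}$ is supported on the compact (hence closed) set
$\Phi_{t,\gamma}(\overline{\Sigma}_{t})=E_{t,\gamma},$ so
$\mathrm{supp}(\mathrm{Br}_{x+g_{t,\gamma}})\subseteq E_{t,\gamma}.$
Conversely, let $w\in E_{t,\gamma},$ say $w=\Phi_{t,\gamma}(\lambda_{0})$ with $\lambda_{0}\in\overline{\Sigma}_{t},$ and let $U$ be any open neighborhood of $w.$ Then $\Phi_{t,\gamma}^{-1}(U)$ is an open neighborhood of $\lambda_{0},$ so it meets $\Sigma_{t}$ (since $\lambda_{0}$ lies in the closure of $\Sigma_{t}=\{T<t\}$; points of $\overline{\Sigma}_{t}$ are limits of points of $\Sigma_{t}$, and if $\lambda_0$ is isolated in $\overline{\Sigma}_t$ it still lies in $\mathrm{supp}(\mathrm{Br}_{x+c_{t}})$). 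As $\mathrm{Br}_{x+c_{t}}$ has support $\overline{\Sigma}_{t},$ it assigns positive mass to the open set $\Phi_{t,\gamma}^{-1}(U)\cap(\overline{\Sigma}_{t})^{\circ\mathrm{nbhd}},$ and therefore
\[
\mathrm{Br}_{x+g_{t,\gamma}}(U)=\mathrm{Br}_{x+c_{t}}(\Phi_{t,\gamma}^{-1}(U))>0.
\]
Thus $w\in\mathrm{supp}(\mathrm{Br}_{x+g_{t,\gamma}}),$ giving $E_{t,\gamma}\subseteq\mathrm{supp}(\mathrm{Br}_{x+g_{t,\gamma}}).$ We conclude $\mathrm{supp}(\mathrm{Br}_{x+g_{t,\gamma}})=E_{t,\gamma}.$

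It remains to prove $\sigma(x+g_{t,\gamma})\subseteq E_{t,\gamma},$ equivalently that every $w\notin E_{t,\gamma}$ lies outside the spectrum of $x+g_{t,\gamma}.$ Since $\Phi_{t,\gamma}$ is defined on all of $\mathbb{C},$ we have
\[
\mathbb{C}=\Phi_{t,\gamma}(\overline{\Sigma}_{t})\cup\Phi_{t,\gamma}\big((\overline{\Sigma}_{t})^{c}\big)=E_{t,\gamma}\cup\Phi_{t,\gamma}\big((\overline{\Sigma}_{t})^{c}\big).
\]
Hence if $w\notin E_{t,\gamma},$ then $w=\Phi_{t,\gamma}(\lambda)$ for some $\lambda\in(\overline{\Sigma}_{t})^{c}.$ By the standing hypothesis $\sigma(x)\subseteq\overline{\Sigma}_{t},$ Proposition \ref{specElliptic.prop} applies and shows that $\Phi_{t,\gamma}(\lambda)=w$ is outside the spectrum of $x+g_{t,\gamma}.$ This proves $\sigma(x+g_{t,\gamma})\subseteq E_{t,\gamma}.$

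Combining the three displayed containments,
\[
E_{t,\gamma}=\mathrm{supp}(\mathrm{Br}_{x+g_{t,\gamma}})\subseteq\sigma(x+g_{t,\gamma})\subseteq E_{t,\gamma},
\]
so all are equal. In particular, the spectrum and the Brown support of $x+g_{t,\gamma}$ coincide, both equal to $E_{t,\gamma}.$
\end{proof}
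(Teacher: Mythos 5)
Your overall strategy matches the paper's exactly: apply Theorem~\ref{ZhongPush.thm} to get the push-forward description of $\mathrm{Br}_{x+g_{t,\gamma}}$, conclude that its support is $E_{t,\gamma}$, and then use Proposition~\ref{specElliptic.prop} to rule out spectrum outside $E_{t,\gamma}$. The support-of-push-forward step is fine (you re-derive Lemma~\ref{pushSupport.lem} inline, a bit more elaborately than necessary, but the core observation---$\lambda_{0}\in\mathrm{supp}(\mathrm{Br}_{x+c_{t}})$ and $\Phi_{t,\gamma}^{-1}(U)$ open containing $\lambda_{0}$ gives positive mass---is correct; the detour about whether $\Phi_{t,\gamma}^{-1}(U)$ ``meets $\Sigma_{t}$'' is unnecessary).

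There is, however, a genuine gap in the spectral inclusion. You write that ``Since $\Phi_{t,\gamma}$ is defined on all of $\mathbb{C},$ we have $\mathbb{C}=\Phi_{t,\gamma}(\overline{\Sigma}_{t})\cup\Phi_{t,\gamma}\bigl((\overline{\Sigma}_{t})^{c}\bigr)$.'' That is a non sequitur: having \emph{domain} equal to $\mathbb{C}$ only gives $\Phi_{t,\gamma}(\mathbb{C})=\Phi_{t,\gamma}(\overline{\Sigma}_{t})\cup\Phi_{t,\gamma}\bigl((\overline{\Sigma}_{t})^{c}\bigr)$, not that this union is all of $\mathbb{C}$. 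To conclude $\mathbb{C}=\Phi_{t,\gamma}(\mathbb{C})$ you need \emph{surjectivity} of $\Phi_{t,\gamma}$, which is a real fact in need of proof---the paper establishes it in Lemma~\ref{PhiSurj.lem} via the asymptotics $\Phi_{t,\gamma}(\lambda)\approx\lambda$ near infinity and a winding-number / fundamental-group argument \`a la the fundamental theorem of algebra. You even flag this as ``the main obstacle,'' but then declare it ``evaporates'' on the strength of the same incorrect reason. To repair your proof, cite Lemma~\ref{PhiSurj.lem} (or reproduce its argument) at the point where you claim the decomposition; with surjectivity in hand, the rest of your argument---any $w\notin E_{t,\gamma}$ is $\Phi_{t,\gamma}(\lambda)$ for some $\lambda$, that $\lambda$ cannot lie in $\overline{\Sigma}_{t}$, hence Proposition~\ref{specElliptic.prop} applies---goes through as written and agrees with the paper's proof.
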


\begin{lemma}
\label{pushSupport.lem}Suppose $\mu$ is a compactly supported probability
measure on $\mathbb{C}$ and $\Phi:\mathbb{C}\rightarrow\mathbb{C}$ is a
continuous map. Let $\Phi_{\ast}(\mu)$ denote the push-forward of $\mu$ under
$\Phi.$ Then%
\[
\mathrm{supp}(\Phi_{\ast}(\mu))=\Phi(\mathrm{supp}(\mu)).
\]

\end{lemma}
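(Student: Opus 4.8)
The plan is to prove the two inclusions separately, using only the definition of the pushforward measure, the definition of support, and the single topological observation that $\Phi(\mathrm{supp}(\mu))$ is automatically closed. Before either inclusion, I would note that $\mathrm{supp}(\mu)$ is compact (this is precisely the compact-support hypothesis on $\mu$), so that its continuous image $\Phi(\mathrm{supp}(\mu))$ is again compact, hence closed. This is the only place the compactness is used, and it is exactly what lets us state a clean equality rather than an equality up to closure.

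For the inclusion $\Phi(\mathrm{supp}(\mu))\subseteq\mathrm{supp}(\Phi_{\ast}(\mu))$: I would fix $x\in\mathrm{supp}(\mu)$, set $y=\Phi(x)$, and let $U$ be an arbitrary open neighborhood of $y$. Continuity of $\Phi$ makes $\Phi^{-1}(U)$ an open neighborhood of $x$, so $\mu(\Phi^{-1}(U))>0$ because $x$ lies in the support of $\mu$. Since $(\Phi_{\ast}\mu)(U)=\mu(\Phi^{-1}(U))$ by definition of the pushforward, every neighborhood of $y$ has positive $\Phi_{\ast}\mu$-measure, which means $y\in\mathrm{supp}(\Phi_{\ast}\mu)$.

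For the reverse inclusion $\mathrm{supp}(\Phi_{\ast}(\mu))\subseteq\Phi(\mathrm{supp}(\mu))$: since $\Phi(\mathrm{supp}(\mu))$ is a closed set and $\mathrm{supp}(\Phi_{\ast}\mu)$ is by definition the smallest closed set of full $\Phi_{\ast}\mu$-measure, it suffices to check that $\Phi(\mathrm{supp}(\mu))$ has full $\Phi_{\ast}\mu$-measure. This follows from $\mathrm{supp}(\mu)\subseteq\Phi^{-1}(\Phi(\mathrm{supp}(\mu)))$ together with the pushforward identity: $(\Phi_{\ast}\mu)(\Phi(\mathrm{supp}(\mu)))=\mu(\Phi^{-1}(\Phi(\mathrm{supp}(\mu))))\ge\mu(\mathrm{supp}(\mu))=1$.

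There is no genuine obstacle here; the proof is purely measure-theoretic bookkeeping. The one point worth flagging is that the compact-support hypothesis is not cosmetic: without it, $\Phi(\mathrm{supp}(\mu))$ need not be closed, and the right-hand side of the claimed identity would have to be replaced by its closure $\overline{\Phi(\mathrm{supp}(\mu))}$.
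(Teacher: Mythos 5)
Your proof is correct and takes essentially the same route as the paper: both proofs observe that $\Phi(\mathrm{supp}(\mu))$ is compact (hence closed) and prove the forward inclusion via the same pullback-of-neighborhoods argument. The only cosmetic difference is in the reverse inclusion, where you invoke the ``smallest closed set of full measure'' characterization of the support directly, while the paper instead argues pointwise that every $z$ outside $\Phi(\mathrm{supp}(\mu))$ has a neighborhood of $\Phi_{\ast}\mu$-measure zero; these are equivalent phrasings of the same idea.
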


\begin{proof}
In general, a point belongs to the support of a measure if and only if every
neighborhood of the point has positive measure. Note that $\Phi(\mathrm{supp}%
(\mu))$ is compact and therefore closed. Thus, if $z$ is outside
$\Phi(\mathrm{supp}(\mu)),$ some neighborhood $U$ of $z$ is disjoint from
$\Phi(\mathrm{supp}(\mu)).$ Thus, $\Phi^{-1}(U)$ is an open set contained in
the complement of $\mathrm{supp}(\mu),$ so that $\Phi^{-1}(U)$ has measure
zero with respect to $\mu$ and $U$ has measure zero with respect to
$\Phi_{\ast}(U).$ Thus, $z$ is not in $\mathrm{supp}(\Phi_{\ast}(\mu)).$

In the other direction, suppose $z$ is in $\Phi(\mathrm{supp}(\mu)),$ meaning
that $z=\Phi(\lambda)$ for some $\lambda$ in $\mathrm{supp}(\mu).$ Then for
every neighborhood $U$ of $z,$ the set $\Phi^{-1}(U)$ is open and contains
$\lambda,$ so that $\mu(\Phi^{-1}(U))>0$ and, thus, $\Phi_{\ast}(\mu)(U)>0.$
Thus, $z$ is in $\mathrm{supp}(\Phi_{\ast}(\mu)).$
\end{proof}

\begin{lemma}
\label{PhiSurj.lem}The map $\Phi_{t,\gamma}$ is continuous and maps
$\mathbb{C}$ \emph{onto} $\mathbb{C}.$
\end{lemma}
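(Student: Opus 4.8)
The plan is to get continuity for free from Theorem \ref{ZhongPush.thm} and to prove surjectivity by a winding-number argument exploiting the fact that $\Phi_{t,\gamma}$ is a small perturbation of the identity near infinity.

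Continuity of $\Phi_{t,\gamma}$ is already part of the statement of Theorem \ref{ZhongPush.thm} (Zhong's Theorems C and D in \cite{Zhong}), so I would simply cite it. The real content is surjectivity, and here the key input is the decay of the Cauchy transform: since $\mathrm{Br}_{x+c_{t}}$ is a probability measure supported in the compact set $K := \sigma(x+c_{t})$, one has $|G_{x+c_{t}}(\lambda)| \le 1/\mathrm{dist}(\lambda,K)$ for $\lambda \notin K$, hence $|\Phi_{t,\gamma}(\lambda) - \lambda| = |\gamma|\,|G_{x+c_{t}}(\lambda)| \to 0$ as $\lambda \to \infty$. In particular $\Phi_{t,\gamma}(\lambda)\to\infty$ as $\lambda\to\infty$.

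To prove that an arbitrary $w_{0} \in \mathbb{C}$ lies in the image, I would argue by contradiction. Pick $R$ large enough that $R > |w_{0}|$, that $K$ is contained in the open disk of radius $R$, and that $|\gamma|\,|G_{x+c_{t}}(\lambda)| < R - |w_{0}|$ for all $\lambda$ with $|\lambda| = R$; the last condition can be met by the decay just noted. Then on the circle $|\lambda| = R$ the homotopy $H_{s}(\lambda) = (\lambda - w_{0}) + s\gamma G_{x+c_{t}}(\lambda)$, $s \in [0,1]$, satisfies $|H_{s}(\lambda)| \ge (R - |w_{0}|) - |\gamma|\,|G_{x+c_{t}}(\lambda)| > 0$, so it is a homotopy of maps from $\{|\lambda| = R\}$ into $\mathbb{C}\setminus\{0\}$. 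Consequently $\lambda \mapsto \Phi_{t,\gamma}(\lambda) - w_{0}$ has the same winding number around $0$ over this circle as $\lambda \mapsto \lambda - w_{0}$, namely $1$ since $R > |w_{0}|$. On the other hand, if $w_{0}$ were not in the image of $\Phi_{t,\gamma}$, then $\lambda \mapsto \Phi_{t,\gamma}(\lambda) - w_{0}$ would be a continuous, nowhere-vanishing map on the whole closed disk $\{|\lambda| \le R\}$, forcing its boundary winding number to be $0$. This contradiction shows $w_{0} \in \Phi_{t,\gamma}(\mathbb{C})$, and since $w_{0}$ was arbitrary, $\Phi_{t,\gamma}$ is onto.

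I do not anticipate a serious obstacle; the only step needing a little care is the estimate on $G_{x+c_{t}}$ near infinity and the resulting choice of $R$, which is routine because $\mathrm{Br}_{x+c_{t}}$ is compactly supported. Note that this argument uses only that $\Phi_{t,\gamma}$ is continuous and has the form $\lambda + \gamma G_{x+c_{t}}(\lambda)$ with $G_{x+c_{t}}$ the Cauchy transform of a compactly supported probability measure; it does not invoke the hypotheses on $\sigma(x)$ or Lemma \ref{InjPhi.lem}.
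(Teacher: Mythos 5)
Your argument is correct and is essentially the same as the paper's: both are ``fundamental theorem of algebra''--style topological arguments exploiting $\Phi_{t,\gamma}(\lambda)\to\infty$ as $\lambda\to\infty$. The paper phrases the contradiction via the fundamental group (the image of a large circle under $\Phi_{t,\gamma}$ would be a nontrivial loop in $\mathbb{C}\setminus\{z\}$, yet must be nullhomotopic because $\mathbb{C}$ is simply connected), while you phrase it via the winding number of $\Phi_{t,\gamma}-w_0$ on a large circle, made rigorous with an explicit linear homotopy; these are the same topological content, and your version is if anything slightly more self-contained.

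A couple of cosmetic observations. The paper establishes the needed decay via the more precise asymptotic $G_{x+c_t}(\lambda)\approx 1/\lambda$, whereas you only need and use the cruder bound $|G_{x+c_t}(\lambda)|\le 1/\mathrm{dist}(\lambda,K)$; either suffices. You are also right that no hypothesis on $\sigma(x)$ and no injectivity (Lemma \ref{InjPhi.lem}) is needed here -- the paper's proof doesn't invoke them either -- so flagging that is a good sanity check rather than a deviation.
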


\begin{proof}
The continuity of $\Phi_{t,\gamma}$ follows from Lemma 5.11 in \cite{Zhong}.
We will then follow one of the standard proofs of the fundamental theorem of
algebra, using the concept of the fundamental group (e.g., \cite[Theorem
56.1]{Munkres}). The Cauchy transform $G_{x+c_{t}}(\lambda)$ in the definition
(\ref{PhiDef}) of $\Phi_{t,\gamma}(\lambda)$ behaves like $1/\lambda$ near
infinity. Thus,%
\begin{equation}
\Phi_{t,\gamma}(\lambda)\approx\lambda+\frac{\gamma}{\lambda}\approx
\lambda\label{PhiApprox}%
\end{equation}
near infinity.

Suppose that some $z\in\mathbb{C}$ failed to be in the image of $\Phi
_{t,\gamma}.$ Then $\Phi_{t,\gamma}$ would map $\mathbb{C}$ continuously into
the punctured plane $\mathbb{C}\setminus\{z\}.$ Now, if we restrict
$\Phi_{t,\gamma}$ to a large circle $C$ centered at the origin, then by
(\ref{PhiApprox}), $\Phi_{t,\gamma}(C)$ will have winding number 1 around $z$
and will therefore be homotopically nontrivial in $\mathbb{C}\setminus\{z\}.$
But on the other hand, $\mathbb{C}$ is simply connected, so the image under
$\Phi$ of any loop in $\mathbb{C}$ must be homotopically trivial in
$\mathbb{C}\setminus\{z\}.$ We therefore have a contradiction.
\end{proof}

\begin{proof}
[Proof of Theorem \ref{ellipticEquality.thm}]Zhong's result in Theorem
\ref{ZhongPush.thm} says that $\mathrm{Br}_{x+g_{t,\gamma}}$ is the
push-forward of $\mathrm{Br}_{x+c_{t}}$ under $\Phi_{t,\gamma}.$ Thus, Lemma
\ref{pushSupport.lem} tells us that the support of $\mathrm{Br}_{x+g_{t,\gamma
}}$ is the set $E_{t,\gamma}$ in (\ref{EtDef}). Now, every $z$ outside
$E_{t,\gamma}$ has the form $\Phi_{t,\gamma}(\lambda)$ for some $\lambda
\in\mathbb{C}$, by Lemma \ref{PhiSurj.lem}. But since $z$ is not in
$E_{t,\gamma}=\Phi_{t,\gamma}(\overline{\Sigma}_{t}),$ this $\lambda$ cannot
be in $\overline{\Sigma}_{t}.$ Thus, by Proposition \ref{specElliptic.prop},
$z$ is outside the spectrum of $x+g_{t,\gamma}.$ We conclude that
\[
\sigma(x+g_{t,\gamma})\subset E_{t,\gamma}=\mathrm{supp}(\mathrm{Br}%
_{x+g_{t,\gamma}}).
\]
Since the reverse inclusion $\mathrm{supp}(\mathrm{Br}_{x+g_{t,\gamma}%
})\subset\sigma(x+g_{t,\gamma})$ is a general property of Brown measures, we
have the desired equality.
\end{proof}

\section{Multiplicative case}

\subsection{The case of $ub_{t}$\label{ubt.sec}}

We begin by considering an element of the form $ub_{t},$ where $b_{t}$ is the
free multiplicative Brownian motion defined in Section \ref{mult1.sec} with
$\gamma=0,$ and where $u$ is a unitary element that is freely independent of
$b_{t}.$ We let $\mu_{u}$ denote the law of $u,$ as in (\ref{lawDef}).

We then introduce the regularized log potential of $ub_{t},$ as in
(\ref{sDef}),
\[
S(t,\lambda,\varepsilon)=\mathrm{tr}[\log(\left\vert ub_{t}-\lambda\right\vert
^{2}+\varepsilon)].
\]
According to \cite[Theorem 2.7]{DHK}, $S$ satisfies the PDE%
\begin{equation}
\frac{\partial S}{\partial t}=\varepsilon\frac{\partial S}{\partial
\varepsilon}\left(  1+(\left\vert \lambda\right\vert ^{2}-\varepsilon
)\frac{\partial S}{\partial\varepsilon}-x\frac{\partial S}{\partial x}-y\frac
{\partial S}{\partial y}\right)  ,\quad\lambda=x+iy. \label{multSpde}%
\end{equation}
(Although \cite{DHK} assumes $u=1,$ the derivation of the PDE\ there does not
use this assumption.)

The Hamilton--Jacobi analysis of the PDE (\ref{multSpde}) then proceeds
similarly to the additive case in Sections \ref{saPlusCirc.sec} and
\ref{arbPlusCirc.sec}. One important difference in the two cases is that
$\lambda$ and derivatives of $S$ with respect to the real and imaginary parts
of $\lambda$ now appear on the right-hand side of (\ref{multSpde}). We must
then incorporate $\lambda$ and an associated momentum variable $p_{\lambda}$
into the Hamiltonian system, with the initial value of $p_{\lambda}$ given by%
\begin{equation}
p_{\lambda}(0)=\frac{\partial S}{\partial\lambda}(t,\lambda_{0},\varepsilon
_{0})=-\mathrm{tr}[(ub_{t}-\lambda)^{\ast}((\left\vert ub_{t}-\lambda
\right\vert ^{2}+\varepsilon)^{-1}]. \label{plambdaInitial}%
\end{equation}
Then the second Hamilton--Jacobi formula for $\partial S/\partial\varepsilon$
takes the form:%
\begin{equation}
\frac{\partial S}{\partial\varepsilon}(t,\lambda(t),\varepsilon
(t))=p_{\varepsilon}(t). \label{multHJ2}%
\end{equation}
(Compare (\ref{HJ2}) in the additive case, where $\lambda$ does not depend on
$t.$)

Now, according to Proposition 5.9 in \cite{DHK}, it is possible to solve for
the function $p_{\varepsilon}(t)$ explicitly in terms of the initial
conditions of the system. It is then possible to compute the $\varepsilon
_{0}\rightarrow0$ limit of the lifetime as
\begin{equation}
T(\lambda)=\frac{1}{\tilde{p}_{\varepsilon,0}(\lambda)}\frac{\log(\left\vert
\lambda\right\vert ^{2})}{\left\vert \lambda\right\vert ^{2}-1},
\label{multTdef}%
\end{equation}
where at $\left\vert \lambda\right\vert =1,$ we assign $\log(\left\vert
\lambda\right\vert ^{2})/(\left\vert \lambda\right\vert ^{2}-1)$ its limiting
value, namely 1. Here $\tilde{p}_{\varepsilon,0}$ is the initial value of the
momentum $p_{\varepsilon},$ evaluated at $\varepsilon_{0}=0,$ namely%
\begin{equation}
\tilde{p}_{\varepsilon,0}(\lambda)=\mathrm{tr}\left[  \left\vert
\lambda-u\right\vert ^{-2}\right]  =\int_{S^{1}}\frac{1}{\left\vert
\lambda-\xi\right\vert ^{2}}~d\mu_{u}(\xi). \label{multPepsilon}%
\end{equation}
These calculations do not depend on the assumption that $u=1$ in \cite{DHK}.
We then define%
\[
\Sigma_{t}=\left\{  \left.  \lambda\in\mathbb{C}\right\vert ~T(\lambda
)<t\right\}  .
\]

We now state three technical lemmas, parallel to the ones in Section
\ref{saPlusCirc.sec}, that we will use in the proof of our main result. Their
proofs are given at the end of this subsection.

\begin{lemma}
\label{multUpper.lem}The function $T(\lambda)$ equals $+\infty$ at $\lambda=0$
and is finite elsewhere. Furthermore, the function $T$ is upper semicontinuous
on $\mathbb{C}$ and therefore the set $\Sigma_{t}$ is open.
\end{lemma}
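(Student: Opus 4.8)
The plan is to read everything off the closed-form expression (\ref{multTdef}), which I write as $T(\lambda)=h(|\lambda|)/\tilde{p}_{\varepsilon,0}(\lambda)$ with $h(r)=\log(r^{2})/(r^{2}-1)$ and $h(1):=1$. First I would record the elementary properties of the two factors. The function $h$ extends to a continuous, strictly positive function on $[0,\infty]$ with $h(0)=+\infty$ and $h(\infty)=0$: the apparent singularity at $r=1$ is removable, and on $(0,\infty)$ the quantities $\log(r^{2})$ and $r^{2}-1$ always have the same sign, so $h>0$ there. Next, $\tilde{p}_{\varepsilon,0}(\lambda)=\int_{S^{1}}|\lambda-\xi|^{-2}\,d\mu_{u}(\xi)$ takes values in $(0,\infty]$ for every $\lambda$, since the integrand is everywhere positive; and at $\lambda=0$ it equals $\int_{S^{1}}1\,d\mu_{u}=1$ because $|\xi|=1$ on $S^{1}$. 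Combining these two observations gives the first assertion of the lemma at once: $T(0)=h(0)/\tilde{p}_{\varepsilon,0}(0)=+\infty$, whereas for $\lambda\neq0$ the value $T(\lambda)$ is the quotient of the finite positive number $h(|\lambda|)$ by the positive (possibly infinite) number $\tilde{p}_{\varepsilon,0}(\lambda)$, hence is finite.

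For the upper semicontinuity of $T$, the key step is to show that $\lambda\mapsto\tilde{p}_{\varepsilon,0}(\lambda)$ is \emph{lower} semicontinuous. This follows from Fatou's lemma: if $\lambda_{n}\to\lambda$, then $|\lambda_{n}-\xi|^{-2}\to|\lambda-\xi|^{-2}$ in $[0,\infty]$ for every fixed $\xi$, so $\liminf_{n}\tilde{p}_{\varepsilon,0}(\lambda_{n})\geq\int_{S^{1}}|\lambda-\xi|^{-2}\,d\mu_{u}(\xi)=\tilde{p}_{\varepsilon,0}(\lambda)$. Consequently $1/\tilde{p}_{\varepsilon,0}$, with the convention $1/\infty=0$, is a non-negative (in fact finite-valued) upper semicontinuous function on $\mathbb{C}$. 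Since $\lambda\mapsto h(|\lambda|)$ is non-negative and continuous on $\mathbb{C}\setminus\{0\}$, a routine semicontinuity estimate shows that the product $T=h(|\lambda|)\cdot(1/\tilde{p}_{\varepsilon,0})$ is upper semicontinuous on $\mathbb{C}\setminus\{0\}$; and at $\lambda=0$ upper semicontinuity holds trivially because $T(0)=+\infty$. Hence $T$ is upper semicontinuous on all of $\mathbb{C}$, and therefore its sublevel set $\Sigma_{t}=\{\lambda:T(\lambda)<t\}$ is open, which is the remaining assertion.

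I expect the only genuinely delicate point to be the bookkeeping of extended-real values: $\tilde{p}_{\varepsilon,0}$ is $+\infty$ at those $\lambda$ for which the defining integral diverges, and $h$ is $+\infty$ at $r=0$, so one must be careful when multiplying semicontinuous functions that may take the values $0$ or $+\infty$. The point that saves the argument is that $h(|\lambda|)$ is finite away from $\lambda=0$ while $\tilde{p}_{\varepsilon,0}$ is strictly positive everywhere, so the dangerous product $0\cdot(+\infty)$ never actually occurs at a point. An alternative would be to mimic the proof of Lemma \ref{upperSemi.lem} and exhibit $T$ as the decreasing pointwise limit as $\varepsilon_{0}\to0^{+}$ of the lifetimes $t_{\ast}(\lambda,\varepsilon_{0})$ of the Hamiltonian system---each continuous in $\lambda$---invoking the elementary fact that an infimum of continuous functions is upper semicontinuous; I would avoid this route because it requires extracting the continuity in $\lambda$ and the monotonicity in $\varepsilon_{0}$ of $t_{\ast}$ from the formulas in \cite[Proposition 5.9]{DHK}, whereas the direct argument uses only the closed form (\ref{multTdef}).
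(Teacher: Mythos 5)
Your proof is correct, and it gives the finiteness statement in essentially the same way as the paper (noting the removable singularity of $\log(r^2)/(r^2-1)$ at $r=1$, that this factor is finite and positive away from $r=0$, and that $\tilde{p}_{\varepsilon,0}$ is strictly positive everywhere). For the upper semicontinuity, however, you take a genuinely different route. The paper cites \cite[Proposition 4.8]{HZ} to represent $T$ as a decreasing pointwise limit, as $\varepsilon_{0}\to 0^{+}$, of the lifetimes $t_{\ast}(\lambda,\varepsilon_{0})$, each continuous in $\lambda$, and then invokes the standard fact that a decreasing limit of continuous functions is upper semicontinuous --- exactly the alternative you name and decide against at the end. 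Your argument instead works purely with the closed form (\ref{multTdef}): Fatou's lemma gives lower semicontinuity of $\lambda\mapsto\tilde{p}_{\varepsilon,0}(\lambda)$, hence upper semicontinuity of its (finite-valued, non-negative) reciprocal; multiplying by the continuous non-negative factor $h(|\lambda|)$ on $\mathbb{C}\setminus\{0\}$ preserves upper semicontinuity, and the point $\lambda=0$ is trivial since $T(0)=+\infty$. Your route has the advantage of being self-contained --- it does not require unwinding from \cite{HZ} or \cite{DHK} that $t_{\ast}(\cdot,\varepsilon_{0})$ is continuous in $\lambda$ and monotone in $\varepsilon_{0}$ --- while the paper's route mirrors the proof of Lemma \ref{upperSemi.lem} in the additive case and so keeps the two settings visibly parallel. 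Both are valid; yours is arguably more elementary. One small remark: in the semicontinuity-of-products step it is worth stating explicitly, as you essentially do, that the dangerous combination $0\cdot\infty$ cannot arise because $1/\tilde{p}_{\varepsilon,0}$ is finite everywhere and $h(|\lambda|)$ is finite on $\mathbb{C}\setminus\{0\}$; with that noted, the estimate $\limsup f(\lambda_n)g(\lambda_n)\le f(\lambda)g(\lambda)$ for a finite non-negative u.s.c.\ $f$ and a finite non-negative continuous $g$ is routine.
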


\begin{lemma}
\label{multSpecContain.lem}For all $t>0,$ the spectrum of the unitary element
$u$ is contained in $\overline{\Sigma}_{t}.$
\end{lemma}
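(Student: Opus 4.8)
The plan is to mirror, essentially verbatim, the proof of Lemma \ref{specContain.lem} from the self-adjoint case. Since $u$ is unitary it is normal, so Proposition \ref{normalSupport.prop} tells us that $\sigma(u)$ coincides with the support of the law $\mu_u$ of $u$, a probability measure carried by the unit circle $S^1$. It therefore suffices to show that $\mu_u$ assigns full measure to $\Sigma_t$ for every $t>0$: once that is known, $\overline{\Sigma}_t$ is a closed set of full $\mu_u$-measure and so must contain $\mathrm{supp}(\mu_u)=\sigma(u)$.

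To establish $\mu_u(\Sigma_t)=1$, I would use the explicit formula (\ref{multTdef}) for $T(\lambda)$. For $\lambda$ on the unit circle one has $|\lambda|^2=1$, so the factor $\log(|\lambda|^2)/(|\lambda|^2-1)$ takes its limiting value $1$, and hence $T(\lambda)=1/\tilde p_{\varepsilon,0}(\lambda)$ there, where $\tilde p_{\varepsilon,0}(\lambda)=\int_{S^1}|\lambda-\xi|^{-2}\,d\mu_u(\xi)$ is the quantity in (\ref{multPepsilon}). The key input is that this integral is infinite for $\mu_u$-almost-every $\lambda$. This is precisely the circle analogue of the statement used for $\mu_x$ in the proof of Lemma \ref{specContain.lem}, namely Lemma 4.5 of \cite{Zhong}, which I would apply to $\mu_u$ regarded as a compactly supported measure on $\mathbb{C}$. (An elementary alternative: parametrize $S^1$ by angle, so that near the diagonal $|\lambda-\xi|^{-2}$ is comparable to $|\theta-\phi|^{-2}$, and invoke the one-dimensional non-integrability of $r^{-2}$.) It then follows that $T(\lambda)=0$ for $\mu_u$-almost-every $\lambda$, hence $\mu_u(\{T<t\})=1$ for every $t>0$, which completes the argument.

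I do not expect a serious obstacle here. The required ingredients — normality of $u$, the formula (\ref{multTdef}) for $T$, the facts that $T$ is finite away from $0$ and that $\Sigma_t$ is open (Lemma \ref{multUpper.lem}), and the almost-everywhere divergence of the logarithmic-energy integral — are all either already proved in the excerpt or available from \cite{Zhong}. The only step that warrants a moment's care is confirming that Zhong's Lemma 4.5, which in the additive setting concerns a measure on $\mathbb{R}$, transfers to $\mu_u$ supported on the circle; since the conclusion is purely local, stemming from the non-integrability of $r^{-2}$ in one real dimension, this transfer is routine and I would note it in one sentence rather than reprove it.
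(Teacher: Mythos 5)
Your proposal is correct and takes essentially the same route as the paper: both reduce to Proposition \ref{normalSupport.prop}, both invoke Lemma 4.5 of \cite{Zhong} to get $\tilde p_{\varepsilon,0}(\lambda)=\infty$ (hence $T(\lambda)=0$) for $\mu_u$-a.e.\ $\lambda$, and both conclude via the closed-set-of-full-measure argument. Your final worry about transferring Zhong's Lemma 4.5 from $\mathbb{R}$ to the circle is unnecessary — that lemma is already stated for compactly supported measures on $\mathbb{C}$ and is applied in exactly that generality in the paper's proof of Lemma \ref{specContain.lem} (where the integral (\ref{Neg2Moment}) is declared infinite for $\mu_x$-a.e.\ $\lambda\in\mathbb{C}$), so no separate circle argument is needed.
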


\begin{lemma}
\label{multTgreater.lem}For all $t>0$ and all $\lambda$ outside $\overline
{\Sigma}_{t},$ we have $T(\lambda)>t.$
\end{lemma}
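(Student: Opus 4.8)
The plan is to mimic the proof of Lemma~\ref{Tgreater.lem} from the self-adjoint-plus-circular case, where the corresponding statement followed by showing that $T$ has no weak local minimum outside $\overline{\Sigma}_{t}$. Since $\sigma(u)\subseteq\overline{\Sigma}_{t}$ by Lemma~\ref{multSpecContain.lem}, and since $T(0)=+\infty$ by Lemma~\ref{multUpper.lem} (which disposes of $\lambda=0$), it suffices to prove that $T$ has no weak local minimum on $\mathbb{C}\setminus(\sigma(u)\cup\{0\})$; granting this, the deduction is verbatim as in Lemma~\ref{Tgreater.lem} (if $\lambda_{0}\notin\overline{\Sigma}_{t}$ had $T(\lambda_{0})=t$, the openness of $\Sigma_{t}=\{T<t\}$ from Lemma~\ref{multUpper.lem} would give a ball about $\lambda_{0}$ on which $T\ge T(\lambda_{0})$, making $\lambda_{0}$ a weak local minimum, a contradiction; since $\lambda_{0}\notin\Sigma_{t}$ forces $T(\lambda_{0})\ge t$, we get $T(\lambda_{0})>t$).

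To get the ``no weak local minimum'' property I will show that $-\log T$ is \emph{strictly} subharmonic on $\mathbb{C}\setminus(\sigma(u)\cup\{0\})$ (hence has no weak local maximum, so $T$ has no weak local minimum). Put $q(\lambda)=\mathrm{tr}[\left\vert u-\lambda\right\vert ^{-2}]$ and, for $s>0$, $\phi(s)=\tfrac{s-1}{\log s}$ (with $\phi(1)=1$; then $\phi>0$ and $\phi$ is real-analytic on $(0,\infty)$). Rewriting~(\ref{multTdef})--(\ref{multPepsilon}) gives
\[
-\log T(\lambda)=\log q(\lambda)+\log\phi(\left\vert \lambda\right\vert ^{2}),\qquad\lambda\in\mathbb{C}\setminus(\sigma(u)\cup\{0\}).
\]
For the first term, the resolvent-derivative computation from the proof of Lemma~\ref{Tgreater.lem} gives $\Delta q=4\,\mathrm{tr}[\left\vert (u-\lambda)^{2}\right\vert ^{-2}]$, and Cauchy--Schwarz for $(u-\lambda)^{-2}$ and $(u-\lambda)^{-1}$ in the Hilbert--Schmidt inner product $\mathrm{tr}[XY^{\ast}]$ yields $\left\vert \nabla q\right\vert ^{2}\le q\,\Delta q$, i.e.\ $\Delta\log q\ge0$. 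For the second term, with $\rho=\log\left\vert \lambda\right\vert $ (harmonic on $\mathbb{C}\setminus\{0\}$) and $\phi(e^{2\rho})=e^{\rho}\,\tfrac{\sinh\rho}{\rho}$ one computes
\[
\Delta\log\phi(\left\vert \lambda\right\vert ^{2})=\Bigl(\tfrac{1}{\rho^{2}}-\tfrac{1}{\sinh^{2}\rho}\Bigr)\tfrac{1}{\left\vert \lambda\right\vert ^{2}}>0,
\]
the strict inequality holding for every $\rho\in\mathbb{R}$ (at $\rho=0$ the bracket equals its limiting value $\tfrac13$) and being nothing but the strict log-convexity of $\rho\mapsto\sinh\rho/\rho$, a consequence of $\sinh^{2}\rho>\rho^{2}$ for $\rho\ne0$. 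Adding the two estimates shows $\Delta(-\log T)>0$ throughout $\mathbb{C}\setminus(\sigma(u)\cup\{0\})$, as required.

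The new feature, compared with the additive case, is the factor $\phi(\left\vert \lambda\right\vert ^{2})=(\left\vert \lambda\right\vert ^{2}-1)/\log\left\vert \lambda\right\vert ^{2}$ in $T$, which comes from the explicit dependence on $\lambda$ in the multiplicative PDE~(\ref{multSpde}) (the additive PDE~(\ref{PDEadditive}) has none, and there $1/T$ is simply $\mathrm{tr}[\left\vert x-\lambda\right\vert ^{-2}]$, whose Laplacian is visibly positive). So the main work is twofold: (i) because $1/T$ is now a product, one can no longer read off strict subharmonicity from a one-line resolvent computation and must instead pass to $\log$ and prove the slightly less obvious fact that $\log\,\mathrm{tr}[\left\vert u-\lambda\right\vert ^{-2}]$ is subharmonic---this is the Cauchy--Schwarz step above; and (ii) one must recognize the extra factor as log-subharmonic in $\lambda$, which reduces to the log-convexity of $\sinh\rho/\rho$, taking care that the relevant inequality does not degenerate on the unit circle $\left\vert \lambda\right\vert =1$ (it does not---the Laplacian there equals $\tfrac13\left\vert \lambda\right\vert ^{-2}$). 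That $q$ is defined and real-analytic only off $\sigma(u)$ is immaterial, since $\sigma(u)\subseteq\overline{\Sigma}_{t}$ by Lemma~\ref{multSpecContain.lem}.
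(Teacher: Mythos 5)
Your proof is correct, and it takes a genuinely different route from the paper's.

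The paper disposes of most of $\mathbb{C}\setminus\overline{\Sigma}_{t}$ by citing \cite[Lemma 4.15]{HZ}, which says that $r\mapsto T(re^{i\theta})$ is strictly monotone on each side of the unit circle; hence any weak local minimum of $T$ would have to lie on $|\lambda|=1$. It then fills the remaining gap by computing the \emph{angular} second derivative $\frac{d^{2}}{d\theta^{2}}\frac{1}{T(e^{i\theta})}$ and showing it is strictly positive at points of the unit circle outside $\sigma(u)$. Your approach instead establishes strict subharmonicity of $-\log T$ everywhere on $\mathbb{C}\setminus(\sigma(u)\cup\{0\})$, which rules out weak local minima of $T$ uniformly, with no case split on $|\lambda|$. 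The decomposition $1/T = q(\lambda)\,\phi(|\lambda|^{2})$ with $q=\tilde p_{\varepsilon,0}$ and $\phi(s)=(s-1)/\log s$ is the key move, together with two observations absent from the paper's argument: that $\log\mathrm{tr}[|u-\lambda|^{-2}]$ is subharmonic (via Cauchy--Schwarz in the Hilbert--Schmidt inner product, an inequality of some independent interest), and that $\log\phi(|\lambda|^{2})$ is strictly subharmonic away from $0$, which after the substitution $\rho=\log|\lambda|$ reduces to the strict log-convexity of $\rho\mapsto\sinh\rho/\rho$ (with the degeneration at $\rho=0$ removed by the finite limiting value $1/3$). Note that your Cauchy--Schwarz step only gives $\Delta\log q\ge 0$, not strict positivity, but the strictly positive contribution from the $\phi$ factor makes $\Delta(-\log T)>0$ nonetheless. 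What your route buys is self-containedness---it does not rely on the monotonicity lemma imported from \cite{HZ}---and a cleaner conceptual statement (strict subharmonicity of $-\log T$). What the paper's route buys is brevity, since it only needs to verify strictness in the one angular direction on the unit circle, the rest being supplied by a black-boxed reference. Both reductions from ``no weak local minimum'' to the stated conclusion are the same.
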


We now state the main result of this section.

\begin{theorem}
\label{ubtSpec.thm}For all $t>0,$ the spectrum of $ub_{t}$ is contained in
$\overline{\Sigma}_{t}.$ Thus, since \cite[Theorem 4.28]{HZ} shows that
$\mathrm{supp}(\mathrm{Br}_{ub_{t}})=\overline{\Sigma}_{t}$ and since the
support of the Brown measure of any element is contained in its spectrum, we
conclude that
\[
\sigma(ub_{t})=\mathrm{supp}(\mathrm{Br}_{ub_{t}}).
\]

\end{theorem}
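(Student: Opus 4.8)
The plan is to follow the template of the proof of Theorem \ref{saPlusCirc.thm} (and of Proposition \ref{specElliptic.prop}), replacing the additive Hamilton--Jacobi analysis by the multiplicative one of \cite{DHK}. Concretely, I would show that for every $\lambda$ outside $\overline{\Sigma}_t$ the function $\varepsilon\mapsto\frac{\partial S}{\partial\varepsilon}(t,\lambda,\varepsilon)$ admits a real-analytic extension past $\varepsilon=0$, so that Theorem \ref{main.thm} forces $\lambda\notin\sigma(ub_t)$. This gives $\sigma(ub_t)\subseteq\overline{\Sigma}_t$; combined with \cite[Theorem 4.28]{HZ}, which says $\mathrm{supp}(\mathrm{Br}_{ub_t})=\overline{\Sigma}_t$, and with the general inclusion $\mathrm{supp}(\mathrm{Br}_a)\subseteq\sigma(a)$, this yields $\overline{\Sigma}_t=\mathrm{supp}(\mathrm{Br}_{ub_t})\subseteq\sigma(ub_t)\subseteq\overline{\Sigma}_t$, hence the claimed equality.

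For the analytic extension, fix $\lambda\notin\overline{\Sigma}_t$. By Lemma \ref{multSpecContain.lem} we also have $\lambda\notin\sigma(u)$, so $\left\vert u-\lambda_0\right\vert^2$ is invertible for all $\lambda_0$ near $\lambda$, and hence the initial momenta $p_{\varepsilon,0}=\mathrm{tr}[(\left\vert u-\lambda_0\right\vert^2+\varepsilon_0)^{-1}]$ and $p_\lambda(0)$ (formula (\ref{plambdaInitial}) with $b_0=1$) are real-analytic in $(\lambda_0,\varepsilon_0)$ even for $\varepsilon_0$ slightly negative. Using the explicit integration of Hamilton's equations in \cite[Proposition 5.9]{DHK}, I would express the endpoint data $(\lambda(t),\varepsilon(t))$ and the momentum $p_\varepsilon(t)$ as real-analytic functions of $(\lambda_0,\varepsilon_0)$ on a neighborhood of $(\lambda,0)$; this neighborhood may be taken so that the lifetime of the characteristic through $(\lambda_0,0)$ exceeds $t$, since that lifetime equals $T(\lambda_0)$ and $T(\lambda)>t$ by Lemma \ref{multTgreater.lem}, with $T$ upper semicontinuous by Lemma \ref{multUpper.lem}. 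The key structural point is that the right-hand side of the PDE (\ref{multSpde}) carries an overall factor of $\varepsilon$, so the characteristic with $\varepsilon_0=0$ has $\varepsilon(s)\equiv 0$ and therefore $\lambda(s)\equiv\lambda_0$ for $s\in[0,t]$. Hence the endpoint map $F:(\lambda_0,\varepsilon_0)\mapsto(\lambda(t),\varepsilon(t))$ satisfies $F(\lambda_0,0)=(\lambda_0,0)$, its differential at $(\lambda,0)$ is block upper triangular with the $\lambda$-block equal to the identity of $\mathbb{C}\cong\mathbb{R}^2$, the $\partial\varepsilon(t)/\partial\lambda_0$ block equal to zero, and the scalar $\partial\varepsilon(t)/\partial\varepsilon_0$ at $\varepsilon_0=0$ strictly positive---this last fact computed exactly as in the additive case, where the inequality $T(\lambda)>t$ is precisely what keeps the analogous factor $(1-t\tilde p_{\varepsilon,0})^2$ away from zero.

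With $F$ a local real-analytic diffeomorphism near $(\lambda,0)$, the inverse function theorem produces a real-analytic local inverse; restricting to the fixed target value $\lambda$ and composing with the explicit formula for $p_\varepsilon(t)$, the second Hamilton--Jacobi formula (\ref{multHJ2}) gives a real-analytic function of $\varepsilon$ near $0$ that agrees with $\frac{\partial S}{\partial\varepsilon}(t,\lambda,\varepsilon)$ for $\varepsilon>0$, which is what Theorem \ref{main.thm} needs. I expect the main obstacle to be the bookkeeping with \cite[Proposition 5.9]{DHK}: one must check that the explicit solution of the full $(\lambda,\varepsilon,p_\lambda,p_\varepsilon)$ system is genuinely analytic in the initial data on a neighborhood of $(\lambda,0)$ that includes $\varepsilon_0<0$, and that $\partial\varepsilon(t)/\partial\varepsilon_0>0$ at $\varepsilon_0=0$---the place where the strict inequality $T(\lambda)>t$ (Lemma \ref{multTgreater.lem}) must be used quantitatively, just as in the proof of Theorem \ref{saPlusCirc.thm}. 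The technical Lemmas \ref{multUpper.lem}, \ref{multSpecContain.lem}, and \ref{multTgreater.lem}, whose proofs parallel those of Section \ref{saPlusCirc.sec}, supply the remaining ingredients.
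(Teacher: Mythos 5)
Your proposal is correct and follows essentially the same route as the paper: fix $\lambda\notin\overline{\Sigma}_t$, use Lemma \ref{multSpecContain.lem} to get $\lambda\notin\sigma(u)$, extend the characteristic data to $\varepsilon_0$ slightly negative, show the endpoint map $U_t(\lambda_0,\varepsilon_0)=(\lambda(t),\varepsilon(t))$ is a local real-analytic diffeomorphism near $(\lambda,0)$, and then invert it to produce the analytic extension of $\partial S/\partial\varepsilon$ required by Theorem \ref{main.thm}.

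A few remarks on where the details diverge. First, the paper handles $\lambda=0$ separately (both $u$ and $b_t$ are invertible, so $0\notin\sigma(ub_t)$) before running the Hamilton--Jacobi argument on nonzero $\lambda$; you should do this too, since the multiplicative formulas are awkward at the origin. Second, for the invertibility of the Jacobian of $U_t$ at $(\lambda,0)$, the paper simply cites the proof of Lemma 6.3 in \cite{DHK}, whereas you reduce it yourself to the single scalar $\partial\varepsilon(t)/\partial\varepsilon_0|_{\varepsilon_0=0}>0$ via the block-triangular structure coming from the overall $\varepsilon$-factor in (\ref{multSpde}). Your block-structure observation is correct and arguably more transparent; but you have not actually verified the scalar positivity from \cite[Proposition 5.9]{DHK}, and merely asserting it is ``computed exactly as in the additive case'' is optimistic, since the explicit lifetime/endpoint formulas in the multiplicative case are considerably more involved than $\varepsilon(t)=\varepsilon_0(1-tp_{\varepsilon,0})^2$. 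Citing Lemma 6.3 of \cite{DHK}, as the paper does, is the safer move. Third, you correctly flag as a potential obstacle the analyticity of the solution of the full $(\lambda,\varepsilon,p_\lambda,p_\varepsilon)$ system for $\varepsilon_0<0$, but you stop at flagging it. The paper's footnote notes precisely this subtlety---the formula for the lifetime in \cite[Proposition 5.9]{DHK} is only valid for $\varepsilon_0\geq 0$, and for $\varepsilon_0<0$ the full system may blow up earlier than the formula for $p_\varepsilon(t)$ suggests---and resolves it by appealing to the general openness of the domain of a flow (\cite[Theorem 9.12]{Lee}), together with the strict inequality $T(\lambda)>t$ from Lemma \ref{multTgreater.lem}. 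That is the one step you would need to fill in to make the argument complete; the rest is sound.
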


\begin{proof}
Since both $b_{t}$ \cite[p. 265]{BianeJFA} and $u$ are invertible, 0 is not in
the spectrum of $ub_{t}.$

Assume than that $\lambda$ is a nonzero point outside $\overline{\Sigma}_{t}.$
By Lemma \ref{multSpecContain.lem}, $\lambda$ is outside the spectrum of $u.$
In that case, the initial momentum $p_{\varepsilon,0}$ at the point $\lambda$
with $\varepsilon_{0}=0$ (as in (\ref{multPepsilon})) is well defined and
finite, and similarly for the initial momentum $p_{\lambda}$ in
(\ref{plambdaInitial}). Indeed, these initial conditions remain well defined
even when $\varepsilon_{0}$ is slightly negative.

If we evaluate at $\varepsilon_{0}=0,$ the lifetime of the solution of the
Hamiltonian system is $T(\lambda),$ which is greater than $t$ by Lemma
\ref{multTgreater.lem}. Then a general result about flows (e.g., the fact that
the sets $M_{t}$ in \cite[Theorem 9.12]{Lee} are open) tells us that for
$(\lambda_{0},\varepsilon_{0})$ in a neighborhood of $(\lambda,0),$ the
lifetime of the solution with initial conditions $(\lambda_{0},\varepsilon
_{0})$ will remain greater than $t.$\footnote{This point is more subtle than
it may appear because the formula for the lifetime of the Hamiltonian system
in \cite[Proposition 5.9]{DHK} is only valid for $\varepsilon_{0}\geq0.$ The
difficulty is that when $\varepsilon_{0}<0,$ the blowup time of the whole
system may be smaller than the blowup time in the formula for $p_{\varepsilon
}(t)$. See Remark 5.10 in \cite{DHK}.}

We may then consider a map $U_{t}$ given by
\[
U_{t}(\lambda_{0},\varepsilon_{0})=(\lambda(t),\varepsilon(t)),
\]
where the characteristic curves $\lambda(\cdot)$ and $\varepsilon(\cdot)$ are
computed using the initial conditions $\lambda_{0}$ and $\varepsilon_{0}$ and
where the map is defined and analytic in a neighborhood of $(\lambda,0).$ Then
by the proof of Lemma 6.3 in \cite{DHK}, the Jacobian of $U_{t}$ at
$(\lambda_{0},0)$ is invertible. Thus, $U_{t}$ has a real-analytic inverse
defined near $(\lambda_{0},0).$

We then apply the second Hamilton--Jacobi formula
\[
\frac{\partial S}{\partial\varepsilon}(t,\lambda(t),\varepsilon
(t))=p_{\varepsilon}(t;\lambda_{0},\varepsilon_{0})
\]
from \cite[Equation (5.8)]{DHK}, where the notation means that $p_{\varepsilon
}$ is computed using the initial conditions $\lambda_{0}$ and $\varepsilon
_{0}.$ Then%
\[
p_{\varepsilon}(t;U_{t}^{-1}(\lambda,\varepsilon))
\]
will be analytic in $\varepsilon$ in a neighborhood of $\varepsilon=0$ and
will agree with $\frac{\partial S}{\partial\varepsilon}(t,\lambda
,\varepsilon)$ when $\varepsilon>0.$ Thus, Theorem \ref{main.thm} applies and
$\lambda$ is outside the spectrum of $ub_{t}.$
\end{proof}

We conclude the argument by supplying the proofs of Lemmas \ref{multUpper.lem}%
, \ref{multSpecContain.lem}, and \ref{multTgreater.lem}.

\begin{proof}
[Proof of Lemma \ref{multUpper.lem}]As we have noted, $T(\lambda)$ has a
removable singularity at $\left\vert \lambda\right\vert =1.$ Furthermore,
$p_{\varepsilon,0}(\lambda)$ can be infinity but cannot be zero. Thus, the
only way $T(\lambda)$ can be infinite is when $\lambda=0.$ Meanwhile,
according to Proposition 4.8 in \cite{HZ}, the function $T(\lambda)$ is the
decreasing limit as $\varepsilon_{0}\rightarrow0^{+}$ of a certain function
$t_{\ast}(\lambda,\varepsilon_{0})$ and this function is continuous in
$\lambda$ for each $\varepsilon_{0}.$ The claimed semicontinuity of $T$ then
follows by an elementary result \cite[Theorem 15.84]{Yeh} about semicontinuous functions.
\end{proof}

\begin{proof}
[Proof of Lemma \ref{multSpecContain.lem}]Since $u$ is unitary, Proposition
\ref{normalSupport.prop} tells us that the spectrum of $u$ equals the support
of the law $\mu_{u}$ of $u.$ Now, by Lemma 4.5 in \cite{Zhong}, the quantity
$p_{\varepsilon,0}(\lambda)$ in (\ref{multPepsilon}) is infinite for $\mu_{u}%
$-almost every $\lambda.$ Thus, $T(\lambda)=0$ for $\mu_{u}$-almost every,
showing that $\Sigma_{t}$ is a set of full measure for $\mu_{u}.$ Thus,
$\overline{\Sigma}_{t}$ is a closed set of full measure for $\mu_{u},$ showing
that $\sigma(u)=\mathrm{supp}(\mu_{u})$ is contained in $\overline{\Sigma}%
_{t}.$
\end{proof}

\begin{proof}
[Proof of Lemma \ref{multTgreater.lem}]The claimed result is stated in Theorem
4.10 of \cite{HZ}. There is, however, a small gap in the proof, concerning the
case $\left\vert \lambda\right\vert =1,$ which we fill in here. Since
$T(0)=\infty,$ we only consider nonzero $\lambda.$ Now, a point $\lambda$ with
$T(\lambda)=t>0$ will be outside $\overline{\Sigma}_{t}$ if and only if $T$
has a weak local minimum at $\lambda$ (meaning that all $\lambda^{\prime}$ in
a neighborhood of $\lambda$ have $T(\lambda^{\prime})\geq t,$ so that such
points are outside $\Sigma_{t}$). According to \cite[Lemma 4.15]{HZ}, the
function $T(re^{i\theta}),$ with $\theta$ fixed, is strictly increasing for
$1<r<\infty,$ and strictly decreasing for $0<r<1.$ Thus, any possible weak
local minimum of $T$ would have to be at a point on the unit circle.

Consider, then, a point $e^{i\theta}$ on the unit circle that is outside
$\overline{\Sigma}_{t}$ and thus (Lemma \ref{multSpecContain.lem}) outside
$\sigma(u)=\mathrm{supp}(\mu_{u}).$ Putting $\left\vert \lambda\right\vert =1$
in (\ref{multTdef}), we may follow the proof of Proposition 3.5 in \cite{ZhongUnitary} and compute that
\begin{align}
\frac{d^{2}}{d\theta^{2}}\frac{1}{T(e^{i\theta})}  &  =\frac{d^{2}}%
{d\theta^{2}}\int_{S^{1}}\frac{1}{2(1-\cos(\theta-\phi))}~d\mu_{u}(e^{i\theta
})\nonumber\\
&  =\frac{1}{2}\int_{S^{1}}\frac{(2+\cos(\theta-\phi))}{(1-\cos(\theta
-\phi))^{2}}~d\mu_{u}(e^{i\theta})\nonumber\\
&  >0. \label{1overT}%
\end{align}
Thus, $1/T$ cannot have a weak local maximum at $e^{i\theta}$ and $T$ cannot
have a weak local minimum at $e^{i\theta}.$ (Note, however, that $T$
\textit{can} have a weak local minimum on the unit circle, namely when it is
zero---in which case, (\ref{1overT}) becomes meaningless---but this cannot
happen at points in the unit circle outside $\overline{\Sigma}_{t}.$)
\end{proof}

\subsection{The case of $ub_{t,\gamma}$\label{ubtGamma.sec}}

We consider the general free multiplicative Brownian motion $b_{t,\gamma}$ as
defined in Section \ref{mult1.sec}. We then consider the regularized log
potential of $ub_{t,\gamma}$, as in (\ref{sDef}),%
\[
S(t,\gamma,\lambda,\varepsilon)=\mathrm{tr}[\log((ub_{t,\gamma}-\lambda
)^{\ast}(ub_{t,\gamma}-\lambda)+\varepsilon)].
\]
We use results of Hall--Ho \cite{HallHo2}, where $s$ in \cite{HallHo2}
corresponds to $t$ here and where $\tau$ in \cite{HallHo2} corresponds to
$t-\gamma$ here. According to Theorem 4.2 of \cite{HallHo2}, the function $S$
satisfies the PDE%
\begin{equation}
\frac{\partial S}{\partial\gamma}=-\frac{1}{8}\left(  1-\left(  1-\frac{1}%
{2}\varepsilon\frac{\partial S}{\partial\varepsilon}-2\lambda\frac{\partial
S}{\partial\lambda}\right)  ^{2}\right)  , \label{multSPDE1}%
\end{equation}
where we have adjusted the PDE\ to the \textquotedblleft$\varepsilon
$\textquotedblright\ regularization used here, rather than the
\textquotedblleft$\varepsilon^{2}$\textquotedblright\ regularization used in
\cite{HallHo2}. Note that unlike the PDE (\ref{Spde1}) in the additive case,
derivatives with respect to $\varepsilon$ appear on the right-hand side of
(\ref{multSPDE1}).

We now introduce the multiplicative version of the push-forward map
$\Phi_{t,\gamma}$ in Section \ref{arbPlusElliptic.sec}. In the multiplicative
setting, it is convenient to use the Herglotz function in place of the Cauchy
transform. If $a$ is an element of a tracial von Neumann algebra, we define
\[
J_{a}(\lambda)=\frac{1}{2}\int_{\mathbb{C}}\frac{\xi+\lambda}{\xi-\lambda
}~d\mathrm{Br}_{a}(\xi),
\]
whenever the integral converges. (The factor of $\frac{1}{2}$ is a convenient
normalization that makes the formulas in the multiplicative case more similar
to the ones in the additive case.) Note that $J_{a}$ is related to the Cauchy
transform $G_{a}$ as%
\begin{equation}
J_{a}(\lambda)=\frac{1}{2}\int_{\mathbb{C}}\frac{\xi-\lambda+2\lambda}%
{\xi-\lambda}~d\mathrm{Br}_{a}(\xi)=\frac{1}{2}-\lambda G_{a}(\lambda).
\label{HergDef}%
\end{equation}

We then define a map $\Psi_{t,\gamma},$ analogous to the map $\Phi_{t,\gamma}$
in the additive case, by
\begin{equation}
\Psi_{t,\gamma}(\lambda)=\lambda\exp\left\{  \gamma J_{ub_{t}}(\lambda
)\right\}  , \label{PsiDef}%
\end{equation}
where $b_{t}$ is the value of $b_{t,\gamma}$ at $\gamma=0.$ By the $\tau=s$
case of \cite{HallHo2}, this map agrees with the one denoted $\Phi_{s,\tau}$
in \cite[Section 8]{HallHo2}. The following result shows that the
\textquotedblleft model deformation phenomenon\textquotedblright\ holds in
this setting. That is to say, as we vary $\gamma$ with with $u$ and $t$ fixed,
the Brown measure of $ub_{t,\gamma}$ changes in a very specific way, namely by
push-forward under the map $\Psi_{t,\gamma}.$

\begin{theorem}
[Ho--Zhong, Hall--Ho]For all $t>0$ and $\gamma\in\mathbb{C}$ with $\left\vert
\gamma\right\vert \leq t,$ the Brown measure of $ub_{t,\gamma}$ is the
push-forward of $\mathrm{Br}_{ub_{t}}$ under the map $\Psi_{t,\gamma}.$
\end{theorem}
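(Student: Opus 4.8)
The plan is to recover this statement from the Hamilton--Jacobi analysis of the PDE (\ref{multSPDE1}) carried out by Hall and Ho in \cite{HallHo2} (under the dictionary $s\leftrightarrow t$, $\tau\leftrightarrow t-\gamma$, and with their ``$\varepsilon^{2}$'' regularization replaced by the ``$\varepsilon$'' regularization), together with the Ho--Zhong computation of $\mathrm{Br}_{ub_{t}}$ in \cite[Section 4]{HZ} at the base point $\gamma=0$. Multiplying $u$ by a unimodular scalar we may assume $\gamma=u\in\mathbb{R}$ and pass to real form in (\ref{multSPDE1}); since no $t$- or $\varepsilon$-derivatives of $S$ occur, this is a first-order Hamilton--Jacobi equation in the ``time'' $u$, with spatial variable $\lambda=x+iy$ and with $\varepsilon$ an auxiliary variable dragged along the characteristics. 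Writing $p_{\varepsilon}=\partial S/\partial\varepsilon$ and $p_{\lambda}=\partial S/\partial\lambda$, the associated Hamiltonian depends on the phase-space variables only through the single combination $A=1-\tfrac{1}{2}\varepsilon p_{\varepsilon}-2\lambda p_{\lambda}$.

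The first step is to exploit that $A$ (and its conjugate) Poisson-commute with the Hamiltonian, so that $A$ is a constant of motion; in fact $\varepsilon p_{\varepsilon}$ and $\lambda p_{\lambda}$ are separately conserved, so the $\lambda$-characteristic integrates to $\lambda(u)=\lambda_{0}\,e^{uA/2}$, while $\varepsilon(u)$ is $\varepsilon_{0}$ times a bounded exponential factor. Now start a characteristic at a point $\lambda$ lying \emph{outside} $\overline{\Sigma}_{t}$, where $ub_{t}-\lambda$ is invertible by Theorem \ref{ubtSpec.thm}, and let the initial regularization $\varepsilon_{0}\rightarrow0^{+}$: the initial momenta converge, with $\varepsilon_{0}p_{\varepsilon}(0)\rightarrow0$ and, by (\ref{plambdaInitial}), $\lambda p_{\lambda}(0)\rightarrow\lambda G_{ub_{t}}(\lambda)$; hence $A\rightarrow1-2\lambda G_{ub_{t}}(\lambda)=2J_{ub_{t}}(\lambda)$ by (\ref{HergDef}). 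Therefore the characteristic carries $\lambda$ exactly to $\lambda\,e^{\gamma J_{ub_{t}}(\lambda)}=\Psi_{t,\gamma}(\lambda)$, and $\varepsilon(u)\rightarrow0$.

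Feeding this back into the Hamilton--Jacobi formulas yields, at the regularized level, an exact relation between the regularized log potentials of $ub_{t,\gamma}$ and $ub_{t}$ transported by the characteristic map $(\lambda_{0},\varepsilon_{0})\mapsto(\lambda(\gamma),\varepsilon(\gamma))$ (the multiplicative counterpart of the identity used in the proof of Proposition \ref{specElliptic.prop}); applying the distributional Laplacian and then letting $\varepsilon\rightarrow0$ gives $\mathrm{Br}_{ub_{t,\gamma}}=(\Psi_{t,\gamma})_{\ast}\mathrm{Br}_{ub_{t}}$, since $\Psi_{t,\gamma}^{(\varepsilon)}\rightarrow\Psi_{t,\gamma}$ locally uniformly and push-forward is weak-$\ast$ continuous in that situation. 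Concretely this splits into two parts: (i) the above relation makes $s_{ub_{t,\gamma}}\circ\Psi_{t,\gamma}$ harmonic off $\overline{\Sigma}_{t}$, where $\Psi_{t,\gamma}$ is holomorphic and --- by the multiplicative analogues of Lemmas \ref{InjPhi.lem} and \ref{PhiSurj.lem} --- an injective open surjection of $\mathbb{C}$, so that $\mathrm{Br}_{ub_{t,\gamma}}$ is supported in $\Psi_{t,\gamma}(\overline{\Sigma}_{t})$; combined with $\mathrm{supp}(\mathrm{Br}_{ub_{t}})=\overline{\Sigma}_{t}$ and Lemma \ref{pushSupport.lem}, this matches the two supports; (ii) on the interior of $\Psi_{t,\gamma}(\overline{\Sigma}_{t})$ the densities agree by the explicit change-of-variables computations of \cite[Section 4]{HZ} and \cite{HallHo2}.

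The main obstacle is the $\varepsilon_{0}\rightarrow0^{+}$ limit together with the local-diffeomorphism claim it requires, which is exactly the subtlety flagged in the footnote in Section \ref{ubt.sec}: the closed-form lifetime of the characteristic system in \cite[Proposition 5.9]{DHK} is valid only for $\varepsilon_{0}\geq0$, and for slightly negative $\varepsilon_{0}$ the genuine blow-up time can be shorter. One must check that, for $\lambda$ outside $\overline{\Sigma}_{t}$, the characteristic flow survives to time $\gamma$ on a full neighborhood of $(\lambda,0)$ in $(\lambda_{0},\varepsilon_{0})$-space, and that the characteristic map there is a local diffeomorphism --- its Jacobian being block-triangular with diagonal blocks the (nonvanishing, by injectivity of $\Psi_{t,\gamma}$) holomorphic derivative $\Psi_{t,\gamma}'$ and the identity. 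These verifications, along with the interior density computation, are carried out in \cite[Section 4]{HZ} and \cite{HallHo2}, so in the paper we may simply record the statement and invoke those references via the parameter dictionary.
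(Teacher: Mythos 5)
Your proposal ultimately lands in the same place as the paper, because the paper does not give an independent proof of this theorem at all: the line immediately following the statement simply attributes it to \cite[Corollary 4.30]{HZ} and \cite[Theorem 8.2]{HallHo2} via the dictionary $s\leftrightarrow t$, $\tau\leftrightarrow t-\gamma$, which is precisely what your closing sentence proposes. The Hamilton--Jacobi sketch you give on the way is an accurate account of the mechanism behind those references: for the real form of (\ref{multSPDE1}) (with $H$ obtained from $2\operatorname{Re}[\partial S/\partial\gamma]$, not $\operatorname{Re}[\partial S/\partial\gamma]$), the quantities $\varepsilon p_{\varepsilon}$ and $\lambda p_{\lambda}$ are indeed separately conserved, so $A$ is a constant of motion, $\lambda(\gamma)=\lambda_{0}e^{\gamma A/2}$, and as $\varepsilon_{0}\rightarrow0^{+}$ with $\lambda_{0}$ outside $\overline{\Sigma}_{t}$ one has $\varepsilon_{0}p_{\varepsilon}(0)\rightarrow0$ and $A\rightarrow 2J_{ub_{t}}(\lambda_{0})$, recovering $\Psi_{t,\gamma}$. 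This is exactly the reduction the paper itself later uses inside the proof of Proposition \ref{multSpecPsi.prop}, so your sketch and the paper's machinery coincide.

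One part of the proposal is misdirected. What you flag as ``the main obstacle'' --- the breakdown for slightly negative $\varepsilon_{0}$ of the closed-form lifetime from \cite[Proposition 5.9]{DHK}, as noted in the footnote in Section \ref{ubt.sec} --- has no bearing on the push-forward theorem. That subtlety matters only for the \emph{spectral} claims (Theorem \ref{ubtSpec.thm} and Proposition \ref{multSpecPsi.prop}), where one must continue the characteristic map to $\varepsilon_{0}<0$ in order to manufacture a real-analytic extension of $\partial S/\partial\varepsilon$ past $\varepsilon=0$. The identity $\mathrm{Br}_{ub_{t,\gamma}}=(\Psi_{t,\gamma})_{\ast}\mathrm{Br}_{ub_{t}}$ is derived entirely from $\varepsilon_{0}>0$ and the one-sided limit $\varepsilon_{0}\rightarrow0^{+}$, together with the interior ($\lambda_{0}\in\Sigma_{t}$) density analysis that you correctly defer to \cite{HZ} and \cite{HallHo2}. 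So the proposal slightly overstates which parts of the Hamilton--Jacobi machinery this particular theorem rests on, but the overall plan and its final disposition (record and cite) match the paper.
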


This result is due to Ho--Zhong \cite[Corollary 4.30]{HZ} in the case
$\gamma=0$ and to Hall--Ho \cite[Theorem 8.2]{HallHo2} for $\gamma\neq0.$

\begin{proposition}
\label{multSpecPsi.prop}Fix $t>0$ and $\gamma\in\mathbb{C}$ with $\left\vert
\gamma\right\vert \leq t$ and let $\Psi_{t,\gamma}$ be as in (\ref{PsiDef}).
Then for all $\lambda$ outside of $\overline{\Sigma}_{t},$ the point
$\Psi_{t,\gamma}(\lambda)$ is outside the spectrum of $ub_{t,\gamma}.$
\end{proposition}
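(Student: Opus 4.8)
The plan is to follow the proof of Proposition~\ref{specElliptic.prop}, with the $\gamma$-PDE~(\ref{multSPDE1}) for the regularized log potential $S(t,\gamma,\lambda,\varepsilon)$ of $ub_{t,\gamma}$ replacing the additive PDE~(\ref{Spde1}), and with Theorem~\ref{ubtSpec.thm} (the case $\gamma=0$) replacing Corollary~\ref{specIn.cor}. Fix $t>0$, a parameter $\gamma$ with $|\gamma|\le t$, and a point $\lambda$ outside $\overline{\Sigma}_{t}$; we may assume $\lambda\neq 0$, since $ub_{t,\gamma}$ is invertible and so $0\notin\sigma(ub_{t,\gamma})$. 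By Theorem~\ref{ubtSpec.thm}, $\lambda$ lies outside $\sigma(ub_{t})=\overline{\Sigma}_{t}$, so $|ub_{t}-\lambda|^{2}$ is invertible and the initial data
\[
p_{\varepsilon,0}=\frac{\partial S}{\partial\varepsilon}(t,0,\lambda,\varepsilon_{0}),\qquad p_{\lambda,0}=\frac{\partial S}{\partial\lambda}(t,0,\lambda,\varepsilon_{0})
\]
extend real-analytically to $\varepsilon_{0}$ in a two-sided neighborhood of $0$, not merely to $\varepsilon_{0}>0$. Running the Hamilton--Jacobi flow of~(\ref{multSPDE1}) from these data, as in \cite{HallHo2}, produces characteristic curves $\lambda(\gamma),\varepsilon(\gamma),p_{\lambda}(\gamma),p_{\varepsilon}(\gamma)$. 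The new feature compared with the additive case is that~(\ref{multSPDE1}) contains $\varepsilon$-derivatives, so the flow moves $\varepsilon$ as well as $\lambda$; however a short computation with Hamilton's equations shows that $\varepsilon\,p_{\varepsilon}$ and $\lambda\,p_{\lambda}$ are constants of motion. Hence $\varepsilon(\gamma)$ equals $\varepsilon_{0}$ times a nowhere-vanishing analytic factor, $p_{\varepsilon}(\gamma)=\varepsilon_{0}p_{\varepsilon,0}/\varepsilon(\gamma)$, and $\lambda(\gamma)\to\Psi_{t,\gamma}(\lambda)$ as $\varepsilon_{0}\to 0^{+}$, while the second Hamilton--Jacobi formula gives
\[
\frac{\partial S}{\partial\varepsilon}\bigl(t,\gamma,\lambda(\gamma),\varepsilon(\gamma)\bigr)=p_{\varepsilon}(\gamma)=\frac{\varepsilon_{0}}{\varepsilon(\gamma)}\,\frac{\partial S}{\partial\varepsilon}(t,0,\lambda,\varepsilon_{0}),
\]
which is the multiplicative analogue of~(\ref{conservePepsilon}); it can equivalently be extracted from the regularized push-forward identities of \cite{HallHo2}.

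Next I would set $F(\lambda_{0},\varepsilon_{0})=(\lambda(\gamma),\varepsilon(\gamma))$, which by the previous paragraph is real-analytic for $\lambda_{0}$ near $\lambda$ and $\varepsilon_{0}$ in a two-sided neighborhood of $0$. Since $\varepsilon(\gamma)$ vanishes to first order in $\varepsilon_{0}$, the derivative of $F$ at $(\lambda,0)$ is block triangular,
\[
F_{\ast}(\lambda,0)=\begin{pmatrix}(\Psi_{t,\gamma})_{\ast}&\ast\\0&\partial\varepsilon(\gamma)/\partial\varepsilon_{0}\end{pmatrix},
\]
with the lower-right entry nonzero by the conservation of $\varepsilon\,p_{\varepsilon}$, and with $(\Psi_{t,\gamma})_{\ast}$ the holomorphic derivative of $\lambda_{0}\mapsto\Psi_{t,\gamma}(\lambda_{0})$; this map is holomorphic on $(\overline{\Sigma}_{t})^{c}$ because $\mathrm{Br}_{ub_{t}}$ is supported in $\overline{\Sigma}_{t}$ by \cite[Theorem~4.28]{HZ}. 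To see that $(\Psi_{t,\gamma})_{\ast}$ is invertible I would argue as for Lemma~\ref{InjPhi.lem}: the multiplicative analogue of~(\ref{GequalsG}), available from \cite{HZ}, gives $\Psi_{t,\gamma}(\lambda)=\lambda\exp\{\gamma J_{u}(\lambda)\}$ on $(\overline{\Sigma}_{t})^{c}$ with $J_{u}$ the Herglotz transform of $\mu_{u}$, so that $\Psi_{t,\gamma}'(\lambda)=e^{\gamma J_{u}(\lambda)}\bigl(1+\gamma\lambda J_{u}'(\lambda)\bigr)$ and $\lambda J_{u}'(\lambda)=\int_{S^{1}}\xi\lambda(\xi-\lambda)^{-2}\,d\mu_{u}(\xi)$. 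Since $|\xi|=1$,
\[
|\gamma\lambda J_{u}'(\lambda)|\le|\gamma|\,|\lambda|\,\mathrm{tr}[|\lambda-u|^{-2}]<|\lambda|\,\frac{\log(|\lambda|^{2})}{|\lambda|^{2}-1}=\frac{\log|\lambda|}{\sinh(\log|\lambda|)}\le 1,
\]
where the middle inequality uses~(\ref{multTdef}), $T(\lambda)>t$ (Lemma~\ref{multTgreater.lem}), and $|\gamma|\le t$, and the last step is the elementary bound $u/\sinh u\le 1$. Thus $1+\gamma\lambda J_{u}'(\lambda)\neq 0$ and $F_{\ast}(\lambda,0)$ is invertible, so the inverse function theorem supplies a real-analytic local inverse $F^{-1}(z,\varepsilon)=(\Lambda(z,\varepsilon),E(z,\varepsilon))$ near $(\Psi_{t,\gamma}(\lambda),0)$, with $E(z,0)=0$ and $\partial E/\partial\varepsilon\neq 0$ at $\varepsilon=0$.

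Composing the displayed Hamilton--Jacobi identity with $F^{-1}$ then yields, for $\varepsilon>0$,
\[
\frac{\partial S}{\partial\varepsilon}(t,\gamma,z,\varepsilon)=\frac{E(z,\varepsilon)}{\varepsilon}\,\frac{\partial S}{\partial\varepsilon}\bigl(t,0,\Lambda(z,\varepsilon),E(z,\varepsilon)\bigr),
\]
and the right-hand side extends real-analytically across $\varepsilon=0$: the factor $E(z,\varepsilon)/\varepsilon$ is analytic and nonzero there, while $\mathrm{tr}[(|ub_{t}-\lambda'|^{2}+\varepsilon)^{-1}]$ is jointly real-analytic in $(\lambda',\varepsilon)$ near $(\lambda,0)$ by the Neumann series, since $\lambda\notin\sigma(ub_{t})$. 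Setting $z=\Psi_{t,\gamma}(\lambda)$, the map $\varepsilon\mapsto\partial S/\partial\varepsilon(t,\gamma,\Psi_{t,\gamma}(\lambda),\varepsilon)$ is therefore analytic at $\varepsilon=0$, so Theorem~\ref{main.thm} gives $\Psi_{t,\gamma}(\lambda)\notin\sigma(ub_{t,\gamma})$, as claimed. The main work, I expect, lies in the first two paragraphs: transporting the Hamilton--Jacobi apparatus of \cite{HallHo2}---the conservation laws, the second Hamilton--Jacobi formula for $\partial S/\partial\varepsilon$, and the invertibility of the characteristic flow off $\overline{\Sigma}_{t}$---into the ``$\varepsilon$'' regularization used in~(\ref{multSPDE1}) rather than the ``$\varepsilon^{2}$'' regularization of \cite{HallHo2} (cf.\ Remark~\ref{epsilonSquared.rem}), and verifying that the characteristic flow and the Hamilton--Jacobi formula remain valid for $\varepsilon_{0}$ slightly negative---exactly the kind of subtlety flagged in the footnote to the proof of Theorem~\ref{ubtSpec.thm}.
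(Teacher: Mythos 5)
Your proof is correct, and its overall architecture---Hamilton--Jacobi flow of~(\ref{multSPDE1}) launched from the $\gamma=0$ initial condition, the map $F(\lambda_0,\varepsilon_0)=(\lambda(\gamma),\varepsilon(\gamma))$, the block-triangular Jacobian, the inverse function theorem, and finally Theorem~\ref{main.thm}---matches the paper's. Where you diverge is in two sublemmas, and both of your variants are more self-contained than what appears in the text. First, the paper treats the characteristic flow by citing the explicit formulas in \cite[Eqs.\ (5.4)--(5.9)]{HallHo2}, whereas you observe that the Hamiltonian built from~(\ref{multSPDE1}) depends on $(\varepsilon,p_\varepsilon)$ and $(\lambda,p_\lambda)$ only through the products $\varepsilon p_\varepsilon$ and $\lambda p_\lambda$, so those are conserved; this gives the second Hamilton--Jacobi identity, the factorization $\varepsilon(\gamma)=\varepsilon_0\exp(\cdots)$, and the nonvanishing of $\partial\varepsilon(\gamma)/\partial\varepsilon_0$ at $\varepsilon_0=0$ with a one-line computation rather than a lookup. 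Second, the paper deduces $\Psi'_{t,\gamma}(\lambda)\neq0$ from the \emph{injectivity} of $\Psi_{t,\gamma}$ on $(\overline{\Sigma}_t)^c$ (Lemma~\ref{injPsi.lem}, which imports \cite[Theorem 3.8]{HallHo2}); you instead prove nonvanishing directly by computing $\Psi'_{t,\gamma}=e^{\gamma J_u}(1+\gamma\lambda J_u')$, bounding $|\gamma\lambda J_u'(\lambda)|\le|\gamma|\,|\lambda|\,\mathrm{tr}[|u-\lambda|^{-2}]$, and then combining $|\gamma|\le t<T(\lambda)$ (Lemma~\ref{multTgreater.lem}) with the identity $|\lambda|\log(|\lambda|^2)/(|\lambda|^2-1)=\log|\lambda|/\sinh(\log|\lambda|)\le1$. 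That last estimate is a genuinely different key lemma; it is elementary, avoids the global injectivity machinery, and is the right statement to have on hand if one wants only the inverse function theorem locally. The places that still lean on the literature are the same ones the paper leans on: the formula $\Psi_{t,\gamma}=\lambda e^{\gamma J_u}$ on $(\overline{\Sigma}_t)^c$ (your ``multiplicative analogue of~(\ref{GequalsG})''), and the validity of the Hamilton--Jacobi formulas for $\varepsilon_0$ slightly negative---which you correctly flag, and which is handled exactly as in the $\gamma=0$ case.
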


We now give the multiplicative version of Lemma \ref{InjPhi.lem}.

\begin{lemma}
\label{injPsi.lem} For all $t>0$ and $\gamma\in\mathbb{C}$ with $\left\vert
\gamma\right\vert \leq t,$ the map $\Psi_{t,\gamma}$ is injective on the
complement of $\overline{\Sigma}_{t}$ and is given on this set by%
\begin{equation}
\Psi_{t,\gamma}(\lambda)=\lambda\exp\left\{  \gamma J_{u}(\lambda)\right\}
,\quad\lambda\in(\overline{\Sigma}_{t})^{c}. \label{Psiu}%
\end{equation}
Thus, $\Psi_{t,\gamma}$ coincides on $(\overline{\Sigma}_{t})^{c}$ with the
holomorphic function denoted $f_{\gamma}$ in \cite[Definition 2.2]{HallHo2}.
Furthermore, $\Psi_{t,\gamma}$ is defined and continuous on $\mathbb{C}.$
\end{lemma}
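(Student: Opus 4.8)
My plan is to mirror, in the multiplicative setting, the proof of Lemma \ref{InjPhi.lem}, with the Herglotz function $J$ playing the role of the Cauchy transform $G$ and with the linear estimates of the additive case replaced by estimates carried out in the logarithmic variable $\lambda=e^{z}$. For the formula (\ref{Psiu}), note first that by (\ref{HergDef}) it suffices to prove $G_{ub_t}(\lambda)=G_u(\lambda)$ for $\lambda\in(\overline{\Sigma}_t)^c$, which is the exact multiplicative counterpart of (\ref{GequalsG}). I would obtain this from the Hamilton--Jacobi analysis of the PDE (\ref{multSpde}) already used in the proof of Theorem \ref{ubtSpec.thm}: when $\varepsilon_0=0$ one checks from Hamilton's equations that $d\lambda/dt=0$ and $\varepsilon(t)\equiv0$, so the characteristic issued from $(\lambda,0)$ is stationary and the Hamiltonian vanishes along it; since its lifetime $T(\lambda)$ exceeds $t$ outside $\overline{\Sigma}_t$ by Lemma \ref{multTgreater.lem}, letting $\varepsilon_0\to0^{+}$ in the Hamilton--Jacobi formula gives $s_{ub_t}(\lambda)=s_u(\lambda)$ there. (This is essentially the argument by which \cite{HZ} shows $\mathrm{Br}_{ub_t}$ vanishes outside $\overline{\Sigma}_t$.) Differentiating in $\lambda$ yields $G_{ub_t}=G_u$ on $(\overline{\Sigma}_t)^c$, hence (\ref{Psiu}); since $\sigma(u)\subset\overline{\Sigma}_t$ (Lemma \ref{multSpecContain.lem}), the function $\lambda\mapsto\lambda\exp\{\gamma J_u(\lambda)\}$ is holomorphic on $(\overline{\Sigma}_t)^c$, and comparing with \cite[Definition 2.2]{HallHo2} identifies it with $f_\gamma$. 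Continuity of $\Psi_{t,\gamma}$ on all of $\mathbb{C}$ (the delicate part being inside $\overline{\Sigma}_t$, where the kernel of $J_{ub_t}$ is singular on $\mathrm{supp}(\mathrm{Br}_{ub_t})$) is the analogue of the property of $\Phi_{t,\gamma}$ invoked in Lemma \ref{PhiSurj.lem}, and is supplied by \cite{HallHo2} (compare \cite[Lemma 5.11]{Zhong}).

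For the injectivity on $(\overline{\Sigma}_t)^c$, I would use the identity $\tfrac{\xi+\lambda}{\xi-\lambda}=\coth\!\big(\tfrac{\log\xi-z}{2}\big)$ valid for $|\xi|=1$, so that, writing $\Psi_{t,\gamma}(e^{z})=e^{\tilde\Psi(z)}$ with $\tilde\Psi(z)=z+\gamma J_u(e^{z})$, the map $\Psi_{t,\gamma}$ becomes a perturbation of the identity. Its derivative is $\tilde\Psi'(z)=1+\gamma\lambda J_u'(\lambda)$ with $J_u'(\lambda)=\int_{S^1}\xi(\xi-\lambda)^{-2}\,d\mu_u(\xi)$, and the key estimate is
\[
|\gamma\lambda J_u'(\lambda)|\leq|\gamma|\,|\lambda|\int_{S^1}\frac{d\mu_u(\xi)}{|\xi-\lambda|^{2}}=|\gamma|\,|\lambda|\,\tilde{p}_{\varepsilon,0}(\lambda)=\frac{|\gamma|}{T(\lambda)}\cdot\frac{\log|\lambda|}{\sinh(\log|\lambda|)}\leq\frac{|\gamma|}{T(\lambda)},
\]
where the middle equality comes from (\ref{multTdef}) and the last step uses the elementary inequality $u/\sinh u\leq1$. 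Since $T(\lambda)>t\geq|\gamma|$ for $\lambda\notin\overline{\Sigma}_t$ (Lemma \ref{multTgreater.lem}), we get $|\gamma\lambda J_u'(\lambda)|<1$ there; in particular $\tilde\Psi'$ never vanishes, i.e.\ the holomorphic derivative of $\Psi_{t,\gamma}$ is nowhere zero on $(\overline{\Sigma}_t)^c$ — which is all that the downstream Proposition \ref{multSpecPsi.prop} actually uses. Global injectivity then follows from the standard fact that ``identity plus a holomorphic map of derivative strictly less than $1$ in modulus'' is injective: if $\Psi_{t,\gamma}(\lambda_1)=\Psi_{t,\gamma}(\lambda_2)$, passing to logarithms along a path in $(\overline{\Sigma}_t)^c$ and integrating $\gamma\lambda J_u'(\lambda)$ along it forces $\lambda_1=\lambda_2$. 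This is the multiplicative form of Zhong's argument from Lemma \ref{InjPhi.lem} and, like the injectivity of $f_\gamma$ in \cite{HallHo2}, it extends the proof of \cite[Lemmas 3 and 4]{BianeFreeHeat}.

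The routine parts are the first step (given the Hamilton--Jacobi picture of Theorem \ref{ubtSpec.thm}) and the local statement in the second. The main obstacle, with no counterpart in the additive case, is the passage between $\Psi_{t,\gamma}$ and $\tilde\Psi$: because $(\overline{\Sigma}_t)^c$ contains $0$ and need not be simply connected — it may even be disconnected, for instance when $\mathrm{supp}(\mu_u)=S^{1}$ — one must track the branch of the logarithm and eliminate a spurious winding term $2\pi i m$ in the relation $\log\lambda_1-\log\lambda_2=-\gamma(J_u(\lambda_1)-J_u(\lambda_2))+2\pi i m$. I would dispose of this component by component, using that $\Psi_{t,\gamma}(\lambda)$ is asymptotic to a nonzero constant multiple of $\lambda$ both as $\lambda\to0$ and as $\lambda\to\infty$; alternatively, one can simply quote the injectivity of $f_\gamma$ established in \cite{HallHo2} and combine it with the identification (\ref{Psiu}).
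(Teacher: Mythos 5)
The paper's own proof is essentially a sequence of citations to \cite{HallHo2}: \cite[Theorem 6.1]{HallHo2} for the formula (\ref{Psiu}), \cite[Theorem 3.8]{HallHo2} for the injectivity, and \cite[Equation (3.8) and Proposition 8.3]{HallHo2} for the continuity, together with a brief supplementary remark (via Lemma \ref{multTgreater.lem}) to cover the case $\gamma=t$, which is excluded by the hypotheses of \cite[Theorem 3.8]{HallHo2}. You instead attempt a more self-contained derivation. Your Hamilton--Jacobi argument for (\ref{Psiu}) --- that at $\varepsilon_0=0$ the characteristic curve is stationary and the Hamiltonian vanishes, so $s_{ub_t}=s_u$ outside $\overline{\Sigma}_t$ and hence $J_{ub_t}=J_u$ there --- is correct and is a genuine alternative to citing \cite[Theorem 6.1]{HallHo2}. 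Your estimate $|\gamma\lambda J_u'(\lambda)|\le|\gamma|/T(\lambda)<1$ on $(\overline{\Sigma}_t)^c$ is also correct and cleanly yields that $\Psi_{t,\gamma}'$ never vanishes there; as you rightly observe, this local statement is all that the downstream Proposition \ref{multSpecPsi.prop} actually uses.

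However, the passage from a nowhere-vanishing derivative to \emph{global} injectivity on $(\overline{\Sigma}_t)^c$ is a genuine gap. The standard ``identity plus a holomorphic map whose derivative has modulus strictly less than $1$ is injective'' argument needs the domain, in the $z=\log\lambda$ variable, to be convex (or at least such that any two points are joined by a path in the domain of Euclidean length at most $|z_1-z_2|$); the preimage of $(\overline{\Sigma}_t)^c$ under $z\mapsto e^z$ is a horizontally periodic region whose convexity is not established, and $(\overline{\Sigma}_t)^c$ itself may have two components. Zhong's direct Cauchy--Schwarz estimate from Lemma \ref{InjPhi.lem} also does not transplant: $\Psi_{t,\gamma}(\lambda_1)=\Psi_{t,\gamma}(\lambda_2)$ is a multiplicative constraint, and the resulting relation $\lambda_1/\lambda_2=e^{-\gamma(J_u(\lambda_1)-J_u(\lambda_2))}$ does not collapse to a strict self-contradictory inequality of the form (\ref{contradict}). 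You flag the $2\pi i m$ winding ambiguity, but the proposed ``component by component, using asymptotics'' resolution is too sketchy to close the argument. Your fallback --- simply quoting the injectivity of $f_\gamma$ from \cite[Theorem 3.8]{HallHo2} --- is what the paper does and is the sound route; if you take that as your actual argument, note that you must still address the $\gamma=t$ case (i.e.\ $\tau=0$ in \cite{HallHo2}), to which \cite[Theorem 3.8]{HallHo2} does not directly apply, by arguing as the paper does via Lemma \ref{multTgreater.lem}.
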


Note that (\ref{PsiDef}) involves $J_{ub_{t}}$ but (\ref{Psiu}) involves
$J_{u}.$

\begin{proof}
The formula (\ref{Psiu}) follows from the $\tau=s$ case of \cite[Theorem
6.1]{HallHo2}. Once (\ref{Psiu}) is established, we see that $\Psi_{t,\gamma}$
coincides on $(\overline{\Sigma}_{t})^{c}$ with the function denoted
$f_{s-\tau}$ in \cite{HallHo2}, where $s$ and $\tau$ in \cite{HallHo2}
correspond to $t$ and $t-\gamma$, respectively, here. Then the claimed
injectivity follows from Theorem 3.8 in \cite{HallHo2}. Note that this theorem
assumes $\tau\neq0,$ which means $\gamma\neq t$ in our current notation, but
in light of Lemma \ref{multTgreater.lem}, this assumption is only needed to
ensure injectivity of $f_{s-\tau}$ on the boundary of $\Sigma_{t}$;
injectivity on $(\overline{\Sigma}_{t})^{c}$ still holds when $\gamma=t.$

Continuity of $\Psi_{t,\gamma}=f_{\gamma}$ outside $\overline{\Sigma}_{t}$
follows from \cite[Equation (3.8)]{HallHo2}. This formula also allows
computation of the limiting value as we approach the boundary of $\Sigma_{t}.$
Meanwhile, continuity of $\Psi_{t,\gamma}$ on $\overline{\Sigma}_{t}$ follows
from the explicit formulas in \cite[Proposition 8.3]{HallHo2}, which agrees by
construction with the limiting value of $\Psi_{t,\gamma}=f_{\gamma}$ on the boundary.
\end{proof}

\begin{proof}
[Proof of Proposition \ref{multSpecPsi.prop}]The proof follows the proof of
Proposition \ref{specElliptic.prop} in the additive case. We use the
Hamilton--Jacobi analysis for the PDE\ (\ref{multSPDE1}) as developed in
\cite[Section 5]{HallHo2}, but keeping in mind that $\varepsilon$ in
\cite{HallHo2} corresponds to $\sqrt{\varepsilon}$ here. Now, \cite{HallHo2}
uses $\gamma=t$ (i.e., $\tau=0$ in the notation of \cite{HallHo2}) as the
initial condition. But since we have already established Theorem
\ref{ubtSpec.thm} (corresponding to the case $\gamma=0$), it is convenient to
use $\gamma=0$ as our initial condition.

We then have the second Hamilton--Jacobi formula from \cite[Theorem
5.1]{HallHo2} for the PDE (\ref{multSPDE1}), adapted to the notation used
here:%
\[
\frac{\partial S}{\partial\varepsilon}(t,\gamma,\lambda(\gamma),\varepsilon
(\gamma))=p_{\varepsilon}(\gamma),
\]
where the curves $\lambda(\gamma),$ $\varepsilon(\gamma),$ and $p_{\varepsilon
}(\gamma)$ are given explicitly in Eqs. (5.6)--(5.9) of \cite{HallHo2}. Now,
if we take $\gamma=0$ as our initial condition (and use the \textquotedblleft%
$\varepsilon$\textquotedblright\ regularization), then the formulas in
\cite[Equations (5.4) and (5.5)]{HallHo2} for the initial momenta become%
\begin{align}
p_{\lambda}(0)  &  =-\mathrm{tr}\left[  (ub_{t}-\lambda_{0})^{\ast}(\left\vert
ub_{t}-\lambda_{0}\right\vert ^{2}+\varepsilon_{0})\right] \label{plambda0}\\
p_{\varepsilon}(0)  &  =\mathrm{tr}\left[  (\left\vert ub_{t}-\lambda
_{0}\right\vert ^{2}+\varepsilon_{0})\right]  , \label{pepsilon0}%
\end{align}
where $\lambda_{0}$ and $\varepsilon_{0}$ are the initial values of
$\lambda(\gamma)$ and $\varepsilon(\gamma)$, respectively.

We then appeal to Theorem \ref{ubtSpec.thm}, which says that the spectrum of
$ub_{t}$ is contained in $\overline{\Sigma}_{t}.$ Then for $\lambda_{0}%
\in(\overline{\Sigma}_{t})^{c},$ the initial momenta in (\ref{plambda0}) and
(\ref{pepsilon0}) remain well defined and finite even if $\varepsilon_{0}$ is
slightly negative. Thus, Eqs. (5.6)--(5.9) of \cite{HallHo2} make sense and
depend analytically on $\lambda_{0}$ and $\varepsilon_{0},$ even for
$\varepsilon_{0}$ slightly negative.

Now, if we take $\gamma=0$ as our initial condition, the formula for
$\lambda(\gamma)$ at $\varepsilon_{0}=0$ in \cite[Proposition 5.6]{HallHo2}
becomes%
\[
\lambda(\gamma)=f_{\gamma}(\lambda_{0})=\Psi_{t,\gamma}(\lambda_{0}),
\]
where the second equality is from Lemma \ref{injPsi.lem}. Furthermore, at
$\varepsilon_{0}=0,$ we have $\varepsilon(\gamma)=0,$ by \cite[Proposition
5.6]{HallHo2}.

We then define a map $F$, for each fixed $t$ and $\gamma,$ by
\[
F(\lambda_{0},\varepsilon_{0})=(\lambda(\gamma),\varepsilon(\gamma)),
\]
where the curves $\lambda(\cdot)$ and $\varepsilon(\cdot)$ are computed using
the initial conditions $\lambda_{0}$ and $\varepsilon_{0}.$ The Jacobian of
this map at $\varepsilon_{0}=0$ has the form%
\[
F_{\ast}(\lambda_{0},0)=\left(
\begin{array}
[c]{cc}%
(\Psi_{t,\gamma})_{\ast} & \ast\\
0 & \left.  \frac{\partial\varepsilon(\gamma)}{\partial\varepsilon_{0}%
}\right\vert _{\varepsilon_{0}=0}%
\end{array}
\right)  ,
\]
where the quantity in the bottom right corner is easily seen to equal 1, using
the explicit formula for $\varepsilon(\gamma)$ in \cite[Equation
(5.7)]{HallHo2}. The rest of the argument proceeds as in the proof of
Proposition \ref{specElliptic.prop}, using Lemma \ref{injPsi.lem} in place of
Lemma \ref{InjPhi.lem}.
\end{proof}

\begin{theorem}
\label{multEquality.thm}For all $t>0$ and $\gamma\in\mathbb{C}$ with
$\left\vert \gamma\right\vert \leq t,$ the spectrum $\sigma(ub_{t,\gamma})$ of
$ub_{t,\gamma}$ is the image of $\overline{\Sigma}_{t}$ under $\Psi_{t,\gamma
}$ and $\sigma(ub_{t,\gamma})$ coincides with the support of the Brown measure
of $ub_{t,\gamma}.$
\end{theorem}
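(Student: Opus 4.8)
The plan is to mirror the proof of Theorem~\ref{ellipticEquality.thm} from the additive setting, with $\Psi_{t,\gamma}$ playing the role of $\Phi_{t,\gamma}$. First I would invoke the Ho--Zhong/Hall--Ho push-forward theorem stated above, which identifies $\mathrm{Br}_{ub_{t,\gamma}}$ as $(\Psi_{t,\gamma})_{\ast}(\mathrm{Br}_{ub_t})$, and combine it with Lemma~\ref{pushSupport.lem} and the continuity of $\Psi_{t,\gamma}$ on $\mathbb{C}$ (Lemma~\ref{injPsi.lem}) to conclude
\[
\mathrm{supp}(\mathrm{Br}_{ub_{t,\gamma}})=\Psi_{t,\gamma}\big(\mathrm{supp}(\mathrm{Br}_{ub_t})\big)=\Psi_{t,\gamma}(\overline{\Sigma}_t),
\]
where the last equality is Theorem~\ref{ubtSpec.thm}. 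Since $\mathrm{supp}(\mathrm{Br}_{ub_t})\subseteq\sigma(ub_t)$ and $0\notin\sigma(ub_t)$, we also note that $0\notin\overline{\Sigma}_t$, although this will not be essential.

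Next I would establish a multiplicative analogue of Lemma~\ref{PhiSurj.lem}: the map $\Psi_{t,\gamma}$ sends $\mathbb{C}$ onto $\mathbb{C}$. The key input is the behavior at infinity. From (\ref{HergDef}) and the fact that $G_{ub_t}(\lambda)\sim 1/\lambda$ as $\lambda\to\infty$, one gets $J_{ub_t}(\lambda)\to-\tfrac12$, so $\Psi_{t,\gamma}(\lambda)=\lambda\exp\{\gamma J_{ub_t}(\lambda)\}\approx e^{-\gamma/2}\lambda$ near infinity, a nonzero constant multiple of $\lambda$. Thus, for any $z\in\mathbb{C}$, the restriction of $\Psi_{t,\gamma}$ to a large circle centered at the origin has winding number $1$ around $z$, while simple connectivity of $\mathbb{C}$ forces the image under $\Psi_{t,\gamma}$ of any loop to be null-homotopic in $\mathbb{C}\setminus\{z\}$; this contradiction---exactly as in the proof of Lemma~\ref{PhiSurj.lem}---shows that $z$ lies in the image of $\Psi_{t,\gamma}$. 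The continuity needed for this topological argument is furnished by Lemma~\ref{injPsi.lem}.

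With these two facts in hand, the proof finishes as in Theorem~\ref{ellipticEquality.thm}. Let $z$ be any point outside $\Psi_{t,\gamma}(\overline{\Sigma}_t)$. If $z=0$, then $z\notin\sigma(ub_{t,\gamma})$ because $u$ and $b_{t,\gamma}$ are both invertible. If $z\neq 0$, surjectivity gives $z=\Psi_{t,\gamma}(\lambda)$ for some $\lambda\in\mathbb{C}$, and $\lambda$ cannot lie in $\overline{\Sigma}_t$ (otherwise $z\in\Psi_{t,\gamma}(\overline{\Sigma}_t)$); Proposition~\ref{multSpecPsi.prop} then yields $z=\Psi_{t,\gamma}(\lambda)\notin\sigma(ub_{t,\gamma})$. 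Hence $\sigma(ub_{t,\gamma})\subseteq\Psi_{t,\gamma}(\overline{\Sigma}_t)=\mathrm{supp}(\mathrm{Br}_{ub_{t,\gamma}})$, and since the Brown support of any element is contained in its spectrum, the reverse inclusion holds and equality follows.

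The heavy lifting has already been carried out in Proposition~\ref{multSpecPsi.prop} (the spectral exclusion via the Hamilton--Jacobi analysis of (\ref{multSPDE1})) and Lemma~\ref{injPsi.lem} (injectivity of $\Psi_{t,\gamma}$ off $\overline{\Sigma}_t$ and its identification with $f_\gamma$ there), so the remaining work is largely bookkeeping. I expect the only genuinely new step to be the surjectivity of $\Psi_{t,\gamma}$; the mild subtlety there is that $\Psi_{t,\gamma}$ is merely continuous---not holomorphic---on $\overline{\Sigma}_t$, so one must use the winding-number argument rather than an open-mapping argument, and one should be careful to treat the point $0$ separately, which is harmless since invertibility of $ub_{t,\gamma}$ removes $0$ from the spectrum outright.
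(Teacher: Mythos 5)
Your proposal is correct and follows exactly the paper's intended approach: the paper's proof of Theorem~\ref{multEquality.thm} simply says it is ``almost identical to proofs of Lemma~\ref{PhiSurj.lem} and Theorem~\ref{ellipticEquality.thm},'' and that is precisely what you carry out, including the one genuinely new ingredient---the multiplicative analogue of Lemma~\ref{PhiSurj.lem}, which you correctly support by computing $J_{ub_t}(\lambda)\to-\tfrac12$ so that $\Psi_{t,\gamma}(\lambda)\approx e^{-\gamma/2}\lambda$ at infinity, making the winding-number argument go through.
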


When $\gamma\neq t,$ the image of $\overline{\Sigma}_{t}$ under $\Psi
_{t,\gamma}$ is the set denoted $\overline{\Sigma}_{s,\tau}$ in
\cite[Definition 2.5]{HallHo2}, where $s$ and $\tau$ in \cite{HallHo2}
correspond to $t$ and $t-\gamma$, respectively, here. See also Figure 4 in
\cite{HallHo2}. When $\gamma=t$ (corresponding to $\tau=0$ in \cite{HallHo2}),
the image of $\overline{\Sigma}_{t}$ under $\Psi_{t,\gamma}$ is the support of
the law of the unitary element $uu_{t},$ where $u_{t}$ is Biane's free unitary
Brownian motion. In the $\gamma=t$ case, the restriction of $\Psi_{t,\gamma}$
to $\overline{\Sigma}_{t}$ is the map written in \cite[Corollary 4.30]{HZ} as%
\[
\lambda\mapsto\Phi_{t,\bar{\mu}}(r_{t}(\theta)e^{i\theta}),
\]
where $r_{t}(\theta)$ is the inner radius of $\overline{\Sigma}_{t}$ at angle
$\theta=\arg\lambda.$

\begin{proof}
[Proof of Theorem \ref{multEquality.thm}]The proof is almost identical to
proofs of Lemma \ref{PhiSurj.lem} and Theorem \ref{ellipticEquality.thm} in
the additive case.
\end{proof}

\subsection{The case of $xb_{t,\gamma}$\label{positive.sec}}

Demni and Hamdi \cite{DemniHamdi} considered an element of the form $pu_{t},$
where $u_{t}$ denotes Biane's free unitary Brownian motion and where $p$ is
nonzero self-adjoint projection, freely independent of $u_{t}.$ They showed
that the support of the Brown measure of $pu_{t}$ is contained in
$\{0\}\cup\overline{\Omega}_{t,\alpha}$ for a certain set $\Omega_{t,\alpha},$
which is bounded by a Jordan curve. Work of Eaknipitsari--Hall \cite{EH}
generalizes this result to elements of the form $xb_{t,\gamma},$ where $x$ is
a non-negative self-adjoint element, assumed to be nonzero and freely
independent of $b_{t,\gamma}.$ The case in which $\gamma=t$ (so that
$b_{t,\gamma}=u_{t}$) and $x$ is a projection corresponds to the results of
\cite{DemniHamdi}.

The PDEs (\ref{multSpde}) and (\ref{multSPDE1}) still hold for the regularized
log potentials of $xb_{t}$ and $xb_{t,\gamma},$ respectively; only the initial
conditions change. We then define two different momenta
\begin{align*}
\tilde{p}_{0}(\lambda)  &  =\mathrm{tr}[\left\vert x-\lambda\right\vert
^{-2}]\\
\tilde{p}_{2}(\lambda)  &  =\mathrm{tr}[\left\vert x\right\vert ^{2}\left\vert
x-\lambda\right\vert ^{-2}],
\end{align*}
where the tilde on $p$ indicates that we are computing with $\varepsilon
_{0}=0.$ Then $\tilde{p}_{0}$ is the initial value of the momentum
$p_{\varepsilon},$ in the limit as $\varepsilon_{0}\rightarrow0$; compare
(\ref{multPepsilon}) in the case of a unitary initial condition. Meanwhile
$\tilde{p}_{2}$ is another similar function that arises in various
computations. We then consider a function $T$ defined as
\[
T(\lambda)=\frac{\log\left(  \frac{\left\vert \lambda\right\vert ^{2}\tilde
{p}_{0}(\lambda)}{\tilde{p}_{2}(\lambda)}\right)  }{\left\vert \lambda
\right\vert ^{2}\tilde{p}_{0}(\lambda)-\tilde{p}_{2}(\lambda)},
\]
with a limiting value of $1/\tilde{p}_{2}(\lambda)$ when $\left\vert
\lambda\right\vert ^{2}\tilde{p}_{0}(\lambda)=\tilde{p}_{2}(\lambda).$ This
function describes the lifetime of Hamiltonian system associated to the PDE
(\ref{multSpde}), with initial condition given by the regularized log
potential of the non-negative element $x,$ in the limit as $\varepsilon
_{0}\rightarrow0.$ See \cite[Definition 3.6 and Proposition 3.7]{EH}.

We note that if $x$ were unitary, then $\left\vert x\right\vert ^{2}$ would
equal $1$ and $\tilde{p}_{2}$ would simply equal $\tilde{p}_{0}.$ In that
case, the formula for $T$ would simplify to
\[
T(\lambda)=\frac{1}{\tilde{p}_{2}(\lambda)}\frac{\log(\left\vert
\lambda\right\vert ^{2})}{\left\vert \lambda\right\vert ^{2}-1},\quad(\text{if
}\left\vert x\right\vert ^{2}=1),
\]
which is just the formula for $T(\lambda)$ in the unitary case. This
observation suggests (correctly!) that the case of a positive initial
condition is much harder to analyze than the case of a unitary initial condition.

The function $T(\lambda)$ is initially defined when $\tilde{p}_{0}(\lambda)$
and $\tilde{p}_{2}(\lambda)$ are finite, which holds outside the spectrum of
$x$ and, thus, outside $[0,\infty)$. By the proof of Proposition 3.10(ii) in
\cite{EH}, if $\tilde{p}_{0}(r)=\infty$ for some $r\in(0,\infty),$ then%
\[
\lim_{\theta\rightarrow0}T(re^{i\theta})=0.
\]
We can, therefore, extend $T(\lambda)$ to be defined for all nonzero complex
numbers $\lambda,$ with $T(\lambda)=0$ whenever $\tilde{p}_{0}(\lambda
)=\infty.$

We then define a set $\Sigma_{t},$ similarly to the previous examples in this
paper, as%
\[
\Sigma_{t}=\left\{  \left.  \lambda\neq0\in\mathbb{C}\right\vert
T(\lambda)<t\right\}  .
\]
Note that, by definition, $0$ is not in $\Sigma_{t};$ the origin will always
be analyzed as a special case.

We now record what is known about the Brown support of $xb_{t}$ and
$xb_{t,\gamma}.$

\begin{itemize}
\item Theorem 3.21 in \cite{EH} asserts that the support of the Brown measure
of $xb_{t}$ is contained in $\{0\}\cup\overline{\Sigma}_{t}.$ Furthermore, if
$0$ is outside $\overline{\Sigma}_{t},$ then $0$ is in the support of the
Brown measure of $xb_{t}$ if and only if $\mu_{x}(\{0\})>0.$ On the other
hand, since \cite{EH} does not compute the Brown measure itself, it is not
known whether the Brown support of $xb_{t}$ fills up all of $\overline{\Sigma
}_{t}.$

\item Proposition 4.12 in \cite{EH} asserts that the support of the Brown
measure of $xb_{t,\gamma}$ is contained in $\{0\}\cup D_{t,\gamma}$ for a
certain closed set $D_{t,\gamma}$ defined in (\ref{DtDef}) below, but it is
not known whether the Brown support fills up all of $D_{t,\gamma}.$
\end{itemize}

We then consider the extent to which Lemmas \ref{multUpper.lem},
\ref{multSpecContain.lem}, and \ref{multTgreater.lem} hold in this setting.

\begin{itemize}
\item Although the function $T$ is not known to be upper semicontinuous, the
set $\Sigma_{t}$ is open \cite[Proposition 3.17]{EH}.

\item According to Corollary 3.13 of \cite{EH}, $\sigma(x)\setminus\{0\}$ is
contained in $\overline{\Sigma}_{t}.$

\item If $\lambda$ is a nonzero complex number outside of $\overline{\Sigma
}_{t}$ and $\lambda$ is outside $(0,\infty),$ then $T(\lambda)>t$. (See the
last part of Proposition 3.19 in \cite{EH}.) However, we cannot exclude the
possibility that there is some $\lambda$ outside $\overline{\Sigma}_{t}$ but
in $(0,\infty)$ with $T(\lambda)=t.$ (Numerically, it appears that such points
$\lambda$ do not exist.)
\end{itemize}

Since the precise Brown support of $xb_{t}$ or $xb_{t,\gamma}$ is not known,
it is not possible to prove that the spectrum and the Brown support coincide.
Furthermore, because of the possibility of points outside $\overline{\Sigma
}_{t}$ with $T(\lambda)=t,$ we cannot exclude the possibility of spectrum
outside $\{0\}\cup\overline{\Sigma}_{t}$ or $\{0\}\cup D_{t,\gamma}.$ We now
state the results we are able to obtain

\begin{theorem}
\label{xbtSpec.thm}If $\lambda$ is a nonzero complex number outside
$\overline{\Sigma}_{t}$ and $T(\lambda)>t,$ then $\lambda$ is outside the
spectrum of $xb_{t}.$ Furthermore, if $0$ is outside $\overline{\Sigma}_{t},$
then $0$ is in the spectrum of $xb_{t}$ if and only if $\mu_{x}(\{0\})>0.$
\end{theorem}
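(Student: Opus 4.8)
The argument parallels the proof of Theorem~\ref{ubtSpec.thm}, with the three lemmas used there replaced by the facts recalled above from \cite{EH} and with the hypothesis $T(\lambda)>t$ now supplied directly rather than derived; note that $T(\lambda)>t$ holds automatically when $\lambda\notin(0,\infty)$ by \cite[Proposition 3.19]{EH} but not necessarily when $\lambda\in(0,\infty)$. So fix a nonzero $\lambda$ outside $\overline{\Sigma}_t$ with $T(\lambda)>t$. Since $\lambda\ne 0$ and $\sigma(x)\setminus\{0\}\subset\overline{\Sigma}_t$ by \cite[Corollary 3.13]{EH}, the point $\lambda$ lies outside $\sigma(x)$, so $|x-\lambda|^{2}$ is invertible. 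Consequently the initial momenta of the Hamiltonian system for the PDE~(\ref{multSpde}), with initial condition $S(0,\lambda,\varepsilon)=\mathrm{tr}[\log(|x-\lambda|^{2}+\varepsilon)]$, are well defined and depend analytically on $(\lambda_0,\varepsilon_0)$ in a neighborhood of $(\lambda,0)$, even for $\varepsilon_0$ slightly negative.

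By \cite[Proposition 3.7]{EH} the lifetime of this system at $\varepsilon_0=0$ equals $T(\lambda)>t$. As in the proof of Theorem~\ref{ubtSpec.thm}, the openness of the domain of a flow (the sets $M_t$ of \cite[Theorem 9.12]{Lee} being open) shows that the solution survives up to time $t$ for all $(\lambda_0,\varepsilon_0)$ near $(\lambda,0)$, including values with $\varepsilon_0$ slightly negative; the caveat in the footnote to the proof of Theorem~\ref{ubtSpec.thm}, that the explicit lifetime formula of \cite{EH} is valid only for $\varepsilon_0\ge 0$, applies here as well, and it is exactly this flow-domain openness that lets us bypass it. We thus obtain an analytic map $U_t(\lambda_0,\varepsilon_0)=(\lambda(t),\varepsilon(t))$ defined near $(\lambda,0)$. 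Adapting the computation in the proof of \cite[Lemma 6.3]{DHK}, which uses only the PDE~(\ref{multSpde}) and not the particular initial data, one checks that the Jacobian of $U_t$ at $(\lambda,0)$ is invertible, so $U_t$ has a real-analytic inverse near $U_t(\lambda,0)$. Composing the second Hamilton--Jacobi formula $\partial S/\partial\varepsilon(t,\lambda(t),\varepsilon(t))=p_\varepsilon(t;\lambda_0,\varepsilon_0)$ with that inverse gives a function analytic in $\varepsilon$ near $0$ that equals $\partial S/\partial\varepsilon(t,\lambda,\varepsilon)$ for $\varepsilon>0$; by Theorem~\ref{main.thm}, $\lambda$ lies outside the spectrum of $xb_t$.

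For the origin, recall that $b_t$ is invertible \cite[p.\ 265]{BianeJFA}, so $xb_t$ is invertible precisely when $x$ is; hence $0\in\sigma(xb_t)$ if and only if $0\in\sigma(x)=\mathrm{supp}(\mu_x)$, using Proposition~\ref{normalSupport.prop}. If $\mu_x(\{0\})>0$ then $0$ is an atom of $\mu_x$, hence lies in $\mathrm{supp}(\mu_x)$, so $0\in\sigma(xb_t)$. Conversely, assume $0\notin\overline{\Sigma}_t$ and $\mu_x(\{0\})=0$, and suppose toward a contradiction that $0\in\sigma(x)$. If $0$ were isolated in $\sigma(x)$, the spectral projection $\nu_x(\{0\})$ would be nonzero, whence $\mu_x(\{0\})=\mathrm{tr}[\nu_x(\{0\})]>0$ by faithfulness, contrary to hypothesis. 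So $0$ is a non-isolated point of $\sigma(x)$, hence a limit point of $\sigma(x)\setminus\{0\}\subset\overline{\Sigma}_t$, which forces $0\in\overline{\Sigma}_t$, again a contradiction. Therefore $0\notin\sigma(x)$, so $0\notin\sigma(xb_t)$.

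The main obstacle is the invertibility of the Jacobian of $U_t$ at $(\lambda,0)$, together with the already-noted delicacy that the characteristic formulas of \cite{EH} are derived for $\varepsilon_0\ge 0$, so that extending the analysis to slightly negative $\varepsilon_0$ must be justified through the general theory of flows rather than by the formulas themselves. Checking that the Jacobian argument of \cite[Lemma 6.3]{DHK} goes through with the present initial condition---the regularized log potential of the non-negative $x$ in place of that of a unitary---is where the bulk of the remaining work lies, although it should be a routine adaptation.
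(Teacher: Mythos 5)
Your proof is correct and follows essentially the same route as the paper: for nonzero $\lambda$ you rerun the Hamilton--Jacobi/flow-domain-openness argument from Theorem~\ref{ubtSpec.thm} with the lemmas replaced by the cited facts from \cite{EH}, exactly as the paper does. The only genuine difference is in the ``if'' direction for the origin: where the paper invokes \cite[Theorem 3.21]{EH} (that $0$ is in the Brown support of $xb_t$ when $\mu_x(\{0\})>0$), you instead reduce to the invertibility of $x$ via the invertibility of $b_t$, which is more elementary and bypasses the Brown-measure result entirely; both are valid. Your spectral-projection/isolated-point case analysis for the ``only if'' direction is a clean way of extracting what the paper attributes to \cite[Corollary 3.13]{EH}, and your cautious remarks about the Jacobian computation and the restricted validity of the explicit lifetime formula match the paper's footnoted concern.
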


\begin{proof}
Assume $0$ is outside $\overline{\Sigma}_{t}.$ If $\mu_{x}(\{0\})>0,$ then $0$
is in the Brown support of $xb_{t}$ by \cite[Theorem 3.21]{EH} and, therefore,
$0$ is in the spectrum of $xb_{t}.$ On the other hand, if $\mu_{x}(\{0\})=0$
(and $0$ is outside $\overline{\Sigma}_{t}$) then by \cite[Corollary 3.13]%
{EH}, 0 is outside the spectrum of $x.$ Thus, $x$ is invertible. Since, also,
$b_{t}$ is invertible \cite[p. 265]{BianeJFA}, $xb_{t}$ is invertible so that
$0$ is outside the spectrum of $xb_{t}.$

We then consider a nonzero $\lambda$ outside $\overline{\Sigma}_{t}$ and we
\textit{assume} $T(\lambda)>0.$ Then the proof of Theorem \ref{ubtSpec.thm}
applies without change and we conclude that $\lambda$ is outside the spectrum
of $xb_{t}.$
\end{proof}

\begin{figure}[ptb]
\centering
\includegraphics[scale=0.6]{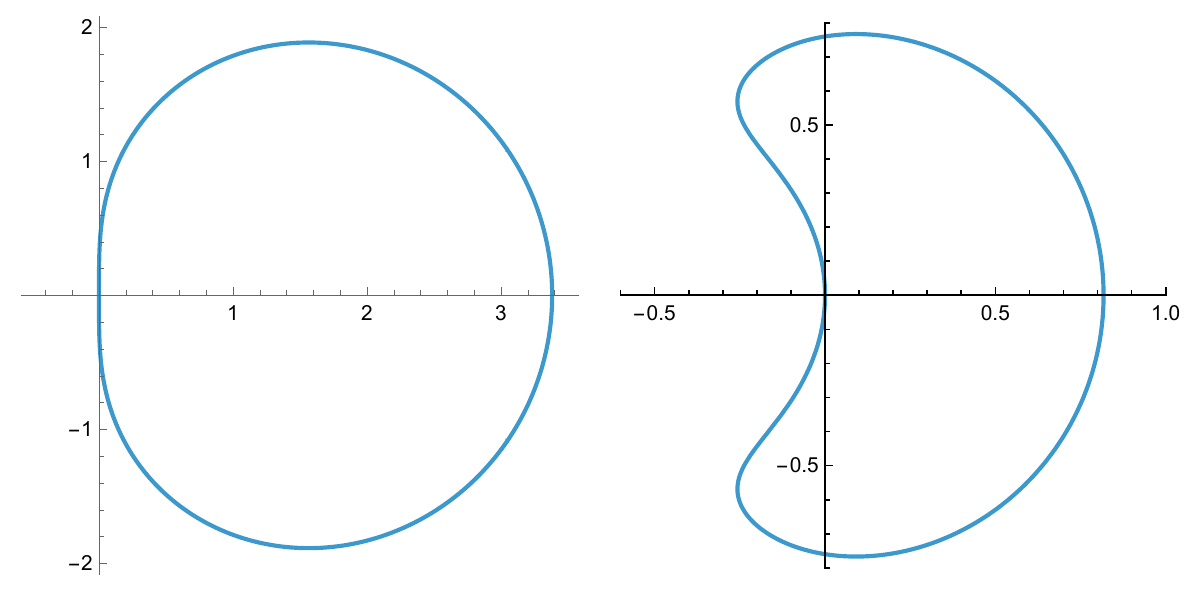}
\caption{The domains $\Sigma_{t}$ (left) and $D_{t,\gamma}$ (right) for
$t=\gamma=2.$}
\label{xbtregions.fig}
\end{figure}

We then define a holomorphic function $f_{\gamma}$ on the
complement of $\overline{\Sigma}_{t}$ by%
\[
f_{\gamma}(\lambda)=\lambda\exp\left\{  \gamma J_{x}(\lambda)\right\}  ,
\]
where $J_{x}$ is the Herglotz transform of $x$ as in (\ref{HergDef}). Outside
$\overline{\Sigma}_{t},$ the function $f_{\gamma}$ plays the role of the map
$\Psi_{t,\gamma}$ from the case of a unitary initial condition. (Compare Lemma
\ref{injPsi.lem}.) According to \cite[Proposition 4.14]{EH}, $f_{\gamma}$ is
injective on $(\overline{\Sigma}_{t})^{c}$ and $f_{\gamma}(\lambda)$ tends to
infinity as $\lambda$ tends to infinity. The closed set $D_{t,\gamma}$ in
\cite[Proposition 4.12]{EH} is then defined by the condition%
\begin{equation}
(D_{t,\gamma})^{c}=f_{\gamma}((\overline{\Sigma}_{t})^{c}). \label{DtDef}%
\end{equation}
See Figure \ref{xbtregions.fig}.

\begin{theorem}
\label{specxbtgamma.thm}Let $z$ be a nonzero complex number outside
$D_{t,\gamma}$ and let $\lambda$ be the complex number outside $\overline
{\Sigma}_{t}$ such that $f_{\gamma}(\lambda)=z.$ If $T(\lambda)>t,$ then $z$
is outside the spectrum of $xb_{t,\gamma}.$ Furthermore, if $0$ is outside
$\overline{\Sigma}_{t},$ then the point $0=f_{\gamma}(0)$ is in the spectrum
of $xb_{t,}\gamma$ if and only if $\mu_{x}(\{0\})>0.$
\end{theorem}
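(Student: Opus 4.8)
The plan is to follow the proof of Proposition \ref{multSpecPsi.prop} almost verbatim, with Theorem \ref{xbtSpec.thm} (the case $\gamma=0$) replacing Theorem \ref{ubtSpec.thm} and the map $f_{\gamma}$ replacing $\Psi_{t,\gamma}$, while being careful that the hypothesis $T(\lambda)>t$ is now imposed only at the single relevant point. First I would dispose of the origin; here $f_{\gamma}(0)=0$, as recorded in the statement. If $\mu_{x}(\{0\})>0$, let $q\neq 0$ be the spectral projection of $x$ onto $\{0\}$; since $b_{t,\gamma}$ is invertible (as is $b_{t}$; see \cite{HallHo2}), the element $b_{t,\gamma}^{-1}q$ is nonzero and $(xb_{t,\gamma})(b_{t,\gamma}^{-1}q)=xq=0$, so $xb_{t,\gamma}$ is a zero divisor and $0\in\sigma(xb_{t,\gamma})$. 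Conversely, if $\mu_{x}(\{0\})=0$ and $0$ is outside $\overline{\Sigma}_{t}$, then \cite[Corollary 3.13]{EH}, exactly as used in the proof of Theorem \ref{xbtSpec.thm}, gives $0\notin\sigma(x)$; hence $x$, and therefore $xb_{t,\gamma}$, is invertible and $0\notin\sigma(xb_{t,\gamma})$.

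For the main assertion, fix a nonzero $z$ outside $D_{t,\gamma}$. By (\ref{DtDef}) there is a $\lambda\in(\overline{\Sigma}_{t})^{c}$ with $f_{\gamma}(\lambda)=z$, and this $\lambda$ is unique because $f_{\gamma}$ is injective on $(\overline{\Sigma}_{t})^{c}$ by \cite[Proposition 4.14]{EH}; moreover $\lambda\neq 0$ since $f_{\gamma}(0)=0\neq z$. Using the hypothesis $T(\lambda)>t$, Theorem \ref{xbtSpec.thm} gives $\lambda\notin\sigma(xb_{t})$. Because $\sigma(xb_{t})$ and $\overline{\Sigma}_{t}$ are closed, there is a neighborhood $V$ of $\lambda$ with $V\subset(\sigma(xb_{t}))^{c}\cap(\overline{\Sigma}_{t})^{c}$. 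For $\lambda_{0}\in V$ the operator $\left\vert xb_{t}-\lambda_{0}\right\vert^{2}$ is invertible, so the initial momenta $p_{\lambda}(0)$ and $p_{\varepsilon}(0)$ of the Hamiltonian system attached to the PDE (\ref{multSPDE1})---taken with $\gamma=0$ as the initial condition and computed from the regularized log potential of $xb_{t}$, as in the Hamilton--Jacobi analysis of \cite{EH}---are finite and depend analytically on $(\lambda_{0},\varepsilon_{0})$ even for $\varepsilon_{0}$ slightly negative.

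From here I would run the characteristics as in Proposition \ref{multSpecPsi.prop}: the analysis of \cite{HallHo2} and \cite{EH} produces curves $\lambda(\gamma)$, $\varepsilon(\gamma)$, $p_{\varepsilon}(\gamma)$ with $\varepsilon(\gamma)=0$ and $\lambda(\gamma)=f_{\gamma}(\lambda_{0})$ when $\varepsilon_{0}=0$, and the map $F(\lambda_{0},\varepsilon_{0})=(\lambda(\gamma),\varepsilon(\gamma))$ has, at $\varepsilon_{0}=0$, a block-triangular Jacobian whose upper-left block is $(f_{\gamma})_{\ast}$ and whose lower-right entry equals $1$. Since $f_{\gamma}$ is holomorphic and injective on $(\overline{\Sigma}_{t})^{c}$, its complex derivative never vanishes, so $(f_{\gamma})_{\ast}$ is invertible and $F$ has a real-analytic inverse near $(z,0)=(f_{\gamma}(\lambda),0)$. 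Composing that inverse with the second Hamilton--Jacobi formula $\frac{\partial S}{\partial\varepsilon}(t,\gamma,\lambda(\gamma),\varepsilon(\gamma))=p_{\varepsilon}(\gamma)$ yields a real-analytic extension of $\varepsilon\mapsto\frac{\partial S}{\partial\varepsilon}(t,\gamma,z,\varepsilon)$ to a neighborhood of $\varepsilon=0$, so Theorem \ref{main.thm} applies and $z$ is outside the spectrum of $xb_{t,\gamma}$.

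The main obstacle, compared with the unitary case, is the lack of upper semicontinuity of $T$ for a positive initial condition: I cannot upgrade $T(\lambda)>t$ to $T>t$ on a full neighborhood, so the neighborhood $V$ on which the initial momenta are analytic has to be extracted purely from the closedness of $\sigma(xb_{t})$---which is precisely why the theorem hypothesizes $T(\lambda)>t$ only at the single point $\lambda$, and why the conclusion is only that \emph{most} points outside $\{0\}\cup D_{t,\gamma}$ escape the spectrum. A secondary technical point, as in the footnote to the proof of Theorem \ref{ubtSpec.thm}, is to check that the $\gamma$-flow survives up to the target value of $\gamma$ when $\varepsilon_{0}<0$; this follows from the openness of the domain of a flow together with the fact that at $\varepsilon_{0}=0$ the solution reaches the target $\gamma$ because $f_{\gamma}$ is defined on all of $(\overline{\Sigma}_{t})^{c}$.
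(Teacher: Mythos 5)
Your argument is correct and mirrors the paper's proof, which is compressed into a single sentence pointing back to Theorem \ref{xbtSpec.thm} and Proposition \ref{multSpecPsi.prop}. You fill in the details of that reduction accurately: using $T(\lambda)>t$ together with Theorem \ref{xbtSpec.thm} to get $\lambda\notin\sigma(xb_{t})$, then extracting a neighborhood $V$ from the closedness of $\sigma(xb_{t})$ and $\overline{\Sigma}_{t}$ (correctly compensating for the missing upper semicontinuity of $T$ in the positive case), and then running the Hamilton--Jacobi characteristics exactly as in Proposition \ref{multSpecPsi.prop} with $f_{\gamma}$ in place of $\Psi_{t,\gamma}$ and the injectivity from \cite[Proposition 4.14]{EH}.

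The one genuine departure is your treatment of the origin. The paper handles the implication $\mu_{x}(\{0\})>0\Rightarrow 0\in\sigma(xb_{t,\gamma})$ by first placing $0$ in the Brown support via \cite[Proposition 4.13]{EH} and then using $\mathrm{supp}(\mathrm{Br})\subset\sigma$. You instead give a direct, elementary zero-divisor argument: with $q$ the spectral projection of $x$ onto $\{0\}$, the nonzero element $b_{t,\gamma}^{-1}q$ satisfies $(xb_{t,\gamma})(b_{t,\gamma}^{-1}q)=xq=0$, so $xb_{t,\gamma}$ is not invertible. This is more self-contained (it avoids invoking the Brown-measure computation of \cite{EH}), at the cost of needing invertibility of $b_{t,\gamma}$ for general $\gamma$, which you assert with a citation to \cite{HallHo2}; that is a reasonable appeal but worth stating precisely since the paper only explicitly cites invertibility of $b_{t}$ from \cite[p.~265]{BianeJFA}. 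The converse direction (via \cite[Corollary 3.13]{EH}) matches the paper. Overall your proposal is a correct, slightly more explicit rendering of the paper's argument, with a cleaner proof for one direction of the origin case.
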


\begin{proof}
The analysis of 0 is similar to the proof of Theorem \ref{xbtSpec.thm}, using
\cite[Proposition 4.13]{EH} in place of \cite[Theorem 3.21]{EH}. If $\lambda$
is a nonzero point outside $\overline{\Sigma}_{t}$ and we assume
$T(\lambda)>0,$ then Theorem \ref{xbtSpec.thm} tells us that $\lambda$ is
outside the spectrum of $xb_{t}.$ Then the proof of Proposition
\ref{multSpecPsi.prop} applies, using the injectivity of $f_{\gamma}$ obtained
in \cite[Proposition 4.14]{EH}, showing that $f_{\gamma}(\lambda)$ is outside
the spectrum of $xb_{t,\gamma}.$
\end{proof}

\subsection*{Acknowledgments:}

We thank Jorge Garza-Vargas and Benson Au for useful discussions on a related
problem that motivated this line of research. We thank Hari Bercovici for
supplying the proof of Lemma \ref{invertibility.lem} and we thank Ping Zhong
for several helpful comments. We also thank the referees for a careful reading of the paper and offering useful suggestions and corrections.

\subsection*{Funding Information:}

Hall's research is supported in part by a grant from the Simons Foundation. Ho's research is supported in part by the NSTC grant 114-2115-M-001-005-MY3.

\end{document}